\numberwithin{equation}{section}
\newtheorem{thm}{Theorem}[section]
\newtheorem*{thm*}{Theorem}
\newtheorem{cor}[thm]{Corollary}
\newtheorem{lem}[thm]{Lemma}
\newtheorem{prop}[thm]{Proposition}
\theoremstyle{definition}
\newtheorem{defin}[thm]{Definition}
\newtheorem{rem}[thm]{Remark}
\newtheorem{Question}[thm]{Question}
\def\C{{\bf C}}
\def\N{{\bf N}}
\def\R{{\bf R}}
\def\V{{\bf V}}
\def\W{{\bf W}}
\newcommand{\cV}{{\mathcal V}}
\newcommand{\cS}{{\mathcal S}}
\newcommand{\BR}{{\bf R}}
\newcommand{\lieg}{\mathfrak{g}}
\newcommand{\lieh}{\mathfrak{h}}
\def\g{{\mathfrak g}}
\def\h{{\mathfrak h}}
\def\p{{\mathfrak p}}
\def\q{{\mathfrak q}}
\def\a{{\mathfrak a}}
\def\b{{\mathfrak b}}
\def\n{{\mathfrak n}}
\def\lieo{{\mathfrak o}}
\newcommand{\so}{\mathfrak{so}}
\newcommand{\Aut}{\operatorname{Aut}}
\newcommand{\Ad}{\operatorname{Ad}}
\newcommand{\ad}{\operatorname{ad}}
\newcommand{\GL}{{\operatorname{GL}}}
\newcommand{\SU}{{\rm SU}}
\newcommand{\SO}{{\rm SO}}
\begin{document}

\allowdisplaybreaks

\newcommand{\arXivNumber}{2505.14665}

\renewcommand{\PaperNumber}{061}

\FirstPageHeading

\ShortArticleName{Tractor Embedding Theorem with Conformal Application}

\ArticleName{An Embedding Theorem for Tractor Bundles,\\ and an Application in Conformal Pseudo-Riemannian\\ Geometry}

\Author{Karin MELNICK~$^{\rm a}$ and Katharina NEUSSER~$^{\rm b}$}

\AuthorNameForHeading{K.~Melnick and K.~Neusser}

\Address{$^{\rm a)}$~Department of Mathematics, University of Luxembourg,\\
\hphantom{$^{\rm a)}$}~6 avenue de la Fonte, L-4364 Esch-sur-Alzette, Luxembourg}
\EmailD{\mail{karin.melnick@uni.lu}}
\URLaddressD{\url{https://math.uni.lu/~melnick/}}

\Address{$^{\rm b)}$~Department of Mathematics and Statistics, Masaryk University,\\
\hphantom{$^{\rm b)}$}~Kotla\v rsk\'a 267/2, 611 37 Brno, Czech Republic}
\EmailD{\mail{neusser@math.muni.cz}}
\URLaddressD{\url{https://is.muni.cz/www/neusser/}}

\ArticleDates{Received October 14, 2025, in final form June 04, 2026; Published online June 22, 2026}

\Abstract{We provide an extension of the Gromov--Zimmer embedding theorem for Cartan geometries of [Bader U., Frances C., Melnick K., \textit{Geom. Funct. Anal.} \textbf{19} (2009), 333--355, arXiv:0709.3844] to tractor bundles carrying any invariant connection, including tractor connections and prolongation connections of first BGG operators for parabolic geometries. As an application, we prove a rigidity result for conformal actions of special pseudo-unitary groups on closed, simply connected, analytic pseudo-Riemannian manifolds.}

\Keywords{Cartan geometries; parabolic geometries; conformal manifolds; automorphisms of geometric structures; pseudo-Riemannian geometry; simple Lie transformation groups}

\Classification{53C15; 53C18; 53C24; 22F50; 57S20}

\section{Introduction}
A Cartan geometry is a manifold infinitesimally modeled on a
homogeneous space $G/P$, where $G$ is a Lie group and $P$ is a closed
subgroup. It comprises a principal $P$-bundle over the manifold,
equipped with a \emph{Cartan connection} taking values in $\lieg$; see Definition~\ref{defin.cartan.geom} below. Essentially all classical
rigid differential-geometric structures correspond canonically to
Cartan geometries for a suitable homogeneous model, in such a way that the
automorphisms of the structure are the same as the automorphisms of
the Cartan geometry.

Our results concern Cartan geometries, particularly parabolic geometries, having large
automorphism group. This property typically implies further special properties of the geometry, for example,
the existence of solutions to natural overdetermined geometric PDEs. Conversely,
when there are many solutions to such PDEs, one may expect a large automorphism group. Either property implies restrictions on the curvature. The results of this paper are intended to contribute to the concrete understanding of this interplay.

\subsection{Parabolic geometries and tractor bundles}
\label{sec.intro.tractor}

A \emph{parabolic geometry} is a Cartan geometry infinitesimally modeled
on a homogeneous space~$G/P$, where $G$ is a semisimple Lie group and
$P < G$ a parabolic subgroup. This rich family of geometries includes
conformal and projective structures, along with many others. In general, a parabolic
geometry on a manifold $M$ corresponds to a filtration of the tangent
bundle, $TM$, with some additional geometric structures on the
subquotients of the filtration.

Tractor bundles are vector bundles {$\mathcal{V} = \hat{M} \times_\rho {\bf V}$} associated to the Cartan bundle
$\hat{M}$, for $G$-representations $\rho$ restricted to $P$, and equipped
with certain geometrically significant, automorphism-invariant connections.
On the one hand, any tractor bundle is equipped with the tractor connection $\nabla^\omega$, induced by the Cartan connection $\omega$,
and on the other hand with the prolongation connection $\nabla^{\mathcal V}$ of the \emph{first BGG-operator} corresponding to $\mathcal V$, which
in general differs from $\nabla^\omega$ by curvature terms; see Section~\ref{sec_BGG} for details. Parallel sections for these connections correspond to solutions of overdetermined
geometric PDEs
on the manifold; they form a finite-dimensional vector space.

The conformal class of a metric of signature $(p,q)$ on a manifold
$M$, with $p+q = \mbox{dim } M \geq 3$, corresponds to a Cartan geometry
modeled on $G/P$ for $G = \mbox{PO}(p+1,q+1)$ and $P$ the maximal
parabolic subgroup stabilizing an isotropic line in $\BR^{p+1,q+1}$.
The homogeneous space $G/P$ is called the \emph{M\"obius space} of
type $(p,q)$, denoted $\mbox{\bf M\"ob}^{p,q}$, and is diffeomorphic to a two-fold quotient of ${\bf S}^p \times {\bf S}^q$. It carries a
conformally $G$-invariant, conformally flat metric of signature~$(p,q)$. The Cartan
bundle is a principal $P$-subbundle of the second-order frame bundle
of~$M$. The~Cartan connection $\omega$ has been a valuable tool
in recent progress on rigidity of conformal transformations of
semi-Riemannian manifolds (see, for example, \cite{me.notices} for an
overview). Vanishing of the Cartan curvature of this connection over an
open subset $U \subseteq M$ is equivalent to conformal flatness of the
metric on $U$.

A semi-Riemannian metric $g$ on $M^n$, $n \geq 3$, is \emph{Einstein}
if \smash{$\operatorname{Ric} g = \frac{S}{2} g$}, where $\operatorname{Ric}$ is the Ricci
curvature tensor and $S$ is the scalar curvature.
It is \emph{conformally almost-Einstein} if there exists a
smooth function $\sigma$, not constant $0$, and a smooth function $\nu$ such that
\begin{equation}
\label{eqn.conf.einstein}
 \nabla^2 \sigma + \sigma \cdot P = \nu \cdot g,
 \end{equation}
 where $\nabla^2$ is the Hessian, and
 \[
 P = \frac{1}{n-2} \left( \operatorname{Ric} - \frac{S}{2(n-1)} \cdot g \right)
 \]
is the \emph{Schouten tensor}; in other words, $\nabla^2 \sigma +
\sigma \cdot P$ is pure trace. When $\sigma$ is nonvanishing,
then~${\sigma^{-2} \cdot g}$ is an Einstein metric in the conformal class of $g$
(see, e.g., \cite[Proposition~3.6]{curry.gover.tractors}).

Suppose $\hat{M} \rightarrow M$ is the Cartan bundle of a conformal
semi-Riemannian structure. Let ${\bf V} = \BR^{p+1,q+1}$
and $\rho$ the standard representation of ${\rm O}(p+1,q+1)$. The
\emph{(standard) conformal tractor bundle} of such a structure is the associated
vector bundle $\mathcal{V} = \hat{M} \times_\rho {\bf V}$ and it is equipped with the $\omega$-induced tractor connection $\nabla^{\omega}$.
Sections of
$\mathcal{V}$ yield, via projection to a certain line bundle
and a choice of metric in the conformal class,
functions on $M$; functions coming from parallel sections for
$\nabla^{\mathcal{\omega}}$ correspond to
solutions of the conformal almost-Einstein equation. In~this case, the
tractor connection equals the prolongation connection $\nabla^{\mathcal V}$ of the first BGG operator.
See Section~\ref{sec.conf.tractor} below, and the references \cite{cap.gover.tractor.calc, curry.gover.tractors} for more details.

 There are many other significant geometric PDEs on conformal manifolds, and more generally on manifolds equipped with parabolic geometries, whose solutions correspond under prolongation
 to parallel sections of the prolongation connection of a first BGG-operator; on conformal manifolds this applies for instance also to the equations for conformal Killing forms \cite{GoverSilhan, Semmelmann} and for twistor spinors \cite{BFGK}, and for projective and
 c-projective structures to the equation controlling the metrizability of these structures \cite{cemn.cproj, eastwood.matveev.proj.tractor}, among many others.

Infinitesimal automorphisms of geometric structures underlying Cartan geometries also belong to the framework of parallel
sections of natural connections on tractor bundles. Let $\bigl(M, \hat{M}, \omega\bigr)$ be any Cartan geometry modeled on
$(\lieg,P)$. The \emph{adjoint tractor bundle} is the associated vector
bundle $\mathcal{A}$ for ${\bf V} = \lieg$ with the adjoint
representation. A section of $\mathcal{A}$ gives rise via the Cartan
connection to a vector field on $\hat{M}$ which is invariant under the
right-$P$-action, and thus descends to a vector field on $M$.
\v{C}ap observed that for a certain connection on~$\mathcal{A}$,
again related to an intrinsically defined differential operator on $M$, parallel sections correspond to
infinitesimal automorphisms of the Cartan geometry -- that is, vector
fields for which the flow preserves the Cartan connection~\cite{cap.deformations}. These
descend to vector fields on $M$ for which the flow preserves the
geometric structure encoded by the Cartan geometry, called
\emph{infinitesimal automorphisms} or \emph{Killing fields} of the structure.

This article concerns a
generalization of the Gromov--Zimmer embedding theorem, which originally pertains to
infinitesimal automorphisms, to general parallel sections for automorphism-invariant connections of tractor bundles.
We briefly recall the embedding theorem for infinitesimal automorphisms.

\subsection{Gromov--Zimmer embedding theorem}
\label{sec.intro.embedding}

Zimmer's embedding theorem first appeared in \cite{zimmer.lorentz} in the context of a noncompact simple Lie group $H$ acting by automorphisms of a $G$-structure $\omega$ of algebraic type on a compact manifold~$M^n$, such that $G$ also defines a volume form. In this context, the theorem gives a linear injection $\iota\colon \lieh \rightarrow \BR^n$ and an algebraic group $\check{H} < G$ locally isomorphic to $H$ such that $\iota$ intertwines the adjoint representation ${\rm Ad}_\lieh H$ with the representation of $\check{H}$ on $\iota(\lieh) \subseteq \BR^n$; in particular,~$H$~locally embeds in the structure group $G$. Zimmer provided an impressive application for~$\omega$ a~Lorentzian metric, proving that $H$ must be locally isomorphic to ${\rm SL}(2,\BR)$.

Gromov subsequently gave a very general version of the embedding theorem in \cite[Theorem~5.2.A]{gromov.rgs}, which applies to a Lie algebra $\lieh$ of infinitesimal automorphisms of a Gromov-rigid geometric structure $\omega$ of algebraic type on a compact manifold $M$, together with a group $B \leq \Aut(M,\omega)$ preserving a finite measure $\mu$ on $M$ and normalizing $\lieh$. Combined with Gromov's Frobenius theorem \cite[Theorem~1.3.A]{gromov.rgs}, the embedding theorem led to the Gromov--Zimmer centralizer and representation theorems (see \cite{zimmer.gromovrep}).

Our version of the embedding theorem will be a direct generalization of the following version proved by Bader, Frances, and the first author in \cite[Theorems~1.2 and 4.1]{bfm.zimemb} for Cartan geometries.

\begin{thm}
Let $\bigl(M, \hat{M}, \omega\bigr)$ be a Cartan geometry modeled on $(\lieg,P)$ and assume that ${\rm Ad}_{\lieg} P<\operatorname{Aut} \lieg$ is Zariski closed. Let $H\leq\operatorname{Aut}(M,\omega)$ be a Lie subgroup. Let $B \leq H$ preserve a probability measure $\mu$ on $M$. Then for $\mu$-almost-every $x\in M$, for all $\hat x\in\hat M_x$, there exists an algebraic subgroup $\check B<P$ and an algebraic epimorphism
$R\colon \check{B} \rightarrow \bar{B}_d$ such that
\[ (\Ad g) (\iota_{\hat{x}} (X)) = \iota_{\hat{x}} ( R(g). X ) \qquad \forall X \in \mathcal{\lieh},\, g \in \check{B},\]
where $\iota_{\hat{x}} \colon \lieh \rightarrow \lieg$ is the linear injection $X \mapsto \omega_{\hat{x}}(X_{\hat{x}}).$
\end{thm}
\noindent The group $\bar{B}_d$ is the discompact radical of ${\rm Ad}_{\lieh} B \leq \operatorname{Aut} \lieh$, see Definition~\ref{def.discompact.rad} below. Injectivity of $\iota_{\hat{x}}$ is evident from the axioms for the Cartan connection $\omega$, see Definition~\ref{defin.cartan.geom} below.

This embedding theorem says that for $\mu$-almost-every $x \in M$, for all $\hat{x} \in \hat{M}_x$, the injection $\iota_{\hat{x}}$ intertwines the $B$-representation on $\lieh$ with that of an algebraic subgroup $\check{B} < P$ on $\iota_{\hat{x}}(\lieh) \subseteq \lieg$.

\subsection{Results}

Here is our tractor generalization of the embedding theorem. By invariant connection is meant one which is invariant by all automorphisms; in practice, such connections will be defined in a~manner intrinsic to the geometry and invariant for this reason.

\begin{thm}
\label{thm.embedding-0}
 Let $\bigl(M, \hat{M}, \omega\bigr)$ be a Cartan geometry modeled on $(\lieg,P)$. Let $\rho$ be an infinitesimally faithful, completely reducible $G$-representation on $\V$ with $\rho(P)$ algebraic, with associated tractor bundle $\mathcal{V}$, equipped with any invariant connection $\nabla$.
 Let $B \leq \operatorname{Aut}(M,\omega)$ be a Lie subgroup preserving a
 probability measure $\mu$ on $M$.

Let $\mathcal{S}$ denote the $\nabla$-parallel sections of $\cV$.
Then for $\mu$-almost-every $x \in M$, for all $\hat{x} \in \hat{M}_x$, there exists
an algebraic subgroup $\check{B} <P$ with an algebraic epimorphism $R\colon \check{B} \rightarrow \bar{B}_d$ such that
\[ g .\iota_{\hat{x}} (X) = \iota_{\hat{x}} \left( R(g). X \right) \qquad \forall X \in \mathcal{S},\, g \in \check{B}.\]

\end{thm}
\noindent The linear injection $\iota_{\hat{x}}\colon \Gamma(\mathcal{V}) \rightarrow {\bf V}$ is determined by $\hat{x} \in \hat{M}$ as described in Section~\ref{sec.parabolic.tractor}, and $\bar{B}_d$ denotes the discompact radical of the image of $B$ in $\GL(\mathcal{S})$ (see Definition~\ref{def.discompact.rad} below).

As an application, we prove the following rigidity theorem in conformal pseudo-Riemannian geometry.

\begin{thm}
\label{thm.conf.application}
Let $(M,g)$ be a closed pseudo-Riemannian manifold of type $(p,q)$ with $p\leq q$ and $p+q = \dim M \geq 3$. Suppose that a group $H$ locally isomorphic to $\SU(p',q')$, with $p'<q'$, acts conformally on $(M,g)$. Then $p \geq 2p'-1$. If $p=2p'-1$, then necessarily $p<q$; if $(M,g)$ is moreover real-analytic and simply connected, then it is conformally equivalent to the universal cover of $\mbox{{\bf M\"ob}}^{p,q}$, with $H$ acting via a homomorphism to ${\rm O}(p+1,q+1)$.
\end{thm}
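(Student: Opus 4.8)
The plan is to feed the conformal Cartan geometry, together with its adjoint tractor bundle, into the Embedding Theorem (Theorem~\ref{thm.embedding}), to extract the inequality $p\ge 2p'-1$ from a comparison of restricted root systems, and to treat the borderline case by a dynamical argument forcing conformal flatness, followed by a developing-map argument.

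\emph{Setup and application of Theorem~\ref{thm.embedding}.}  The conformal class of $g$ is a Cartan geometry $(M,\hat M,\omega)$ modeled on $(\lieg,P)$ with $\lieg=\so(p+1,q+1)$ and $P$ the stabilizer of an isotropic line, and the $H$-action is by automorphisms of this geometry.  Since the conclusions concern only $\lieh=\mathfrak{su}(p',q')$ and conformal equivalence, I may assume $H$ connected with this Lie algebra.  By \v{C}ap's observation, $\lieh$ lies inside the space $\mathcal S$ of parallel sections of the adjoint tractor bundle $\mathcal A$ for the deformation (first BGG) connection, and each $\hat x\in\hat M$ determines a linear injection $\iota_{\hat x}\colon\mathcal S\hookrightarrow\lieg$, restricting on $\lieh$ to $X\mapsto\omega_{\hat x}(X_{\hat x})$.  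I take $B=A$ a maximal $\R$-split torus of $H$; being abelian it is amenable, so it preserves a probability measure $\mu$ on the compact manifold $M$.  Theorem~\ref{thm.embedding}, applied with $\rho=\mathrm{Ad}$ (infinitesimally faithful, irreducible, $\mathrm{Ad}(P)$ algebraic), provides, for $\mu$-a.e.\ $x$ and all $\hat x\in\hat M_x$, an algebraic subgroup $\check B<P$ mapping onto $\bar B_d$.  Here $\bar B_d$, the discompact radical of the image of $A$ in $\GL(\mathcal S)$, is a quotient of the split torus $A$ and is already faithful of rank $p'$ on $\lieh$, hence is an $\R$-split torus of rank exactly $p'$.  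Since $\mathbf V=\lieg$ carries the adjoint $G$-representation, $\check B<P$ acts on $\lieg$ via $\mathrm{Ad}$; as $A$ normalizes $\lieh$, the subspace $\iota_{\hat x}(\lieh)\subseteq\lieg$ is $\check B$-invariant, the $\check B$-action on it factors through $\bar B_d$, and via $\iota_{\hat x}$ it is the adjoint $A$-action on $\lieh$.

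\emph{The inequality.}  A rank-$p'$ split subtorus $T'$ of $\check B$ maps isomorphically onto $\bar B_d$; conjugating it, inside $G$, into a maximal $\R$-split torus---whose rank is $\min(p+1,q+1)=p+1$ since $p<q$---exhibits the adjoint $A$-module $\mathfrak{su}(p',q')$ as a $T'$-submodule of $\so(p+1,q+1)$.  Comparing weights: because $p'<q'$, the restricted root system of $\mathfrak{su}(p',q')$ is the non-reduced system $BC_{p'}$, which contains $p'$ pairs of proportional roots $\nu_i,2\nu_i$; the restricted root system of $\so(p+1,q+1)$, however, is the reduced system $B_{p+1}$.  The $T'$-weights on $\mathfrak{su}(p',q')$ must occur among the restrictions to $T'$ of the $B_{p+1}$-roots, so realizing each pair $\nu_i,2\nu_i$ in this way imposes, for each $i$, a nontrivial linear relation on $T'$; a direct verification that the $p'$ relations so obtained are independent forces $\mathrm{rank}\,T'\le(p+1)-p'$, whence $p'\le(p+1)-p'$, i.e.\ $p\ge 2p'-1$.  (The multiplicities $2(q'-p')$ of the roots $\pm\nu_i$ are absorbed by the multiplicities $q-p$ of the $B_{p+1}$-roots $\pm f_i$, so impose no further restriction, $q$ being unconstrained beyond $q>p$.)

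\emph{The borderline case.}  Suppose now $p=2p'-1$ and $(M,g)$ is real-analytic and simply connected.  Tightness of the above inclusion of tori pins down the embedding of $T'$ into $G$ up to the Weyl group of $G$, and in particular exhibits an $\R$-diagonalizable one-parameter subgroup of $H$ acting, near $\mu$-a.e.\ point, through a strongly contracting element of $P$ on the Cartan bundle $\hat M$; the Cartan curvature $\kappa$ at such a point is fixed by this contraction, hence vanishes, and Poincar\'e recurrence for $\mu$ together with real-analyticity propagates $\kappa\equiv 0$, so the conformal structure is flat.  Being simply connected, $M$ then carries a developing map $\delta\colon M\to\mbox{{\bf M\"ob}}^{p,q}$, equivariant with a homomorphism from $H$ to $\mathrm{O}(p+1,q+1)$; a by-now-standard completeness argument for conformally flat pseudo-Riemannian structures carrying an essential noncompact conformal group shows that $\delta$ is a covering map, and since $M$ is simply connected this identifies $(M,g)$ conformally with the universal cover of $\mbox{{\bf M\"ob}}^{p,q}$, with $H$ acting through $\mathrm{O}(p+1,q+1)$, as asserted.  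The main obstacle is this borderline case: converting the combinatorial tightness of the root-system embedding into genuine contraction dynamics on $\hat M$, and thence into the vanishing of $\kappa$, is the delicate step; the inequality itself is essentially linear algebra once Theorem~\ref{thm.embedding} has been applied, and the closing developing-map step is standard.
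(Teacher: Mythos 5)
There is a serious gap in the borderline case, and a smaller but real gap in the inequality argument.

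\textbf{The borderline case is fatally undersold.} You claim that the rank tightness ``exhibits an $\R$-diagonalizable one-parameter subgroup of $H$ acting $\ldots$ through a strongly contracting element of $P$,'' and then conclude $\kappa=0$ from this. This is not what the root correspondence produces, and the conclusion is false. The special one-parameter subgroup that Proposition \ref{prop.sutoso.roots} pins down is $\{h^t\}=\mathrm{diag}(e^t,e^t,1,\ldots,1,e^{-t},e^{-t})$, whose action on the isotropy representation $\BR^{p,q}$ is $\mathrm{diag}(1,e^{-t},\ldots,e^{-t},e^{-2t})$: this is of \emph{balanced linear} type, with a nontrivial neutral direction, not a uniform contraction. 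The paper emphasizes precisely this point (``Existence of a flow with the dynamics of $\{\varphi^t_Y\}$ does not necessarily imply local conformal flatness---see D. Alekseevsky's important examples''). So the step where the Cartan curvature is argued to vanish by stability under contraction simply fails; what one actually gets from stability is that the Weyl tensor takes values in the nilpotent subalgebra $\n(X_x)$ (Proposition \ref{prop.balanced.weyl}), together with a local Ricci-flat rescaling, a parallel isotropic vector field, and a $2$-dimensional isotropic space $\mathcal{I}$ of local parallel tractors, which then globalize via analyticity and simple connectedness. The missing piece is the \emph{second} application of the Embedding Theorem, now to the \emph{standard conformal tractor bundle} (Proposition \ref{prop.soln.not.fixed}), to show $H$ moves $\mathcal{I}$ nontrivially; only then does one get an irreducible nondegenerate $H$-submodule $\mathcal{S}'$ of tractor solutions of maximal index, and the tension between ``Weyl values are in a compact subalgebra annihilating $\iota(\mathcal{S}')$'' and ``Weyl values are in a nilpotent subalgebra $\n(X)$'' forces $W=0$. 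Your proposal contains neither the tractor-solution machinery nor the second run of the Embedding Theorem, so there is no actual path from balanced linear dynamics to flatness.

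\textbf{The inequality is also thinner than it needs to be.} You take $B=A$ a maximal split torus of $H$ and then argue purely from restricted-root combinatorics (``a direct verification that the $p'$ relations so obtained are independent''). The paper takes $B=B_H$ a full Borel, which is needed because the key structural inputs are the $\ad\b_H$-invariance of $\lieh^0=\iota^{-1}(\p)$ and the $\b_H$-invariance up to scale of the bilinear form on $\lieh/\lieh^0$ (Proposition \ref{prop.vincent}, due to Pecastaing); these give the isotropic-subspace bounds in Lemma \ref{lem.h.mod.h0} that force $p\ge 2p'-1$, and the low-rank cases $p\le 3$ require separate treatment. A bare count of restrictions of $B_{p+1}$ roots to a $p'$-torus does not by itself rule out configurations where several $BC_{p'}$ roots share preimages in a way that collapses the ``$p'$ independent relations''; the paper's control on $\lieh^0$ and the induced form is what prevents this. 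In short, the torus-only argument as sketched is not a proof.

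The final developing-map step is fine once conformal flatness has been established.
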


Without assuming $(M,g)$ simply connected or real-analytic, Pecastaing proved this result for $p'=1$ \cite[Corollary~1]{pecastaing.smooth.sl2r}. He proved in \cite[Theorem~3]{pecastaing.rank1} that any simple group $H < \mbox{Conf}(M,g)$, for $M$ closed and $g$ Lorentzian, locally embeds in ${\rm O}(2,n+1)$. In analogy with Zimmer's result on volume-preserving actions of simple groups
in Section~\ref{sec.intro.embedding} above, the following question arises.

\begin{Question}\label{quest.simple.embed}
Let $(M,\omega)$ be a closed manifold with a parabolic geometry modeled on $(\lieg, P)$. Suppose that $H \leq \Aut(M, \omega)$ is a simple Lie group. Does the Lie algebra $\lieh$ of $H$ embed in $\lieg$?
\end{Question}

For the proof of Theorem~\ref{thm.conf.application}, we apply the embedding theorem to two tractor bundles simultaneously: to the conformal Killing fields, as in the usual version of the theorem, and also to the conformal-to-almost-Einstein scales, corresponding to the parallel sections of the standard conformal tractor bundle.
From the first embedding, we prove the algebraic Proposition~\ref{prop.sutoso.roots}, which says $p \geq 2 p'-1$; in the case of equality, it gives $p< q$ as well as precise information about how the positive root spaces of $\lieh$ embed in those of $\lieg$. Proposition~\ref{prop.sutoso.roots} demonstrates how in an extremal case, and with $H$ simple, careful algebraic analysis of the output of the classical embedding theorem yields that it is very close to a Lie algebra embedding of $\lieh$ into $\lieg$.

Using the conclusions of Proposition~\ref{prop.sutoso.roots}, we obtain a special one-parameter subgroup in $\check{B}$, which is of \emph{balanced linear} type.
The Frobenius theorem ensures that this one-parameter subgroup is the isotropy of a one-parameter group of local conformal transformations fixing a point $x$ (see~Corollary~\ref{cor.emb.frob}); analyticity is used at this step. This conformal flow in turn implies the existence of local tractor solutions, more precisely of a 2-parameter family of Ricci-flat metrics in the conformal class restricted to a neighborhood of $x$. Analyticity of the metric and $1$-connectedness of~$M$ allow us to extend these solutions to all of~$M$.
The embedding theorem applied to the second bundle implies $H$ acts nontrivially on the space $\mathcal{S}$ of conformal-to-Einstein tractor solutions. Many more solutions ensue, enough to guarantee conformal flatness. The topological~assumptions on $M$ then imply
\[
(M,[g]) \cong \widetilde{\mbox{{\bf M\"ob}}}^{p,q}.
\]

If $H$ locally isomorphic to $\SU(p',q')$ acts isometrically on $(M,g)$ as above, then by Zimmer's result there is a Lie algebra embedding $\mathfrak{su}(p',q') \hookrightarrow \so(p,q)$, which implies $p \geq 2p'$ (see Proposition~\ref{prop.sutoso.roots}). Thus the extremal case $p = 2p'-1$ of Theorem~\ref{thm.conf.application} corresponds to \emph{essential} conformal transformations.

A version of Question \ref{quest.simple.embed} for conformal transformations, in particular for essential conformal groups, is discussed in \cite{pecastaing.rank1}. See \cite{bn.simpleconf,fz.simpleconf} for important results in this direction in terms of the real rank of $H$.

\section{Background}

\subsection{Parabolic geometries and tractor connections}

\label{sec.parabolic.tractor}
We will now briefly recall some basics about Cartan geometries, and parabolic geometries in particular.
For a general introduction to Cartan geometries, see \cite{sharpe}.
The comprehensive reference on
parabolic geometries is \cite{cap.slovak.book.vol1}.

\subsubsection{Cartan geometries and parabolic geometries}
Suppose $G$ is a Lie group and $P<G$ a closed subgroup. We will always assume that $P$ contains no nontrivial normal subgroup of $G$.
\begin{defin}
 \label{defin.cartan.geom}
 A \emph{Cartan geometry} modeled on $(\g, P)$ on a manifold $M$ is given by a principal $P$-bundle \smash{$\pi\colon \hat{M}\rightarrow M$} equipped with a
 \emph{Cartan connection}, that is, a $P$-equivariant $1$-form ${\omega\in\Omega^1\bigl(\hat{M},\g\bigr)}$ such that
 \begin{itemize}\itemsep=0pt
 \item $\omega_{\hat{x}}\colon T_{\hat{x}} \hat{M}\rightarrow \g$ is an isomorphism for all $\hat{x}\in \hat{M}$;
 \item $\omega(X)= \check{X}$ for any $\check{X}\in \p$,
 where $X_{\hat{x}}= \frac{{\rm d}}{{\rm d}t}\big|_{t=0 } \hat{x}\cdot {\rm e}^{t \check{X}}$ is the vertical vector field on~$\hat{M}$ generated by $\check{X}\in\p$.
 \end{itemize}
 \end{defin}
 The \emph{homogeneous model} of a Cartan geometry as in Definition~\ref{defin.cartan.geom} is the homogeneous space~$G/P$ equipped with the natural projection $G\rightarrow G/P$ and
 the Maurer--Cartan form ${\omega_G\in\Omega^1(G,\g)}$ of $G$, which encodes the natural trivialization $TG\cong G\times\g$ by left-invariant vector fields.
 The \emph{curvature of a Cartan geometry} is given by the following semibasic, $\g$-valued, $P$-equivariant $2$-form on $\hat{M}$:
 \begin{equation*}
 K(\xi,\eta)={\rm d}\omega(\xi,\eta)+[\omega(\xi), \omega(\eta)] \qquad\textrm{for } \xi,\eta\in\mathfrak X(\hat{M}).
 \end{equation*}
 The significance of the curvature is that it vanishes if and only if the Cartan geometry is locally isomorphic to its homogeneous model.

 Given a Cartan geometry, any representation $\rho\colon P\rightarrow\textrm{GL}({\bf V})$ of $P$ induces
 a vector bundle by forming the associated bundle to the Cartan bundle:
 \[
 \mathcal V =\hat{M} \times_\rho {\bf V}.
 \]
 Recall that the space of smooth sections $\Gamma(\mathcal V)$ of $\mathcal V$ can be identified with the space \smash{$C^\infty\bigl(\hat{M}, {\bf V}\bigr)^P$} of smooth, $P$-equivariant functions $f\colon \hat{M}\rightarrow {\bf V}$, that is,
 \[f(\hat{x}\cdot p)= {\rho\bigl(p^{-1}\bigr). f(\hat{x})}.\]
 In this way, each $\hat{x} \in \hat{M}$ gives a linear function $\iota_{\hat{x}}\colon \Gamma(\mathcal{V}) \rightarrow {\bf V}$ by evaluation.

 Sections of the adjoint bundle $\mathcal{A} = \hat{M} \times_P \lieg$ from Section~\ref{sec.intro.tractor} correspond via $\omega$ to right-$P$-invariant vector fields on $\hat{M}$, which descend to vector fields on $M$.
The Cartan connection $\omega$ induces isomorphisms of vector bundles
 \begin{equation}\label{iso_tangent_bundle}
 TM\cong \hat{M} \times_P \g/\p\qquad\textrm{and}\qquad T^*M\cong \hat{M} \times_P (\g/\p)^*.
 \end{equation}
 These identifications can easily be derived from the axioms for $\omega$ (see also \cite[Theorem~5.3.15]{sharpe}).
 Because the curvature $K$ is semibasic and $P$-equivariant, it
 descends to a $2$-form on $M$ with values in $\mathcal{A}$, which, via \eqref{iso_tangent_bundle}, can be identified with a $P$-equivariant \emph{Cartan curvature function}
 \begin{equation*}
 \kappa\colon\ \hat{M} \rightarrow\Lambda^2 (\g/\p)^*\otimes \g.
 \end{equation*}

Many important geometric structures admit equivalent descriptions as Cartan geometries, in particular semi-Riemannian metrics. A broad and interesting class of examples are the geometric structures underlying \emph{parabolic geometries}, which are Cartan geometries of type $(\g,P)$, where $P<G$ is a parabolic subgroup of a semisimple Lie group.
These correspond to certain bracket-generating distributions on $M$ endowed with a geometric structure defined by $G_0$, the reductive complement in a Levi decomposition of $P \cong G_0 \ltimes P_+$. These structures correspond canonically to \emph{normal} parabolic Cartan geometries. See \cite{cap.schichl.equiv,morimoto,tanaka} for the precise general statements, including the notions of normal and \emph{regular}.
In the table below, we list some of the most prominent examples of geometric structures admitting an equivalent description as regular normal parabolic geometries of type $(\g,P)$. For more examples, see \cite{cap.slovak.book.vol1}. A concise reference on the construction
of the Cartan connection in conformal or projective geometry is \cite[Section IV]{kobayashi.transf}. All parabolic geometries will be assumed to be regular and normal throughout the sequel.

\begin{table}[h]\centering
\small\renewcommand{\arraystretch}{1.15}
\begin{tabular}{ |l|c|l| }
\hline
 Geometric structure & $\g$ & $P$ \\
\hline\hline
Projective structures on $n$-manifolds & $\mathfrak{sl}(n+1,\BR)$& stabilizer of line in $\BR^{n+1}$\\
\hline
Almost c-projective structures &$\mathfrak{sl}(n+1,\C)$ &stabilizer of complex \\
on $2n$-manifolds, $n\geq 2$ &&line in $\C^{n+1}$\\
\hline
Conformal $n$-manifolds of &$\mathfrak{so}(p+1,q+1)$ & stabilizer of an isotropic \\
signature $(p,q)$, $n=p+q\geq 3$ & &line in $\BR^{p+1,q+1}$ \\
 \hline
 Almost quaternionic $4n$-manifolds& $\mathfrak{sl}(n+1,{\bf H})$ & stabilizer of a quaternionic \\
 &&line in ${\bf H}^{n+1}$ \\
 \hline
 Non-degenerate partially integrable &$\mathfrak{su}(p+1,q+1)$& stabilizer of an isotropic \\
 CR-structures of hypersurface type & &complex line in Hermitian \\
 of signature $(p,q)$ &&vector space $\C^{p+1,q+1}$\\
 \hline
 Generalized path geometries&$\mathfrak{sl}(n+2,\R)$& stabilizer in $\BR^{n+2}$ of a flag \\
 on $2n+1$-manifolds, $n\geq 2$ && of line inside a plane\\
 \hline
 (2,3,5)-distributions on $5$-manifolds &$G_2$&stabilizer of highest weight\\
 && line in the $7$-dimensional \\
 &&fundamental representation\\
 \hline
\end{tabular}
\end{table}

On the Lie algebra level, a Levi decomposition will be denoted $\p \cong \g_0 \ltimes \p_+$. There is a $G_0$-invariant subspace of $\g$ complementary to $\p$, which is denoted $\g_-$; it is of course isomorphic as a~$G_0$-module to $\g/\p$. The nilradical $\p_+ \lhd \p$ is moreover isomorphic as a $P$-module to $(\g/\p)^*$. Any representation $\bf V$ of $\p$ on which the center of $\g_0$ acts diagonalizably
has
 a $P$-invariant filtration
\begin{equation}\label{filtration_rep}
{\bf V}={\bf V}^{-\ell}\supset {\bf V}^{- \ell+1}\supset \cdots \supset {\bf V}^0\supset {\bf V}^{1}\supset \cdots \supset {\bf V}^{m} \supset 0
\end{equation}
such that each quotient ${\bf V}^{i}/{\bf V}^{i+1}$ is a completely reducible $\p$-representation, which implies that~$\p_+$ acts trivially on ${\bf V}^{i}/{\bf V}^{i+1}$.
In particular, the $\p$-representation $\g$ admits such a filtration
 \begin{equation*}
 \g=\g^{-k}\supset\dots \supset \g^0\supset\dots \supset \g^k,
 \end{equation*}
 where $\g^0=\p$ and $\g^1=\p_+$, and so does the isotropy representation
 \begin{equation}\label{filtr_g/p}
 \g/\p=\g^{-k}/\p\supset\dots \supset \g^{-1}/\p.
\end{equation}
Given two filtered vector spaces $\bf V$ and $\bf W$, there is an induced filtration by \emph{homogeneity} on the linear maps $\operatorname{Hom}({\bf V},{\bf W})={\bf V}^*\otimes {\bf W}$,
in which $\phi\in\operatorname{Hom}({\bf V},{\bf W})^i$ if $\phi({\bf V}^j)\subset {\bf W}^{i+j}$ for all $j$.

\subsubsection{Tractor connections}
\label{sec.tractor.connxns}
For a Cartan geometry $\bigl(M,\hat{M}, \omega\bigr)$ of any type $(\g,P)$, the natural vector bundles associated to $G$-representations play a distinguished role.
The Cartan connection $\omega$ can be canonically extended to a principal connection on the principal $G$-bundle $\tilde{M}= \hat{M} \times_PG$ and as such it induces a linear connection
$\nabla^\omega$ on all vector bundles of the form
\[\mathcal V=\tilde{M} \times_G{\bf V}= \hat{M} \times_P{\bf V}.\]
\begin{defin} Given a Cartan geometry,
the vector bundles associated to $G$-representations are called \emph{tractor bundles} and $\nabla^\omega$ the \emph{tractor connection}.
\end{defin}

The adjoint tractor bundle comes with a natural projection $\Pi\colon \mathcal A\rightarrow TM$ to the tangent bundle induced by the projection $\g \rightarrow \g/\p$,
and an algebraic action on any other tractor bundle $\mathcal A\times \mathcal V\rightarrow\mathcal V$, denoted by $(s,t)\mapsto s\bullet t$, induced by the action of~$\g$ on ${\bf V}$.

For any vector bundle $\mathcal W$ associated to any $P$-representation ${\bf W}$, there is a natural differential operator, called the \emph{fundamental derivative}, denoted by
 \begin{align*}
 D\colon\ \Gamma(\mathcal A)\times\Gamma(\mathcal W) \rightarrow \Gamma(\mathcal W),\qquad
(s,t) \mapsto D_st.
 \end{align*}
Viewing $t$ in \smash{$C^\infty\bigl(\hat{M}, {\bf W}\bigr)^P$} and $s$ in \smash{$C^\infty\bigl(\hat{M},\g\bigr)^P$}, the section \smash{$D_st \in C^\infty\bigl(\hat{M},{\bf W}\bigr)^P$} evaluates at $\hat{x} \in \hat{M}$ to the derivative of $t$ in the direction of $\omega_{\hat{x}}^{-1}(s(\hat{x}))$. The tractor connection on a tractor bundle $\mathcal V$ can then be expressed in terms of the fundamental derivative as follows:
\begin{equation}
\label{eqn.def.tractor.connxn}
\nabla_{\Pi(s)}^\omega t=D_st+s\bullet t\qquad \textrm{for}\ s\in \Gamma(\mathcal A),\, t\in\Gamma(\mathcal V).
\end{equation}
It is a routine verification that this expression defines a connection; in particular, at any point, the right-hand side only depends on $\Pi(s)$ at that point and on $t$ in a neighborhood. If $t$ is a~parallel section, it is the unique one with a given value $t(x)$ at any $x \in M$; it follows that $\iota_{\hat{x}}$ is an injective linear map to ${\bf V}$ when restricted to parallel sections in $\Gamma(\mathcal{V})$, for any $\hat{x} \in \hat{M}$.

As discussed in the introduction, parallel sections of tractor connections typically correspond to solutions of interesting equations for the geometric structure on $M$ described by the Cartan geometry. More precisely, in many contexts, certain deformations of the tractor connections by curvature terms yield invariant connections on the tractor bundles for which the parallel sections correspond to the solutions to a particular geometric PDE of interest.
 In the case of parabolic geometries, there is a powerful framework for constructing such connections, which we now briefly present.
\subsubsection{First BGG-operators of parabolic geometries}\label{sec_BGG}
Suppose $\bigl(M,\hat{M},\omega\bigr)$ is a regular normal parabolic geometry of type $(\g,P)$ with $\dim M=n$. It was shown in \cite{CD.BGG, css.bgg.seqs} that any $G$-representation ${\bf V}$ induces a \emph{BGG-sequence}
of intrinsically defined differential operators of the form
\begin{equation}\label{bgg.seq}
 \Gamma(\mathcal H_0)\stackrel{\mathcal D^{\mathcal V}_0\ }{\rightarrow}\Gamma(\mathcal H_1)\overset{\mathcal D^{\mathcal V}_1\ }{\rightarrow}\cdots
 \overset{\mathcal D^{\mathcal V}_{n-1}}{\rightarrow}\Gamma(\mathcal H_{n}),
 \end{equation}
 where \[\mathcal H_k = \mathcal H_k(M,\mathcal V)=\hat{M}\times_P H_k(\p_+,{\bf V})\] is the bundle associated to the $k$-th homology $H_k(\p_+,{\bf V})$ of $\p_+$ with coefficients in ${\bf V}$. By \cite{kostant.harmonic.curv},
 the bundles $\mathcal H_k$ are weighted tensor bundles on $M$. In the case of the homogeneous model $M \cong G/P$, the BGG-sequence is a complex, which is dual to the \emph{BGG-resolution} of ${\bf V}$ by generalized Verma modules, whence the terminology.

 The first operator $\mathcal D^{\mathcal V} =\mathcal D^{\mathcal V}_0\colon \Gamma(\mathcal H_0)\rightarrow \Gamma(\mathcal H_1)$ in \eqref{bgg.seq} always defines an overdetermined system of PDEs, the
 domain of which is easily computed to be $\mathcal H_0=\mathcal V/\mathcal V^+,$ where $\mathcal V^+ \subset \mathcal V$ is the subbundle corresponding to the $P$-submodule $\p_+{\bf V} \subset {\bf V}$.
 We denote by $\Pi^\mathcal V\colon \mathcal V\rightarrow \mathcal V/\mathcal V^+=\mathcal H_0$ the natural projection. Then, \cite{hsss.prol.connxn, neuss.prolong} shows that the projection $\Pi^\mathcal V$ restricts to a bijection between
 sections of $\mathcal V$ that are parallel for a particular linear connection on $\cV$ and solutions to~\smash{$\mathcal D^{\mathcal V}(\sigma)=0$}.

 The linear connection on $\mathcal V$, as constructed in \cite{hsss.prol.connxn}, the parallel sections of which correspond to solutions of $\mathcal D^{\mathcal V}(\sigma)=0$, is called the \emph{prolongation connection} of $\mathcal D^{\mathcal V}$ and will be denoted by~$\nabla^\mathcal V$. In general, $\nabla^\mathcal V$ differs from the tractor connection $\nabla^\omega$ by tensors built from the curvature of~$\nabla^\omega$. As above, the linear map $\iota_{\hat{x}}$ is injective when restricted to parallel sections for $\nabla^{\mathcal{V}}$ for all~${\hat{x} \in \hat{M}}$.

\subsubsection{Automorphisms acting on tractor bundles}
\label{sec.aut.tractor}

The \emph{automorphisms} of a Cartan geometry $\bigl(M,\hat{M},\omega\bigr)$ are the diffeomorphisms of $M$ that lift to principal bundle automorphisms of $\hat{M}$ which preserve $\omega$. For $f \in \operatorname{Aut}(M,\omega)$, we will also denote by $f$ its lift to $\hat{M}$, satisfying $f^* \omega = \omega$.
The automorphisms of a Cartan geometry naturally preserve the underlying geometric structure; for example, the automorphisms of the Cartan geometry associated to a conformal semi-Riemannian structure are precisely the conformal transformations.

Because it preserves the framing by $\omega$, the action of $\Aut(M,\omega)$ on $\hat{M}$ is free and proper. If $f \in \Aut(M,\omega)$ stabilizes $x \in M$, then for any $\hat{x} \in \hat{M}_x$, there is a unique $p \in P$ such that {$f.\hat{x} = \hat{x}.p$}, called the \emph{isotropy of $f$ at $x$ with respect to $\hat{x}$}.

For any $P$-module ${\bf W}$, the group $\Aut(M,\omega)$ acts on the associated bundle $\mathcal{W} = \hat{M} \times_P {\bf W}$.
The action of $f \in \Aut(M,\omega)$ on a section \smash{$X \in \Gamma(\cS)\cong C^{\infty}\bigl(\hat{M}, {\bf W}\bigr)^P$} is simply by
$f^*X = X \circ f^{-1}$.

Assume now that $\bigl(M,\hat{M},\omega\bigr)$ is a regular parabolic geometry modeled on $(\g, P)$, and that $\rho\colon G\rightarrow \textrm{GL}(\V)$ is a $G$-representation.
Let $\cV=B\times_P\V$ be the corresponding tractor bundle with tractor connection $\nabla^{\omega}$. Let $\nabla^{\cV}$ be the prolongation connection of the
first BGG-operator~$D^{\cV}$ corresponding to $\cV$.
\begin{prop} Any $f \in\emph{Aut}(M, \omega)$ preserves both $\nabla^\omega$ and $\nabla^{\cV}$, that is, $f^* \nabla^\omega = \nabla^\omega$ and~${f^*\nabla^{\cV}=\nabla^{\cV}}$.
\end{prop}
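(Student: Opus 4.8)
The plan is to show separately that $f^*\nabla^\omega=\nabla^\omega$ and $f^*\nabla^{\cV}=\nabla^{\cV}$, with the first being essentially formal from the construction and the second following from naturality of the BGG machinery.

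For the tractor connection: the key point is that $\nabla^\omega$ is built entirely out of $\omega$, so any diffeomorphism preserving $\omega$ must preserve it. Concretely, I would recall from \eqref{eqn.def.tractor.connxn} that $\nabla^\omega_{\Pi(s)}t = D_s t + s\bullet t$, and observe that each of the three ingredients is natural. First, the lift of $f$ to $\hat M$ satisfies $f^*\omega=\omega$, hence it commutes with the constant vector fields $\omega^{-1}(\xi)$ for $\xi\in\g$; since the fundamental derivative $D_s t$ is computed as the derivative of $t\in C^\infty(\hat M,{\bf W})^P$ along $\omega^{-1}(s)$, and $f$ acts on such equivariant functions by $f^*t = t\circ f^{-1}$, a direct computation gives $D_{f^*s}(f^*t) = f^*(D_s t)$. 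Second, the projection $\Pi:\mathcal A\to TM$ is induced by $\g\to\g/\p$ and the identification $TM\cong\hat M\times_P\g/\p$, both of which are canonical, so $\Pi$ is $\Aut(M,\omega)$-equivariant, i.e.\ $\Pi(f^*s)=f_*^{-1}\Pi(s)$. Third, the algebraic action $\bullet$ is induced pointwise by the $\g$-action on ${\bf V}$, so it too is equivariant: $f^*(s\bullet t)=(f^*s)\bullet(f^*t)$. Combining these and using that every section of $TM$ near a point is of the form $\Pi(s)$ for some $s\in\Gamma(\mathcal A)$ yields $f^*\nabla^\omega=\nabla^\omega$.

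For the prolongation connection: I would invoke the fact that $\nabla^{\cV}$ is \emph{intrinsically defined} from the parabolic geometry. The BGG machinery of \cite{CD.BGG, css.bgg.seqs, hsss.prol.connxn, neuss.prolong} produces $\nabla^{\cV}$ from $\nabla^\omega$ by adding correction terms that are universal polynomial expressions in the Cartan curvature function $\kappa$ and its covariant derivatives, together with the Kostant codifferential and other $P$-equivariant algebraic operators on the representation ${\bf V}$. Since $f\in\Aut(M,\omega)$ preserves $\omega$, it preserves $\kappa$ (the curvature function transforms naturally, $\kappa\circ f = \kappa$ as a consequence of $f^*\omega=\omega$), hence it preserves every covariant derivative $\nabla^\omega\cdots\nabla^\omega\kappa$ by the first part of the proof, and it obviously preserves the purely algebraic, $P$-equivariant operators. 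Therefore $f$ preserves each correction term and hence $\nabla^{\cV}$ itself. Alternatively — and perhaps more cleanly — one can argue via the defining property: $\nabla^{\cV}$ is characterized as the unique invariant connection on $\cV$ whose parallel sections project under $\Pi^{\cV}:\cV\to\mathcal H_0$ bijectively onto the kernel of the first BGG operator $\mathcal D^{\cV}$; since $\mathcal D^{\cV}$, being built naturally from the geometry, satisfies $f^*\circ\mathcal D^{\cV}=\mathcal D^{\cV}\circ f^*$, the pushed-forward connection $f^*\nabla^{\cV}$ has the same characterizing property, and uniqueness forces $f^*\nabla^{\cV}=\nabla^{\cV}$.

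The main obstacle is making rigorous the claim that $\nabla^{\cV}$ is ``intrinsic'' enough for invariance to follow — i.e.\ checking that the construction in \cite{hsss.prol.connxn} truly uses only natural operations. I expect this to be where care is needed: one must either trace through the inductive construction of the splitting/prolongation (the ``generalized BGG splitting operators'' and the correction procedure are all expressed via $\nabla^\omega$, $\kappa$, the Kostant Laplacian $\square$ and its partial inverse, all of which are natural), or instead rely on the uniqueness characterization, which sidesteps the explicit formulas entirely. I would present the uniqueness argument as the primary proof, since it is short and robust, and mention the explicit-correction-term viewpoint as a remark. The tractor-connection half is routine and I would write it out; the prolongation half I would keep short, citing \cite{hsss.prol.connxn, neuss.prolong} for naturality of the BGG operators and the uniqueness of the prolongation connection among invariant connections with the stated parallel-sections property.
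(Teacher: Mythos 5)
Your argument for $f^*\nabla^\omega=\nabla^\omega$ is correct and essentially identical to the paper's: express $\nabla^\omega$ via the fundamental derivative and the algebraic action using \eqref{eqn.def.tractor.connxn}, and observe that both pieces are manifestly $\omega$-natural. No issues there.

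The prolongation-connection half, however, has a real gap in the version you designate as primary. You propose to characterize $\nabla^{\cV}$ as ``the unique \emph{invariant} connection whose parallel sections project bijectively onto $\ker\mathcal D^{\cV}$,'' and then argue that $f^*\nabla^{\cV}$ has the same property. But this is circular: the proposition is precisely the statement that $\nabla^{\cV}$ is $\Aut(M,\omega)$-invariant, so you cannot invoke invariance as part of the uniqueness clause, nor can you assert that $f^*\nabla^{\cV}$ is itself invariant without already knowing the conclusion. And if you drop the word ``invariant,'' the bijective-projection property alone is not a uniqueness characterization --- two distinct connections can have the same space of parallel sections (add any $\textrm{End}(\cV)$-valued one-form vanishing on that finite-dimensional space), so nothing forces $f^*\nabla^{\cV}=\nabla^{\cV}$.

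The paper avoids this by using the \emph{genuine} uniqueness characterization of $\nabla^{\cV}$ from \cite{hsss.prol.connxn}, which is intrinsic and makes no reference to automorphisms: writing $\Lambda=\nabla^{\cV}-\nabla^\omega$ and $R^{\cV}$ for the curvature of $\nabla^{\cV}$, the prolongation connection is the unique connection for which (1) $\Lambda$ lies in the homogeneity-$\geq 1$ part of $\Omega^1(M,\textrm{End}(\cV))$ and in $\Gamma(\textrm{Im}(\partial^*_{\cV}\otimes\textrm{Id}_{\cV^*}))$, and (2) $R^{\cV}\in\Gamma(\ker(\partial^*_{\cV}\otimes\textrm{Id}_{\cV^*}))$. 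Both conditions say that a certain tensor is a section of a subbundle that is associated to a $P$-submodule, hence preserved by any $f\in\Aut(M,\omega)$. Since $f^*\nabla^\omega=\nabla^\omega$ (the first half), one has $f^*\nabla^{\cV}-\nabla^\omega=f^*\Lambda$ and the curvature of $f^*\nabla^{\cV}$ is $f^*R^{\cV}$, so $f^*\nabla^{\cV}$ satisfies (1) and (2), and uniqueness gives the result. Your ``remark'' route --- tracing that the correction terms are universal natural expressions in $\kappa$, $\square$, etc. --- is the same idea in less crystallized form and would also work, but it requires going through the inductive construction; the normalization conditions (1)--(2) package exactly the naturality you need into a one-line uniqueness statement, which is why the paper uses them instead.
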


\begin{proof}
Since $f^*\omega=\omega$ for any $f \in \Aut(M,\omega)$, the fundamental derivative is invariant:
\[D_{f^* X} f^* Y = f^* (D_X Y) \qquad \forall X \in \Gamma({\mathcal{A}}), \, Y \in \Gamma(\mathcal{V}).\]
The algebraic action of $\Gamma(\mathcal{A})$ on $\Gamma(\mathcal{V})$ is evidently invariant by any principal bundle automorphism.
Thus it follows from (\ref{eqn.def.tractor.connxn}) that $f^*\nabla^\omega=\nabla^{\omega}$.

Set $\Lambda=\nabla^{\cV}-\nabla^{\omega}\in\Omega^1(M, \operatorname{End}(\cV))$ and denote by $R^{\cV}\in\Omega^2(M, \operatorname{End}(\cV))$ the curvature of~$\nabla^{\cV}$. The differential computing the Lie algebra homology of $\p_+$ with values in $\V$ induces via the identification
\[T^*M \cong \hat{M} \times_P (\g / \p)^* \cong \hat{M} \times_P \p^+\]
 a bundle map
$\partial^*_{\cV}\colon \Omega^\ell(M, \cV)\rightarrow \Omega^{\ell-1}(M, \cV)$. The connection $\nabla^{\cV}$ is characterized by the two properties \cite{hsss.prol.connxn}:
\begin{itemize}\itemsep=0pt
\item[(1)] $\Lambda\in\Omega^1(M, \operatorname{End}(\cV))^1\cap \Gamma(\operatorname{Im}(\partial^*_{\cV}\otimes {\rm Id}_{\cV^*}))$,
\item[(2)] $R^{{\cV}}\in\Gamma(\ker(\partial^*_{\cV}\otimes {\rm Id}_{\cV^*})).$
\end{itemize}
The vector bundle $(T^*M\otimes\operatorname{End}(\cV))^1$ is the subbundle of
\[
T^*M\otimes\operatorname{End}(\cV)\cong \hat M\times_P\operatorname{Hom}(\g/\p,\operatorname{End}({\bf V}))
\]
 corresponding to the first filtration component, that is, the homogeneity-one component, of the filtered $P$-representation
$\operatorname{Hom}(\g/\p,\operatorname{End}({\bf V}))$, where the filtration on that space is induced by~\eqref{filtration_rep} and~\eqref{filtr_g/p}; as it is an associated bundle to the Cartan bundle, it is acted on by $\Aut(M,\omega)$. The subbundle $\operatorname{Im}(\partial^*_{\cV}\otimes {\rm Id}_{\cV^*})$ is similarly $\Aut(M,\omega)$-invariant, so any $f \in \Aut(M,\omega)$ preserves their intersection, thus $f^*\Lambda$
is a section of this intersection. As $f^*\Lambda = f^*\nabla^\cV-\nabla^{\omega}$, the pullback~$f^* \nabla^\cV$ satisfies (1).
Similarly, the curvature of $f^*\nabla^\cV$, namely $f^*R^{{\cV}}$, is a section of the associated vector bundle $\ker(\partial^*_{\cV}\otimes {\rm Id}_{\cV^*})$, so $f^* \nabla^\cV$ satisfies (2) as well.
We conclude $f^*\nabla^{\cV}=\nabla^{\cV}$ thanks to this characterization.
\end{proof}

\subsubsection{The conformal tractor connection}
\label{sec.conf.tractor}

Let $(M,g)$ be a semi-Riemannian manifold of dimension $n \geq 3$ and let $\bigl(M, \hat{M}, \omega\bigr)$ and $\mathcal{V}$ be the corresponding parabolic Cartan geometry with model $(\g, P)$ and conformal tractor bundle, respectively, as in Section~\ref{sec.intro.tractor}. Recall that $P$ is the stabilizer of an isotropic line ${\bf V}_{1} \subset {\bf V} \cong \BR^{p+1,q+1}$. Setting ${\bf V}^{1} = {\bf V}_{1}$ and {${\bf V}^0 = {\bf V}_{1}^\perp$}, there is a $P$-invariant filtration
${{\bf V}^{1} \subset {\bf V}^0 \subset {\bf V}^{-1} = {\bf V}}$, cf.\ \eqref{filtration_rep}, giving rise to an intrinsic filtration $ \mathcal{V}^{1} \subset \mathcal{V}^0 \subset \mathcal{V}$.

A Levi complement $G_0 < P$ preserves a splitting ${\bf V} = {\bf V}_{1} \oplus {\bf V}_0 \oplus {\bf V}_{-1}$.
A choice of metric $g_0 \in [g]$ gives a corresponding decomposition
\begin{equation}
\label{eqn.conf.splitting}
\mathcal{V} = \mathcal{V}_{1} \oplus \mathcal{V}_0 \oplus \mathcal{V}_{-1}
\end{equation}
in which $\mathcal{V}_0 \cong TM$ and $\mathcal{V}_{-1}$ is identified with the line bundle of conformal scales.
References for the formulas in this section are \cite[equations~(22) and~(32)]{curry.gover.tractors}.
With respect to a decomposition~(\ref{eqn.conf.splitting}), the standard conformal tractor connection $\nabla^\omega$ is
\begin{equation}
\label{eqn.conf.connxn}
 \nabla^\omega \colon\ \begin{pmatrix} \nu \\ X \\ \sigma \end{pmatrix} \mapsto \begin{pmatrix} {\rm d} \nu - P \llcorner X \\ \nabla X + \nu \cdot {\rm Id} + \sigma P^\sharp \\ {\rm d} \sigma - X^* \end{pmatrix},
 \end{equation}
where $P^\sharp$ is the symmetric endomorphism corresponding to the Schouten tensor $P$, and all duals are with respect to $g_0$. Changing the metric $g_0 \in [g]$ corresponds to applying an element of $P \backslash G_0$ to the decomposition of tractor sections, and this transformation commutes with the tractor covariant derivative.
It follows that the intrinsic $(p+1,q+1)$-metric on $\mathcal{V}$ is $\nabla^\omega$-parallel.

Continuing with $g_0 \in [g]$, the $\sigma$-component of a $\nabla^\omega$-parallel section satisfies
\[ \nabla \mbox{grad } \sigma + \sigma P^\sharp = - \nu \cdot {\rm Id},\]
which is equivalent to (\ref{eqn.conf.einstein}) (with the sign of $\nu$ switched). Parallel sections have constant self-scalar products with respect to the tractor metric and in particular have a well-defined causal~type.

The curvature of $\nabla^\omega$ has the form, in the decomposition (\ref{eqn.conf.splitting}),
\begin{equation}
\label{eqn.conf.curv}
R^\omega(Y,Z) \colon\ \begin{pmatrix} \nu \\ X \\ \sigma \end{pmatrix} \mapsto \begin{pmatrix} - C(Y,Z)X \\ \sigma C(Y,Z)^* + W(Y,Z) X \\ 0 \end{pmatrix},
\end{equation}
where $W$ is the Weyl curvature and $C$ the Cotton tensor of $g_0$:
\[C(Y,Z)X = \frac{1}{2} \left( (\nabla_Y P)(Z,X) - (\nabla_Z P)(Y,X) \right).\]

\subsection{Borel density theorem}

The proof of the embedding theorem rests on the Borel density
theorem.
The term
\emph{algebraic} here means real-algebraic. A Zariski closure in this setting comprises the real points of the Zariski closure over ${\bf C}$.

 We use a version of the Borel density theorem which can be found in \cite[Corollary~3.2]{bfm.zimemb}, and which is due in this generality to Shalom
\cite[Theorem~3.11]{shalom.discompact} (see also \cite[Theorem~2.6]{dani.borel.density}).
The setting is a locally compact group $S$ acting on an algebraic variety $\mathbb{W}$ via a
continuous homomorphism $\rho \colon S \rightarrow \mbox{Aut } \mathbb{W}$. We
will denote by
$\bar{S}$ the Zariski closure of $\rho(S)$ in $\mbox{Aut }\mathbb{W}$.

\begin{defin}
 \label{def.discompact.rad}
 Let $G$ be an algebraic group.
The \emph{discompact radical} of $G$ is the maximal algebraic subgroup
of $G$ with no proper cocompact, algebraic, normal subgroups, denoted $G_d$.
 \end{defin}

 The discompact radical can be obtained as the minimal algebraic
 subgroup of $G$ containing all algebraic subgroups with no non-trivial compact
 algebraic quotients, which exists by the Noetherian property of
 algebraic subgroups. See \cite{shalom.discompact} for more details.

\begin{thm}[{\cite[Corollary 3.2]{bfm.zimemb}} of the Borel density theorem]
 \label{thm.borel.density}
Let $S$ be a locally compact group and $\mathbb{W}$ an algebraic variety, with
an action $\rho\colon S \rightarrow \mbox{Aut } \mathbb{W}$.
Suppose $S$ acts by homeomorphisms on a topological space $M$
preserving a finite Borel measure $\mu$. Assume $\phi\colon M \rightarrow
\mathbb{W}$ is an $S$-equivariant measurable map. Then $\phi(x)$ is fixed
by $\bar{S}_d$ for $\mu$-almost-every $x \in M$.
 \end{thm}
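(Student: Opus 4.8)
The plan is to deduce this from the Borel Density Theorem in the generality of Shalom \cite{shalom.discompact} by the standard push-forward argument; this is exactly how \cite[Cor 3.2]{bfm.zimemb} obtains it, and I would follow that route. After rescaling so that $\mu$ is a probability measure, set $\nu := \phi_*\mu$, a Borel probability measure on the algebraic variety $\mathbb{W}$. The first step is to note that $\nu$ is $\rho(S)$-invariant: for each $s\in S$, the $S$-equivariance of $\phi$ (that is, $\phi\circ s=\rho(s)\circ\phi$) together with $s_*\mu=\mu$ gives
$$\rho(s)_*\nu=\rho(s)_*\phi_*\mu=(\rho(s)\circ\phi)_*\mu=(\phi\circ s)_*\mu=\phi_*(s_*\mu)=\phi_*\mu=\nu.$$

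Next I would invoke the Borel Density Theorem in the form established by Shalom \cite[Thm 3.11]{shalom.discompact} (see also \cite[Thm 2.6]{dani.borel.density}): if a subgroup of $\mbox{Aut }\mathbb{W}$ with Zariski closure $\bar{S}$ preserves a Borel probability measure $\nu$ on the variety $\mathbb{W}$, then $\nu$ is supported on the Zariski-closed set $\mathrm{Fix}(\bar{S}_d):=\{w\in\mathbb{W}:\bar{S}_d.w=w\}$, where $\bar{S}_d$ is the discompact radical of $\bar{S}$ (Definition \ref{def.discompact.rad}). Applied to the subgroup $\rho(S)\leq\mbox{Aut }\mathbb{W}$ and the measure $\nu$ just constructed, this gives $\nu(\mathrm{Fix}(\bar{S}_d))=1$, hence $\mu(\phi^{-1}(\mathrm{Fix}(\bar{S}_d)))=1$. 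Unwinding the definition of $\mathrm{Fix}(\bar{S}_d)$, this is precisely the assertion that $\phi(x)$ is fixed by $\bar{S}_d$ for $\mu$-almost-every $x\in M$.

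Finally, for the reader's benefit I would recall in outline the two ingredients behind Shalom's theorem. By the Noetherian property of algebraic subgroups and the description following Definition \ref{def.discompact.rad}, the discompact radical $\bar{S}_d$ is generated by finitely many algebraic subgroups $H_1,\dots,H_r$, none having a nontrivial compact algebraic quotient; since $\mathrm{Fix}(\bar{S}_d)=\bigcap_i\mathrm{Fix}(H_i)$, it suffices to show that each $H=H_i$ fixes $\nu$-almost-every point. After reducing to an equivariant linear action of $H$ on a projective space $\mathbb{P}(U)$, one appeals to Furstenberg's lemma: if a sequence $(h_n)$ in $H$ leaves every compact set and $(h_n)_*\nu$ converges weakly to $\nu$, then $\nu$ is concentrated on a proper $H$-invariant subvariety. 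The hypothesis that $H$ has no nontrivial compact algebraic quotient produces such unbounded sequences unless $H$ already fixes $\nu$, and an induction on $\dim\mathbb{W}$ then forces $\nu$ to be supported on $\mathrm{Fix}(H)$. The delicate point—and what I expect to be the main obstacle—is that the stabilizer of a probability measure under an algebraic action need not be Zariski-closed, so the $\rho(S)$-invariance of the first step does not \emph{formally} pass to $\bar{S}$; it is the interaction of Furstenberg's lemma with the structure of the discompact radical that nevertheless forces $\nu$ to be $\bar{S}_d$-invariant. Everything else is the formal push-forward above.
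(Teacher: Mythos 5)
The paper does not prove this statement; it quotes it as Corollary 3.2 of \cite{bfm.zimemb}, which in turn derives it from Shalom's Theorem 3.11 by exactly the push-forward argument you give. Your proof is correct and is essentially the intended one: the equivariance computation $\rho(s)_*\nu=\nu$ for $\nu=\phi_*\mu$ is sound, and applying Shalom's form of Borel density to $\nu$ on $\mathbb{W}$ immediately gives $\nu(\mathrm{Fix}(\bar S_d))=1$, hence the almost-everywhere conclusion; you also correctly flag the genuine subtlety that the measure-preserving subgroup of $\mathrm{Aut}\,\mathbb{W}$ need not be Zariski-closed, which is precisely why the Furstenberg-contraction mechanism inside Shalom's theorem is needed rather than a naive passage from $\rho(S)$-invariance to $\bar S$-invariance.
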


\section{Embedding theorem}

The following theorem is the tractor generalization of Theorem 1.2 of
\cite{bfm.zimemb} (see also Theorem~4.1 there), restated here as in the introduction with an additional technical point in the conclusions. The proof will be a straightforward adaptation of the one in~\cite{bfm.zimemb}.

\begin{thm}
\label{thm.embedding}
Let
$\bigl(M, \hat{M},\omega\bigr)$ be a Cartan geometry modeled on $(\lieg,P)$ and $B \leq \Aut(M,\omega)$ a~Lie subgroup preserving a Borel probability measure $\mu$.
Let $\rho$ be a fully reducible representation of~$G$ on $\V$ with $\rho(P)$ Zariski closed, and $\mathcal{V} = \hat{M}
 \times_\rho \V$. Let $\mathcal{S}$ be the space of parallel sections for any $\Aut(M,\omega)$-invariant connection $\nabla$ on $\mathcal{V}$.

 Then for $\mu$-almost-every $x \in M$, for all $\hat{x} \in \hat{M}_x$,
there is an algebraic subgroup $\check{B}_{\hat{x}} \leq \rho(P)$ and an algebraic epimorphism $R\colon \check{B}_{\hat{x}} \rightarrow \bar{B}_d$ such that
\[ g. \iota_{\hat{x}}(X) = \iota_{\hat{x}} {( R(g).X)} \qquad \forall X \in \mathcal{S}, \ g \in \check{B}_{\hat{x}},\]
moreover, the isotropy of $B$ at $x$ with respect to $\hat{x}$ is contained in $\check{B}_{\hat{x}}$.
\end{thm}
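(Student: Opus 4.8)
The plan is to reduce Theorem \ref{thm.embedding} to the Borel Density Theorem (Theorem \ref{thm.borel.density}) by building an appropriate $B$-equivariant measurable map into an algebraic variety. First I would record the algebraic data: since $\rho(P)$ is Zariski closed and $\rho$ is fully reducible, the representation of $\rho(P)$ on the finite-dimensional space $\mathcal{S} \subseteq C^\infty(\hat{M},\V)^P$ (here we use that $\iota_{\hat x}$ is injective on parallel sections, so $\mathcal{S}$ embeds in $\V$) and on $\V$ are both algebraic. I would take as target variety $\mathbb{W}$ the variety of pairs $(\psi, A)$ where $\psi : \mathcal{S} \to \V$ is a linear injection and $A \leq \rho(P)$ is an algebraic subgroup, equipped with an algebraic epimorphism onto $\bar B_d$, subject to the intertwining relation $g.\psi(X) = \psi(R(g).X)$ — more concretely, one fixes the abstract group $\bar B_d$ (acting algebraically on $\mathcal{S}$ via the $B$-action pushed to $\GL(\mathcal{S})$ and Zariski-closed-then-discompact-radical) and lets $\mathbb{W}$ parametrize the possible $(\hat x, \check B_{\hat x}, R)$ data; the point is that $\Aut(M,\omega)$ acts algebraically on the relevant bundle over $\hat M$ since everything in sight is an associated bundle to the Cartan bundle by algebraic $P$-representations.

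The central construction is the equivariant map $\phi : M \dashrightarrow \mathbb{W}$. Over a point $x \in M$, a choice of $\hat x \in \hat M_x$ gives $\iota_{\hat x} : \mathcal{S} \to \V$; the isotropy homomorphism $\mathrm{Stab}_B(x) \to P$, $f \mapsto p$ with $f.\hat x = \hat x . p$, is defined whenever the $B$-orbit behaves nicely, and measurably one chooses a point in each orbit (a measurable section of $M \to B\backslash M$, or rather one works $B$-equivariantly by letting $B$ act on the fibered target). The key observation, exactly as in \cite{bfm.zimemb}, is that $f^* X = X \circ f^{-1}$ for $X \in \mathcal{S}$ (the action of $\Aut(M,\omega)$ on sections), so that for $f$ in the isotropy of $x$, evaluating at $\hat x$ gives $\iota_{\hat x}(f^*X) = \iota_{\hat x.p^{-1}}(X) = \rho(p).\iota_{\hat x}(X)$, i.e. $\rho$ of the isotropy already intertwines. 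Then $\phi(x)$ should record $\iota_{\hat x}$ together with the Zariski closure in $\rho(P)$ of the image of the isotropy — but since $B$ need not have large isotropy at $x$, the real content is that $B$ itself, acting on $M$ preserving $\mu$ and acting on $\mathbb{W}$ algebraically through its image in $\GL(\mathcal{S})$ and the bundle structure, forces $\phi(x)$ to be fixed by $\bar B_d$ almost everywhere; unwinding "fixed by $\bar B_d$" in $\mathbb{W}$ produces precisely an algebraic subgroup $\check B_{\hat x} \leq \rho(P)$ with an algebraic epimorphism $R$ onto $\bar B_d$ satisfying the displayed intertwining relation.

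To get the final clause — that the isotropy of $B$ at $x$ with respect to $\hat x$ lies in $\check B_{\hat x}$ — I would arrange the target variety $\mathbb{W}$ so that each point already carries the datum of (the image of) an isotropy-type subgroup of $\rho(P)$, and $\phi$ maps $x$ to the point recording the actual isotropy $\rho(\mathrm{Stab}_B(x))_{\hat x}$; the Borel Density conclusion then says this gets absorbed into the $\bar B_d$-fixed configuration, and reading off the stabilizer structure shows the isotropy subgroup sits inside the $\check B_{\hat x}$ that appears. Concretely, $\check B_{\hat x}$ can be taken as the Zariski closure in $\rho(P)$ of the group generated by the isotropy images together with whatever extra one-parameter subgroups the $\bar B_d$-fixedness forces; by construction it contains the isotropy, and the epimorphism $R$ is the one induced by the defining algebraic action. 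This mirrors the "moreover" statement in \cite[Thm 1.2, Thm 4.1]{bfm.zimemb} and requires no new idea beyond careful bookkeeping.

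The main obstacle I anticipate is purely the bookkeeping of algebraicity and measurability: verifying that the assignment $x \mapsto \phi(x)$ is genuinely $B$-equivariant and measurable (one must handle the choice of $\hat x$ in the fiber, which can be absorbed by passing to a bundle over $M$ or by noting $P$-equivariance, and the fact that $\mathcal{S}$ sits algebraically inside $\V$ via $\iota$, which uses uniqueness of parallel sections with given value), and that the target really is an algebraic variety with an algebraic $B$-action — this is where the hypotheses "$\rho$ fully reducible" and "$\rho(P)$ Zariski closed" are consumed, since they guarantee that the relevant Grassmannian-of-subgroups and space-of-intertwiners are algebraic and that $\bar B_d$ makes sense. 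Since \cite{bfm.zimemb} already carries out this argument for the adjoint tractor bundle ($\V = \lieg$) and for general $\V$ in their Thm 4.1, and the only difference here is replacing "infinitesimal automorphisms" by "$\nabla$-parallel sections" — a space on which $\iota_{\hat x}$ is still injective and on which $\Aut(M,\omega)$ still acts by $f^*X = X\circ f^{-1}$ — the adaptation is indeed straightforward, as the excerpt promises.
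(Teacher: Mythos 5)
Your proposal follows the same basic strategy as the paper --- build a $B$-equivariant, $P$-invariant measurable map from $M$ into an algebraic target and invoke the Borel Density Theorem \ref{thm.borel.density} --- and you correctly identify the evaluation map $\hat{x} \mapsto \iota_{\hat{x}}$, the intertwining relation for the isotropy, and the injectivity of $\iota_{\hat{x}}$ on parallel sections as the load-bearing facts. However, there are two places where your setup over-complicates the argument and one genuine technical omission.

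First, your target $\mathbb{W}$ --- a space of pairs consisting of a linear injection together with a subgroup-plus-epimorphism --- is substantially harder to realize as an algebraic variety than what the paper uses, which is simply $\mathbb{U} = \textrm{Mon}(\cS,\V)/P$, with $\Aut(M,\omega)$ acting by precomposition via $\psi$ and $P$ by postcomposition via $\rho$. The subgroup $\check{B}_{\hat{x}}$ is then defined after the fact, not encoded in the variety: once $\bar{\iota}(x)$ is $\bar{B}_d$-fixed, one simply sets $\check{B}_{\hat{x}} = \{ p \in \rho(P) : p.\iota_{\hat{x}} = \iota_{\hat{x}}.g \text{ for some } g \in \bar{B}_d \}$, which is visibly algebraic and comes with the projection $R$ to $\bar{B}_d$. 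Your plan to ``bake in'' the isotropy data is likewise unnecessary: the relation $\iota_{\hat{x}}(g^*X) = p.\iota_{\hat{x}}(X)$, which you already state, shows directly that the isotropy of any $g \in B$ fixing $x$ satisfies the defining condition of $\check{B}_{\hat{x}}$ (since $\psi(g) \in \bar{B}_d$, as $\psi(B) \subseteq \bar{B}_d$ up to compact quotient issues absorbed in the discompact-radical formalism), so the ``moreover'' clause is immediate.

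Second, and this is the real gap: $\textrm{Mon}(\cS,\V)/P$ (and a fortiori your more elaborate $\mathbb{W}$) is not in general an algebraic variety, only a constructible set, so the Borel Density Theorem cannot be applied to it directly. The paper resolves this with the Rosenlicht stratification $\mathbb{U} = \mathbb{U}_0 \cup \cdots \cup \mathbb{U}_r$ into $\Aut(M,\omega)$-invariant varieties, then decomposes $\mu$ along the strata and applies Borel Density on each piece. Your proposal does not address this; the phrase ``guarantee that the relevant Grassmannian-of-subgroups and space-of-intertwiners are algebraic'' is not enough, because the obstruction lies in the $P$-quotient, not in the intertwiner space itself. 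Without this stratification step, the argument does not go through as stated.
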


Recall that $\bar {B}_d$ denotes the discompact radical of the Zariski closure of
 the image of $B$ in~$\GL(\cS)$.

 \begin{proof}
Denote by $\psi\colon \Aut(M,\omega) \rightarrow \textrm{GL}(\cS)$ the representation corresponding to the restriction to $\mathcal{S}$ of the action on $\Gamma(\mathcal{V})$ as in Section~\ref{sec.aut.tractor}.
Let
\begin{align*}
\iota \colon \ \hat{M} \rightarrow \textrm{Mon}(\cS, \V), \qquad
 \hat{x} \mapsto \iota_{\hat{x}},
\end{align*}
where $ \textrm{Mon}(\cS, \V)$ is the variety of all linear injections
$\cS \rightarrow \V$. Each $g \in \Aut(M,\omega)$ acts on $\textrm{Mon}(\cS,\V)$ by pre-composition with $\psi(g)$, an algebraic automorphism, while each $p \in P$ acts by post-composition with $\rho(p)$, also an algebraic automorphism. These two actions commute.

If $g \in \Aut(M,\omega)$ stabilizes $x$ and has isotropy with respect to $\hat{x}$ equal to $p \in P$, then
\begin{equation}
\label{eqn.isotropy.in.bcheck}
\iota_{\hat{x}}(g^*X) = {p.\iota_{\hat{x}}(X)}.
\end{equation}

Let $\mathbb{U} = \mbox{Mon}(\cS, \V)/P$ and $\bar{\iota}\colon M \rightarrow \mathbb{U}$ the induced map, which is also $\Aut(M,\omega)$-equivariant.
Note that if $g \in \Aut(M,\omega)$ stabilizes $x$, then $\psi(g)$ stablizes $\bar{\iota}(x)$.
Although $\mathbb{U}$ is not generally an
algebraic variety, the Rosenlicht stratification
\cite{rosenlicht.brasil} (see also \cite[Section~2.2]{gromov.rgs}) yields a
decomposition
\[\mathbb{U} = \mathbb{U}_0 \cup \cdots \cup \mathbb{U}_r\]
in which
each $\mathbb{U}_i$ is a variety on which $\Aut(M,\omega)$ acts via $\psi$ by automorphisms
(see \cite[Theorem~4.1]{bfm.zimemb} for more details).
The sets $M_i = \bar{\iota}^{\,-1}(\mathbb{U}_i)$ are
$B$-invariant and measurable for each $i=0, \dots, r$. For each $i$ such that $\mu_i = \left. \mu \right|_{M_i}$ is not zero, it is a nontrivial finite,
$B$-invariant measure.

By Theorem~\ref{thm.borel.density}, for $\mu_i$-almost-every $x\in M_i$ the point $\bar{\iota}(x)$ is fixed by
$\bar B_d$.
Let $F_i$ comprise the $\bar{B}_d$-fixed points in
$\mathbb{U}_i$, and set $\Omega_i = (\bar{\iota} \circ \pi)^{-1}(F_i)$, a $P$-invariant subset
of $\hat{M}$ with ${\mu_i(M_i \backslash \pi(\Omega_i)) = 0}$.
For any $\hat{x} \in \Omega_i$, the orbits
\[ \iota(\hat{x}).\bar B_d \subset P.\iota(\hat{x}).\]
If we now set
\[\check{B}_{\hat{x}}=\bigl\{p\in \rho(P)\mid p.\iota(\hat{x})=\iota(\hat{x}).g \textrm{ for some } g\in\bar B_d\bigr\},\]
then $\check{B}_{\hat{x}}$ is an algebraic subgroup of $\rho(P)$ satisfying the conclusions of
the theorem for $\hat{x}$; in particular, it contains the isotropy of $B$ with respect to $\hat{x}$ by (\ref{eqn.isotropy.in.bcheck}). Let $\Omega = \cup_i \Omega_i$, a full measure set for $\mu$ comprising points that all satisfy the conclusions of the theorem.
\end{proof}

\begin{rem}
The conclusions of the theorem clearly hold when $\mathcal{S}$ is replaced by any $B$-invariant subspace of $\mathcal{S}$.
\end{rem}

For future use, we make the following general definition.

\begin{defin}
\label{def.rep.embedding}
Let $\psi\colon B \rightarrow \GL({\W})$ and $\rho \colon P \rightarrow \GL({\V})$ be two representations, with $\rho(P)$ algebraic. An \emph{embedding of $({\W}, \psi(B))$ into $({\V}, \rho(P))$} comprises a linear injection $\iota\colon {\W} \rightarrow {\V}$ together with an algebraic subgroup $\check{B} \leq \rho(P)$ and an algebraic epimorphism $R\colon \check{B} \rightarrow \bar{B}_d$ such~that
\[ g. \iota(X) = \iota( R(g).X) \qquad \forall X \in {\bf W}, \, g \in \check{B}.\]
\end{defin}

As before, $\bar{B}_d$ denotes the discompact radical of the Zariski closure of $\psi(B)$.
Note that whenever there is a Lie group monomorphism $H \rightarrow G$ for which $\Ad G$ is algebraic, then there is an embedding of $(\lieh, \Ad H)$ into $(\g, \Ad G)$.

\begin{rem}
Under the hypotheses of Theorem~\ref{thm.embedding}, let
$K \colon \hat{M} \rightarrow \mathbb{W}$ be an $\Aut(M,\omega)$-invariant, $P$-equivariant map to a variety $\mathbb{W}$ on
 which $P$ acts algebraically.
 In the proof of the theorem, we can take
\[\mathbb{U} =\textrm{Mon}(\cS, \V)\times \mathbb{W}\]
with $\Aut(M,\omega)$ acting by precomposition via $\psi$ on the first factor and trivially on the second factor, commuting with the $P$-action on the product.
Considering the map $\iota \times K \colon \hat{M}\rightarrow \mathbb{U}$ given
by $\hat{x} \mapsto (\iota_{\hat{x}}, K(\hat{x}))$, the same proof with the Borel density theorem as above yields, for almost-every $x \in M$, not only the embedding of $(\mathcal{S}, \psi(B))$ into $({\bf V}, \rho(P))$ but also that $K(\hat{x})$ is fixed by $\check{B}_{\hat{x}}$; the conclusion that $\check{B}_{\hat{x}}$ contains the isotropy of $B$ at $x$ with respect to $\hat{x}$ moreover remains true.
\end{rem}

A useful choice of $\mathbb{W}$ in the remark above is, for $m \in {\bf N}$, the $P$-module in which the total
order-$m$ fundamental derivative of the Cartan curvature
\[D^m \kappa\colon \ \hat{x} \mapsto \bigl( \kappa(\hat{x}), D \kappa(\hat{x}), \dots, D^{(m)} \kappa(\hat{x}) \bigr)\]
takes its
values (see Sections~\ref{sec.tractor.connxns} and \ref{sec.aut.tractor}). Fixed points of $P$ in this module can give rise to local automorphisms
by the Frobenius theorem for Cartan geometries. More precisely, for~$\bigl(M,\hat{M}, \omega\bigr)$ real-analytic and $M$ closed, there is $m \geq 0$ such that for any $\hat{x} \in \hat{M}$, any $p \in P$ fixing $D^m \kappa(\hat{x})$ is the isotropy at $x = \pi(\hat{x})$ with respect to $\hat{x}$ of a unique $g \in \Aut^{\rm loc}(M,\omega)$ fixing~$x$ (where uniqueness applies to any two such local automorphisms defined in the same neighborhood of~$x$) \cite{me.frobenius}, see also \cite{pecastaing.frobenius} for a smooth version. The following corollary of the embedding theorem is well known in the setting of infinitesimal automorphisms (see \cite[Theorem~5.3]{me.frobenius}).

\begin{cor}
\label{cor.emb.frob}
Let
$\bigl(M, \hat{M},\omega\bigr)$ be a real-analytic Cartan geometry modeled on $(\lieg,P)$ and $B \leq \Aut(M,\omega)$ a Lie subgroup preserving a Borel probability measure $\mu$.
Let $\rho$ be a fully reducible, real-analytic representation of $G$ on $\V$ with $\rho(P)$ Zariski closed, and $\mathcal{V} = \hat{M}
 \times_\rho \V$. Let $\mathcal{S}$ be the space of parallel sections for any $\Aut(M,\omega)$-invariant connection $\nabla$ on $\mathcal{V}$.

 Then for $\mu$-almost-every $x \in M$, for all $\hat{x} \in \hat{M}_x$, in addition to the conclusions of Theorem~$\ref{thm.embedding}$, every $g \in \check{B}_{\hat{x}}$ is the isotropy with respect to $\hat{x}$ of an element of $\Aut^{{\rm loc}}(M,\omega)$ fixing~$x$.
\end{cor}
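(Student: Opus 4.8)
The plan is to combine Theorem~\ref{thm.embedding} with the Remark immediately preceding, applied to a well-chosen auxiliary $P$-variety $\mathbb{W}$, and then to invoke the Frobenius Theorem for Cartan geometries of \cite{me.frobenius}. Concretely, since $(M,\hat M,\omega)$ is real-analytic and $M$ is closed, fix $m \geq 0$ as in the statement recalled above, so that for every $\hat x \in \hat M$, any $p \in P$ fixing $D^m\kappa(\hat x)$ is the isotropy at $x = \pi(\hat x)$ with respect to $\hat x$ of a (unique, up to local coincidence) local automorphism fixing $x$. Take $\mathbb{W}$ to be the $P$-module in which $D^m\kappa$ takes its values, and take $K = D^m\kappa : \hat M \to \mathbb{W}$; this map is $P$-equivariant and $\Aut(M,\omega)$-invariant because $\omega$, hence $\kappa$, hence all its fundamental derivatives, are invariant under automorphisms. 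The $P$-action on $\mathbb{W}$ is algebraic since it is a finite-dimensional representation restricted to $\rho(P)$-like data; more precisely $\mathbb{W}$ is a $P$-submodule of a tensor power construction on which $P$ acts via a rational representation (this uses $\rho(P)$ algebraic only in that we already assumed $P$ acts algebraically on all the relevant auxiliary modules in the Remark — here the relevant fact is just that $P$ acts algebraically on the fundamental-derivative module, which holds for any Cartan geometry).

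Next I would apply the Remark verbatim with this choice of $\mathbb{W}$ and $K$. The Remark yields a full-measure set of $x \in M$ such that for all $\hat x \in \hat M_x$, there is an algebraic subgroup $\check B_{\hat x} \leq \rho(P)$ and an algebraic epimorphism $R : \check B_{\hat x} \to \bar B_d$ realizing the embedding of $(\mathcal{S}, \psi(B))$ into $(\mathbf{V}, \rho(P))$, \emph{and} moreover every element of $\check B_{\hat x}$ fixes $K(\hat x) = D^m\kappa(\hat x)$. These are exactly the conclusions of Theorem~\ref{thm.embedding} together with the extra statement that $\check B_{\hat x}$ fixes $D^m\kappa(\hat x)$. (One subtlety: $\check B_{\hat x}$ in the Remark is a priori an algebraic subgroup of $\rho(P)$ rather than literally of $P$; but the Frobenius input is phrased for $p \in P$, so I would either identify $\check B_{\hat x}$ with its preimage in $P$ — which is legitimate since the isotropy lives in $P$ and $\rho$ restricted to the isotropy subgroup is what matters — or simply note that fixing $D^m\kappa(\hat x)$ is a condition on the $P$-action that factors through $\rho(P)$ insofar as it is needed, as in the existing infinitesimal-automorphism version \cite[Thm 5.3]{me.frobenius}.)

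Finally, for such $x$ and $\hat x$, and for each $g \in \check B_{\hat x}$ (viewed in $P$), the chosen $m$ and the Frobenius Theorem for Cartan geometries give a local automorphism $\varphi \in \Aut^{loc}(M,\omega)$ defined near $x$, fixing $x$, whose isotropy at $x$ with respect to $\hat x$ is exactly $g$. That is precisely the assertion of the corollary. So the proof is: (1) choose $\mathbb{W} = $ the order-$m$ fundamental-derivative module and $K = D^m\kappa$; (2) apply the Remark to get, on a full-measure set, an embedding with $\check B_{\hat x}$ fixing $D^m\kappa(\hat x)$ and containing the isotropy of $B$; (3) invoke \cite{me.frobenius} to promote each element of $\check B_{\hat x}$ to the isotropy of a local automorphism fixing $x$.

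The main obstacle — really the only point requiring care rather than bookkeeping — is verifying that the order-$m$ fundamental-derivative module $\mathbb{W}$ is a variety with an algebraic $P$-action compatible with the Remark's hypotheses, and that $D^m\kappa$ is genuinely $P$-equivariant with the right target; this is standard for Cartan geometries (the fundamental derivative of a $P$-equivariant function valued in a $P$-module is again $P$-equivariant into an associated module, and iterating produces a $P$-module which is a rational $P$-representation), but it should be spelled out, perhaps by citing the corresponding discussion in \cite{me.frobenius} and \cite{bfm.zimemb}. A secondary point is the $\check B_{\hat x} \leq \rho(P)$ versus $\leq P$ discrepancy noted above, which is resolved exactly as in the cited infinitesimal-automorphism version and does not affect the conclusion.
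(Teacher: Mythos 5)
Your proof is correct and matches the argument the paper sketches in the paragraph immediately preceding the corollary: take $\mathbb{W}$ to be the target $P$-module of $D^m\kappa$ (with $m$ from the Frobenius Theorem for real-analytic Cartan geometries with $M$ closed), feed $K = D^m\kappa$ into the Remark to get that $\check{B}_{\hat x}$ fixes $D^m\kappa(\hat x)$ on a full-measure set, and then apply \cite{me.frobenius} to promote each such fixing element to the isotropy of a local automorphism at $x$. Your aside on $\check B_{\hat x}\leq\rho(P)$ versus $\leq P$ correctly flags the same mild abuse the paper itself makes, and your proof is essentially identical in structure to the intended one.
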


Parallel tractor sections, like infinitesimal automorphisms, correspond to solutions of overdetermined PDEs, and a Frobenius theorem in terms of fundamental derivatives of the tractor curvature is possible. We have worked out such a result, however, we have not seen how to profitably combine it with the output of the embedding theorem, so it is not included in the present paper.

\begin{rem}
\label{rem.conjugate.embedding}
Let $\hat{x}$ be as in the conclusion of Corollary \ref{cor.emb.frob}. The statement says the conclusions hold for $\hat{x}.g$ for all $g \in P$, as well. In this case, the isotropy of $B$ with respect to $\hat{x}.g$ is that with respect to $\hat{x}$, conjugated by $g^{-1}$. We also have
\[D^m \kappa(\hat{x}.g) = g^{-1}. D^m \kappa(\hat{x}) \qquad \mbox{and} \qquad \check{B}_{\hat{x}.g} = g^{-1} \check{B}_{\hat{x}} g.\]
The group $\check{B}_{\hat{x}}$ will thus be conjugated in $P$ as needed below, corresponding to choosing a~different~${\hat{x} \in \hat{M}_x}$, without altering the properties ensured by the embedding theorem.
\end{rem}

\section{Application in conformal geometry: Algebraic part}

The remainder of the paper is devoted to the proof of Theorem~\ref{thm.conf.application}, which is divided into three parts. The first two parts are Lie-algebraic, while the shorter, third part has the application of our embedding theorem to the conformal tractor bundle and the conclusion of conformal flatness. In the first part, we prove the upper bound, namely, the following proposition. Recall the notion of embedding between representations from Definition~\ref{def.rep.embedding}.

\begin{prop}
\label{prop.sutoso.ineq}
Let $H$ be locally isomorphic to $\SU(p',q')$ for $0 < p' < q'$, with Borel subgroup~$B_H$. Let $\lieg = \so(p+1,q+1)$ with $p \leq q$, and $P < G$ the stabilizer of an isotropic line of $\BR^{p+1,q+1}$. Suppose there is an embedding of~$(\lieh, {\rm Ad}_\h B_H)$ in $(\lieg, {\rm Ad}_\g P)$ with $\operatorname{ad}(\iota(\lieh) \cap \p ) \subseteq \check{\b}$. Then $p \geq 2 p'-1$.
\end{prop}

In Section~\ref{subsec.alignment}, we ensure that the maximal $\BR$-split tori of ${\rm Ad}_{\h} B_H$ and ${\rm Ad}_\g P$ are aligned under the projection $R$ given by Theorem~\ref{thm.embedding}. In Section~\ref{subsec.consequences.stab.iso}, we analyze $\iota\colon \h \rightarrow \g$ by studying its restriction to $\h^0 = \iota^{-1}(\p)$, which is a Lie algebra homomorphism (see~(\ref{eqn.iota.in.p})), and the induced $\bar{\iota}$ from $\h/\h^0$ to $\g/\p$, which are compared as representations via the epimorphism $R$. Key conclusions, recorded in Lemma~\ref{lem.h.mod.h0}, are bounds in terms of $p'$ on maximal isotropic subspaces in $\h/\h_0$, for a class of scalar products induced by the embedding. Based on these, we prove Proposition~\ref{prop.sutoso.ineq} in Section~\ref{subsec.proof.ineq}.

The alignment of $\BR$-split tori from Section~\ref{subsec.alignment} gives rise to a homomorphism $\theta$ between the duals of the their Lie algebras, restricting to a function from the roots of $\g$ to the roots of $\h$. When $p=2p'-1$, then $\theta$ is essentially determined and is 2-to-1 between these sets of roots. The features of $\theta$ are recorded in Proposition~\ref{prop.sutoso.roots}, which is proved in Section~\ref{subsec.root.correspondences}, completing the algebraic part of the proof of Theorem~\ref{thm.conf.application}.

\subsection{Preliminaries: Root systems and notation}

We set $G = {\rm O}(p+1,q+1)$ and $H$ a group locally isomorphic to ${\rm SU}(p',q')$, with $p\leq q$ and~${p' < q'}$, as in Theorem~\ref{thm.conf.application} and Proposition~\ref{prop.sutoso.ineq}. It is always assumed $p, p' > 0$.

\begin{rem}
If $p<q$, then the root system of $\lieo(p+1,q+1)$ is of type $B_{p+1}$, while if $p=q$, then it is type $D_{p+1}$. If $p'<q'$, then the root system of $\mathfrak{su}(p',q')$ is of type $BC_{p'}$, while if $p'=q'$, then it is of type $C_{p'}$. The proof of Proposition~\ref{prop.sutoso.roots} involves careful analysis of how the roots of~$BC_{p'}$ can correspond with those of~$B_{p+1}$ or $D_{p+1}$ under the embedding. It is not valid for~$C_{p'}$, thus we always assume $p' < q'$.
\end{rem}

If $p<q$, let $\beta_1, \dots, \beta_{p+1} \in \a^*$ be such that the positive roots of $\lieg$ are
\[ \Delta^+_G = \{ \beta_k \} \cup \{ \beta_i - \beta_j \mid i < j \} \cup \{ \beta_i + \beta_j \mid i \neq j\}.\]
The root system is $B_{p+1}$.
The set $\{ \beta_k \}$ are called the positive short roots, and their root spaces are each of dimension $q-p$. The remaining two subsets are the long roots, such that the $\{ \beta_i - \beta_j \}$ are the \emph{initial roots} of a string of positive roots, while the $\{ \beta_i + \beta_j \}$ are the \emph{terminal roots} of a string of positive roots. Either of these types of roots have $1$-dimensional root spaces. These strings are such that for any $X \in \g_{\beta_i - \beta_j}$ and $Y \in \g_{\beta_j}$ both nonzero, the brackets $[X,Y] \in \g_{\beta_i}$ and $[[X,Y],Y] \in \g_{\beta_i + \beta_j}$ are nonzero. Also for nonzero $X \in \g_{\beta_i - \beta_j}$ and~$Y \in \g_{\beta_j + \beta_k}$, the bracket~$[X,Y]$ spans $\g_{\beta_i + \beta_k}$.
 The weights of the $P$-representation on $\lieg/ \p$ are
\[\{ \beta_i - \beta_1, - \beta_1, - \beta_1 - \beta_i \mid i =2, \dots, p+1 \},\]
in particular, the conformal factor is given by $- 2 \beta_1$.

If $p=q$, we choose similarly $\beta_1, \dots, \beta_{p+1} \in \a^*$ such that the positive roots of $\lieg$ are
\[ \Delta^+_G = \{ \beta_i - \beta_j \mid i < j \} \cup \{ \beta_i + \beta_j \mid i \neq j\}.\]
The root system is $D_{p+1}$. All these positive roots are of the same length and have $1$-dimensional root spaces. The weights of the representation $\lieg/ \p$ are the same as in the case $p<q$, except that $-\beta_1$ is not a weight; the conformal factor is still $- 2 \beta_1$.

We choose a set of short roots $\alpha_1, \dots, \alpha_{p'} \in \Delta_H$ such that the positive roots of $\lieh$ are
\[ \Delta_H^+ = \{ \alpha_{k'} \} \cup \{ \alpha_{i'} - \alpha_{j'} \mid i' < j' \} \cup \{ \alpha_{i'} + \alpha_{j'} \} .\]
This is the root system $BC_{p'}$. The root spaces for the short roots $\alpha_{k'}$ are of dimension $2(q'-p')$. The root spaces for $\alpha_{i'} - \alpha_{j'}$ or $\alpha_{i'} + \alpha_{j'}$ with $i' \neq j'$ are $2$-dimensional, while those for $2 \alpha_{i'}$ are $1$-dimensional.

For the first application of the embedding theorem, we consider a Borel subgroup $B_H < H$. Note that $B_H$ is solvable, algebraic, and discompact. As $B_H$ is amenable and $M$ is compact, there is a $B_H$-invariant probability measure $\mu$ on $M$; see \cite[Definition~4.1.1 and Corollary~4.1.7]{zimmer.etsg}.
Let $\check{B} = \check{B}_{\hat{x}} < {\rm Ad}_\lieg P$ be the algebraic subgroup given by Theorem~\ref{thm.embedding} applied to $B_H$ acting on the adjoint tractor bundle endowed with \v{C}ap's connection as in Section~\ref{sec.intro.tractor}. The parallel sections correspond to infinitesimal automorphisms, and we restrict to $\mathcal{S} = \lieh$. Note that this version of the embedding theorem is exactly the one of \cite{bfm.zimemb}.

\subsection{Alignment of the intertwining epimorphism}
\label{subsec.alignment}

Recall that the parabolic subgroup $P < \SO(p+1,q+1)$ is the stabilizer of an isotropic line in~$\BR^{p+1,q+1}$. A~\emph{null-orthonormal basis} of $\BR^{p+1,q+1}$ is one with respect to which the quadratic form has the form, writing $n=p+q$,
\[q(x_1, \dots, x_{n+2}) = 2 \sum_{i=1}^{p+1} x_i x_{n+3-i} + \sum_{i=p+2}^{q} x_i^2.\]

The Lie algebra $\check{\b}$ is a subalgebra of $\p$, with adjoint action preserving the image $\iota(\lieh)$, such that the restriction to this image is a surjection onto $\iota \circ \operatorname{ad}_\lieh \b_H \circ \iota^{-1} \cong \b_H$.
We will denote by~$r$ the resulting Lie algebra epimorphism $\check{\b} \rightarrow \b_H$.
We can focus on the solvable radical $\check{\b}^{\rm solv}$, as any semisimple Levi factor is in the kernel of $r$. Because it is algebraic, $\check{\b}^{\rm solv}$ is invariant under Jordan decomposition (see \cite[Theorem~4.3.3]{morris.ratners.thms.book}). Since ${\rm Ad}_\h B_H$ has no elliptic elements, any elliptic elements of $\check{\b}^{\rm solv}$ are in the kernel of $r$; more precisely, they belong to a compact torus of $\ker r$, which is a direct factor of $\check{\b}^{\rm solv}$ (see \cite[Theorem~4.4.7 and Corollary 4.4.12]{morris.ratners.thms.book}), so we will simply exclude this factor and assume that all elements of $\check{\b}^{\rm solv}$ have trivial elliptic components in their Jordan decomposition.

\begin{lem}
\label{lem.solvable.in.p}
Let a null-orthonormal basis of $\BR^{p+1,q+1}$ in which $P$ is block-upper-triangular
be given. A solvable, algebraic
subgroup $\check{B}^{\rm solv} < P$ containing no elliptic elements is conjugate in~$P$ to a group of upper triangular matrices.
A maximal $\BR$-split torus $A_G < G$ can be chosen to be the maximal diagonal subgroup of $P$ in the resulting null-orthonormal basis.
\end{lem}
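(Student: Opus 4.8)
Write $L_0 \subset \BR^{p+1,q+1}$ for the isotropic line whose stabilizer is $P$. The plan is to prove the statement as a real-field, parabolic-adapted version of the Lie--Kolchin theorem, by induction on $\dim V = p+q+2$. The preliminary reductions carried out in the text above leave us with $\check B^{solv}$ connected, solvable, algebraic, and with all elements having trivial elliptic Jordan component; such a group is $\BR$-split solvable --- its maximal torus has no $\BR$-anisotropic subtorus, hence is $\BR$-split, and $\check B^{solv} = T_s \ltimes R_u(\check B^{solv})$ --- so it is $\BR$-triangularizable in every finite-dimensional $\BR$-representation, and in particular it fixes a line in every subquotient of any such representation.

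The key geometric input, which is where the hypothesis ``no elliptic elements'' really bites, is: \emph{a nontrivial connected $\BR$-split solvable subgroup $\check B$ of $\mbox{O}(W)$, for $W$ carrying a nondegenerate form of indefinite signature, preserves an isotropic line.} To see this, pick a $\check B$-invariant line $\ell_1 \subseteq W$; if it is anisotropic, then $\ell_1^\perp$ is a nondegenerate $\check B$-invariant subspace and we may repeat. Were no invariant isotropic line ever to appear, we would get a $\check B$-invariant orthogonal decomposition $W = \ell_1 \perp \cdots \perp \ell_{\dim W}$ into anisotropic lines, so that $\check B$ is simultaneously diagonalizable over $\BR$ in an orthonormal basis; but a diagonal matrix preserving an anisotropic line multiplies the norm of its generator by the square of the corresponding entry, so orthogonality forces each entry to be $\pm 1$, whence $\check B$ is finite, and being connected, trivial --- a contradiction.

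For the inductive step, $\check B^{solv} \subseteq P$ already preserves $L_0$ and $L_0^\perp$, so it acts on $\bar W := L_0^\perp/L_0$, which has dimension $\dim V - 2$ and carries a nondegenerate form of signature $(p,q)$; the Levi factor $G_0 < P$ surjects onto $\mbox{O}(\bar W)$ with kernel the central $\BR^\times$, and the image $\bar B \subseteq \mbox{O}(\bar W)$ of $\check B^{solv}$ is again connected $\BR$-split solvable. If $\bar B$ is trivial (in particular if the form on $\bar W$ is definite, so $\mbox{O}(\bar W)$ is compact), then $\check B^{solv}$ lies in the kernel $\BR^\times \ltimes P_+$ of $P \to \mbox{O}(\bar W)$, which is already upper triangular in the given null-orthonormal basis, and we are done. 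Otherwise $\bar W$ is indefinite, the geometric claim gives a $\bar B$-invariant isotropic line of $\bar W$, and conjugating by an element of $\mbox{O}(\bar W) \subseteq G_0 \subseteq P$ (transitive on isotropic lines) we may take it to be the first vector of a null-orthonormal basis of $\bar W$ and then invoke the inductive hypothesis inside the stabilizer of that line. Either way we obtain an element of $G_0 \subseteq P$ conjugating $\check B^{solv}$ so that $\bar B$ is upper triangular for a suitable null-orthonormal basis of $\bar W$; lifting that to a null-orthonormal basis of $V$ refining $0 \subset L_0 \subset L_0^\perp \subset V$, the conjugated $\check B^{solv}$ lies in the group of $p \in P$ whose induced action on $\bar W$ is upper triangular, which --- since $P_+$ acts trivially on each of $L_0$, $\bar W$, $V/L_0^\perp$ --- is exactly the group of upper-triangular matrices of $V$ in that basis. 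All conjugations used lie in $P \subseteq \mbox{O}(V)$, so the resulting basis is again null-orthonormal and $P$ is again block-upper-triangular in it; its maximal connected diagonal subgroup is then a split torus of dimension $p+1$, the $\BR$-rank of $G$, hence a maximal $\BR$-split torus of $G$, which we take as $A_G$. This proves both assertions.

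The main obstacle is the geometric claim --- extracting an invariant \emph{isotropic} line from the absence of elliptic elements --- since it is what ultimately forces the $\check B^{solv}$-invariant complete flag to have the same $\mbox{O}(V)$-orbit type as the standard one, so that the conjugation can be realized inside $P$ and not merely inside $G$. Once that is reduced to the remark that an $\BR$-diagonalizable orthogonal group is finite, the remaining work is bookkeeping: checking that the base cases ($\bar W$ definite, or $\bar B$ trivial) are absorbed correctly, that the lifts of conjugations from $\bar W$ to $V$ can be chosen inside $G_0$, and that the Jordan-decomposition reductions made in the text genuinely supply the connected $\BR$-split solvable group that the Lie--Kolchin input requires.
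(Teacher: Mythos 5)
Your proposal is essentially the paper's argument, repackaged as a formal induction on $\dim V$ with the key step isolated as a ``geometric claim.'' The paper also passes to the quotient $\ell_0^\perp/\ell_0 \cong V^{p,q}$, applies Lie's theorem over $\BR$ to find a simultaneous eigenvector, uses the ``no elliptic elements'' hypothesis to exclude a $2$-dimensional irreducible piece, handles an anisotropic eigenvector by observing that the eigenvalue must be $\pm 1$ (up to conformal factor) and passing to its orthogonal complement, and iterates until an isotropic eigenvector appears or the action trivializes---the same content as your induction plus geometric claim. One small imprecision worth flagging: the induced action on $\bar W = L_0^\perp/L_0$ lands in $\mbox{CO}(\bar W)$, not $\mbox{O}(\bar W)$ (the Levi factor $G_0$ includes the conformal scalar), so your geometric claim and the invocation of the inductive hypothesis should be phrased for $\mbox{CO}(\bar W)$; since the scalar factor is diagonal and does not affect which flags are preserved, this is cosmetic, but as written your statement about diagonal orthogonal matrices having entries $\pm 1$ applies only to the $\mbox{O}$-part, and your ``$\bar B$ trivial in the definite case'' should be ``the $\mbox{O}$-part of $\bar B$ is trivial.'' The paper sidesteps the bookkeeping by working iteratively within $\mbox{CO}(p,q)$ rather than citing the lemma at lower dimension.
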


\begin{proof}
Recall that $P$ is the stabilizer of a certain isotropic line, which we will denote $\ell_0$. The quotient $\ell_0^\perp/\ell_0 = V^{p,q}$ inherits a conformal class of
scalar products of signature $(p,q)$. On $V^{p,q}$ the solvable group $\check{B}^{\rm solv}$ descends to preserve this conformal class and $P$ descends to the full linear conformal group ${\rm CO}(p,q)$. The claim of the lemma is equivalent to the assertion that
a~solvable, algebraic subgroup of ${\rm CO}(p,q)$ without elliptic elements is conjugate in ${\rm CO}(p,q)$ to a~group of upper-triangular matrices.

Lie's theorem ensures that in $V^{p,q}$ there is a simultaneous eigenvector $v_1$ or a $2$-dimensional irreducible subspace for $\check{B}^{\rm solv}$. In the latter case, the restriction of $\check{B}^{\rm solv}$ splits into a nontrivial elliptic and a scalar action. By the assumption that all elements of $\check{B}^{\rm solv}$ have trivial elliptic Jordan components, this possibility is excluded. If the eigenvector $v_1$ is timelike or spacelike, then the eigenvalues on it are $\pm 1$. In this case,
apply Lie's theorem again to $v_1^\perp$, to conclude that it contains an isotropic simultaneous eigenvector, or $\check{\b}^{\rm solv}$ is trivial on $V^{p,q}$. Proceeding inductively yields a null-orthonormal basis in which $\check{B}^{\rm solv}$ is upper-triangular. It follows that on $V^{p,q}$ the conjugacy of $\check{B}^{\rm solv}$ to upper-triangular matrices can be achieved in ${\rm CO}(p,q)$. Consequently, on~$\BR^{p+1,q+1}$, the conjugacy can be achieved in $P$, since $\check{B}^{\rm solv}$ preserves $\ell_0$, and we can take the last basis vector to be a null vector not orthogonal to $\ell_0$.

In any null-orthonormal basis in which $P$ is block-upper-triangular -- that is, in which $P$ is the stabilizer of the line spanned by the first element of the basis -- the maximal diagonal subgroup is a maximal $\BR$-split torus of $G$. In particular, such a diagonal subgroup in the basis we have found here can be taken to be $A_G$.
\end{proof}

As explained in Remark \ref{rem.conjugate.embedding}, the group $\check{B}^{\rm solv}$ can be replaced with the $P$-conjugate given by the above lemma, and all conclusions of the embedding theorem continue to hold.

 \begin{lem}
After conjugating $\check{B}^{\rm solv}$ by a suitable element of $P$, there is a subalgebra $ \check{\a} < \check{\b}^{\rm solv} \cap \a_G$ mapping isomorphically under $r$ onto $\a_H$, and this conjugation preserves the properties of~$\check{B}^{\rm solv}$ achieved in Lemma~$\ref{lem.solvable.in.p}$.
 \end{lem}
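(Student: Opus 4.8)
The plan is to realize $\check{\a}$ inside a maximal $\BR$-split torus of $\check{B}^{solv}$, after conjugating that torus into $A_G$, and then to identify its $r$-image with $\a_H$ by exploiting that $r$ respects Jordan decomposition.

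Write $\check{B}^{solv} = \check{S} \ltimes \check{U}$ with $\check{U} = R_u(\check{B}^{solv})$ its unipotent radical and $\check{S}$ a maximal torus; since $\check{B}^{solv}$ has no elliptic elements, $\check{S}$ is $\BR$-split. Let $Q < P$ be the identity component of the group of $P$-elements that are upper-triangular in the null-orthonormal basis of Lemma \ref{lem.solvable.in.p}: it is a connected solvable $\BR$-group with $\check{B}^{solv} < Q$ and $R_u(Q) < P$, all of whose maximal tori are $\BR$-split (an upper-triangular semisimple matrix is $\BR$-diagonalizable) and $Q$-conjugate to the maximal diagonal subgroup $A_G$. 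First I would enlarge $\check{S}$ to a maximal torus $\check{S}'$ of $Q$---possible because the centralizer $Z_Q(\check{S})$ is connected solvable, its maximal tori are maximal in $Q$, and they contain the central subtorus $\check{S}$---and then invoke conjugacy of maximal $\BR$-split tori of $Q$ to produce $u \in R_u(Q)(\BR) < P$ with $u\check{S}'u^{-1} = A_G$, hence $u\check{S}u^{-1} < A_G$. Passing to $u\check{B}^{solv}u^{-1}$ is conjugation by an upper-triangular element of $P$, so it preserves every conclusion of Lemma \ref{lem.solvable.in.p}; afterwards $\check{\t} := \mathrm{Lie}(\check{S}) \subseteq \check{\b}^{solv} \cap \a_G$, and I write $\check{\b}^{solv} = \check{\t} \oplus \check{\mathfrak{u}}$ with $\check{\mathfrak{u}} = \mathrm{Lie}(\check{U})$.

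Next, $r$ is the differential of a morphism of algebraic groups---the epimorphism $R$ of Theorem \ref{thm.embedding} composed with the isomorphism $\mathrm{Ad}_{\lieh} B_H \cong B_H$, valid since $\lieh$ is semisimple---so it respects Jordan decomposition. Hence $r(\check{U})$ consists of unipotent elements of the Borel $B_H = A_H \ltimes N_H$ of $H$, which are precisely the elements of $N_H$, whence $r(\check{\mathfrak{u}}) \subseteq \n_H$; and $r(\check{\t})$ is an abelian subalgebra (image of the abelian $\check{\t}$) consisting of $\BR$-semisimple elements. Since $r$ is onto and $\check{\b}^{solv} = \check{\t}\oplus\check{\mathfrak{u}}$, we get $r(\check{\t}) + \n_H = \b_H$; combined with $r(\check{\t})\cap\n_H = 0$ (a semisimple element that is nilpotent vanishes), this makes $r(\check{\t})$ an abelian complement to $\n_H$ in $\b_H$ of $\BR$-semisimple elements, i.e.\ a maximal $\BR$-split torus of $\lieh$ contained in $\b_H$; as all such are conjugate, we may take $\a_H = r(\check{\t})$ (equivalently, align with it the root data of $\lieh$ fixed above). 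Finally, let $\check{\a}$ be any linear complement to $\check{\t}\cap\ker r$ in $\check{\t}$: it lies in $\check{\b}^{solv}\cap\a_G$, is an abelian subalgebra (a subspace of $\check{\t}$), and is carried by $r$ isomorphically onto $r(\check{\t}) = \a_H$ (dimensions agree, $\check{\a}\cap\ker r = 0$, both sides abelian). This $\check{\a}$, obtained through the conjugation of the previous paragraph, is as required.

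I expect the main obstacle to be that conjugation: one must push a maximal $\BR$-split torus of $\check{B}^{solv}$ into $A_G$ by an element of $P$ itself---not merely of $\GL(\BR^{p+1,q+1})$---while keeping $\check{B}^{solv}$ upper-triangular, which I handle by working inside the connected solvable group $Q$ and citing conjugacy of its maximal $\BR$-split tori (conjugating element in $R_u(Q)(\BR) < P$). The minor bookkeeping point that we are free to set $\a_H := r(\check{\t})$ also merits explicit mention, since $r$ gives no way to prescribe $r(\check{\t})$ beforehand.
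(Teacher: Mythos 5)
Your proof is correct and reaches the same conclusion, but by a genuinely different route from the paper's. The paper constructs the lift of $\a_H$ one basis element at a time: lifting each $X_{i'}$ to $\check{\b}^{solv}$, replacing it with its semisimple Jordan component (which stays in $\check{\b}^{solv}$ by algebraicity, and has the same $r$-image since $r$ respects Jordan decomposition), and passing to successively smaller $\mathrm{ad}$-invariant complements to $\ker r$ inside $r^{-1}(\a_H)$, intersected with the $0$-eigenspaces of the previous $\mathrm{ad}\,A_{j'}$, so that commutativity is preserved at each stage; only at the very end does it conjugate the resulting abelian subalgebra of commuting $\BR$-semisimple elements into $\a_G$ by an upper-triangular element of $P$. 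You instead invoke the Levi--Chevalley decomposition $\check{B}^{solv}=\check{S}\ltimes\check{U}$ of the solvable algebraic group, note $\check{S}$ is an $\BR$-split torus (no elliptic elements), conjugate it into $A_G$ first (by an element of $R_u(Q)(\BR)<P$, with $Q$ the upper-triangular subgroup of $P$, which indeed keeps all conclusions of Lemma~\ref{lem.solvable.in.p} intact), and then check at the Lie-algebra level that $r(\check{\t})$ is an abelian complement to $\n_H$ in $\b_H$ made of $\BR$-semisimple elements, hence a maximal $\BR$-split toral subalgebra; finally you take $\check{\a}$ to be any complement to $\ker r$ inside $\check{\t}$. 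Both arguments turn on the same two ingredients---compatibility of $r$ with Jordan decomposition, and conjugacy via upper-triangular $P$-elements of commuting $\BR$-split families into $\a_G$---but the paper's version is hands-on and essentially self-contained, while yours is shorter and more conceptual at the cost of invoking structure theory of solvable linear algebraic groups. Two minor notes. First, the intermediate enlargement of $\check{S}$ to a maximal torus $\check{S}'$ of $Q$ is unnecessary (and your centralizer argument for it is a little indirect): any torus of the connected $\BR$-split solvable group $Q$ is already conjugate into the fixed maximal torus $A_G$ by an element of $R_u(Q)(\BR)$. Second, you redefine $\a_H:=r(\check{\t})$, which is a harmless $B_H$-conjugate choice of split Cartan and roots, but it is worth flagging that the paper's inductive construction aims at the pre-fixed $\a_H$ directly and so avoids that reindexing.
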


 \begin{proof}
 Fix a basis $X_1, \dots, X_{p'}$ of $\a_H$. Let $\check{X}_1$ be a lift of $X_1$ to $\check{\b}^{\rm solv}$. The semisimple component of $\check{X}_1$ in its Jordan decomposition in $\g$ is nontrivial, so it is contained in $\check{\b}^{\rm solv}$ by algebraicity of the latter. Thus write $\check{X}_1 = A_1 + Z_1$ where $A_1$ is $\BR$-semisimple in $\g$ and $Z_1 \in \ker r$. The subalgebra $r^{-1}(\a_H)$ admits an $\ad A_1$-invariant subspace decomposition $\check{\a}_1 \ltimes \ker r$. Now $\check{\a}_1$ maps bijectively under $r$ to $\a_H$, which is abelian, so it commutes with $A_1$.

 Let $A_2$ be a lift of $X_2$ to $\check{\a}_1$. The semisimple Jordan component of $A_2$ commutes with $A_1$, has the same projection to $\a_H$ as $A_2$, and is contained in $\check{\b}^{\rm solv}$ by algebraicity. So we can assume~$A_2$ is semisimple. The $0$-eigenspace of $\ad A_1$, denote it $E_0(A_1)$, is $\ad A_2$-invariant. There is thus an~$\ad A_2$-invariant complement to $\ker r$ in $r^{-1}(\a_H)$, call it $\check{\a}_2$, which is contained in $E_0(A_1)$ and maps under $r$ onto $\a_H$. This $\check{\a}_2$ centralizes $A_1$ and $A_2$. Continuing for $i' = 3, \dots, p'$ results in an abelian subalgebra $\check{\a}_{p'} \subset \check{\b}^{\rm solv}$ mapping isomorphically under $r$ to $\a_H$ and comprising $\BR$-semisimple elements of $\g$. Such a subalgebra is contained in a maximal abelian $\BR$-split subalgebra of $\p$, which is in turn conjugate in $P$ to the desired $\check{\a}$ contained in our chosen diagonal subalgebra~$\a_G$.

The subspaces $\check{\a}_{i'}$ comprise upper-triangular matrices in $\p$, and the conjugation to the diagonal takes place in the subalgebra of $\p$ comprising upper-triangular matrices.
 \end{proof}

 Replacing $\check{B}^{\rm solv}$ with the conjugate given by the above lemma, we will henceforth have $\check{\a} \subset \a_G$ lifting $\a_H$. Denote by $\theta$ the homomorphism from $\a_G^*$ to $\a_H^*$ induced by restriction to $\check{\a}$ composed with $(r^*)^{-1}\colon \check{\a}^* \rightarrow \a_H^*$.

Finally, consider the unipotent radicals.
Because $r$ is an algebraic epimorphism, it maps the nilpotent radical of $\check{\b}^{\rm solv}$, which is contained in $\p_+$, onto that of $\b_H$, which is the sum of positive root spaces of $\lieh$ (see \cite[Corollary 4.3.6]{morris.ratners.thms.book}). The restriction of $r$ to the nilpotent radical may not split,
but there is a subspace $\check{U}_{\b}$,
complementary to $\ker r$ in the nilpotent radical of $\check{\b}^{\rm solv}$ and invariant by the $\BR$-diagonal subgroup of $\check{B}^{\rm solv}$, in particular by $\check{A}$. For $\lieh_\alpha \subset \b_H$ a positive root space, we will denote by $\check{\lieh}_\alpha$ its subspace lift to $\check{U}_\b$.

\subsection{Consequences of equivariance for stabilizer and isotropy}
\label{subsec.consequences.stab.iso}

The starting point for the analysis in this section are ideas which can be found in \cite{pecastaing.rank1}.
The facts that $\p$ is a subalgebra of $\lieg$ and that $\ad \p$ preserves a $(p,q)$-scalar product on $\lieg/\p$ up to scale have the following consequences via the embedding.

\begin{prop}[{\cite[Proposition~2.1]{pecastaing.rank1}}]\label{prop.vincent}
For $\lieh^0 = \iota^{-1}(\p)$,
\begin{itemize}\itemsep=0pt
\item[$(1)$] $\lieh^0$ is $\ad \b_H$-invariant,
\item[$(2)$] $\ad \b_H$ preserves a symmetric bilinear form on $\lieh/\lieh^0$ up to scaling by $- 2 \theta(\beta_1)$.
\end{itemize}
\end{prop}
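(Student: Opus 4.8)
The plan is to read off both assertions from the intertwining relation of the embedding, combined with the two structural facts stated just above the proposition: that $\p$ is a subalgebra of $\g$, and that $\ad\p$ lies in the conformal Lie algebra of a fixed $(p,q)$-scalar product $Q$ on $\g/\p$, with infinitesimal conformal factor $-2\beta_1$ --- here $\beta_1$ is understood as extended to a character of $\p$, necessarily vanishing on $\p_+$, which is forced by its value on $\a_G$ (equivalently, $-2\beta_1$ is $(-2)$ times the conformal weight of $\g/\p$). Throughout I write $r:\check\b\to\b_H$ for the Lie algebra epimorphism produced in Section \ref{sec.alignment} (restricting to $\check\b^{solv}$ if convenient, as the semisimple Levi factors lie in $\ker r$), so that the conclusion of the Embedding Theorem applied to $\cS=\lieh$ reads
\[
[\,Y,\iota(X)\,]_\g=\iota\bigl([\,r(Y),X\,]_\lieh\bigr)\qquad\text{for all }Y\in\check\b\subseteq\p,\ X\in\lieh .
\]

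First I would prove (1): if $X\in\lieh^0$, i.e. $\iota(X)\in\p$, and $W\in\b_H$, pick $Y\in\check\b$ with $r(Y)=W$; then $\iota([W,X])=[Y,\iota(X)]\in\p$ because $Y,\iota(X)\in\p$ and $\p$ is a subalgebra, so $[W,X]\in\iota^{-1}(\p)=\lieh^0$. Hence $[\b_H,\lieh^0]\subseteq\lieh^0$.

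For (2): by (1) the algebra $\b_H$ acts on $\lieh/\lieh^0$, and since $\iota(\lieh^0)\subseteq\p$ the map $\iota$ descends to a linear injection $\bar\iota:\lieh/\lieh^0\hookrightarrow\g/\p$; as each $Y\in\check\b\subseteq\p$ preserves $\p$, the displayed identity descends to $\bar\iota\circ(\ad W)=(\ad Y)\circ\bar\iota$ for any lift $Y$ of $W$. I would then set $\langle\,\cdot\,,\cdot\,\rangle:=\bar\iota^{*}Q$, a symmetric bilinear form on $\lieh/\lieh^0$, and compute: for $W\in\b_H$ with a lift $Y\in\check\b$, pushing forward by $\bar\iota$ and using that $\ad Y$ scales $Q$ by the conformal factor $-2\beta_1(Y)$,
\[
\langle(\ad W)\bar X_1,\bar X_2\rangle+\langle\bar X_1,(\ad W)\bar X_2\rangle = Q((\ad Y)\bar\iota\bar X_1,\bar\iota\bar X_2)+Q(\bar\iota\bar X_1,(\ad Y)\bar\iota\bar X_2) = -2\beta_1(Y)\,\langle\bar X_1,\bar X_2\rangle .
\]
To conclude I would choose the lift compatibly with the semidirect structure $\b_H=\a_H\ltimes\n_H$: writing $W=W_\a+W_\n$ with $W_\a\in\a_H$, $W_\n\in\n_H$, take $Y=\check X+\check N$, where $\check X\in\check\a\subseteq\a_G$ is the $r$-preimage of $W_\a$ supplied by the preceding lemma and $\check N\in\check{U}_\b\subseteq\p_+$ is the $r$-preimage of $W_\n$. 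Since $\beta_1$ vanishes on $\p_+$, $\beta_1(Y)=\beta_1(\check X)=\rho(\beta_1)(W_\a)=\rho(\beta_1)(W)$ by the very definition of $\rho$ (extending $\rho(\beta_1)\in\a_H^{*}$ to $\b_H$ by zero on $\n_H$). This is exactly the statement that $\ad\b_H$ preserves $\langle\,\cdot\,,\cdot\,\rangle$ up to scaling by $-2\rho(\beta_1)$.

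The one point requiring genuine care is this last identification: one must track $\beta_1$ as a character of all of $\p$ rather than merely of $\a_G$, verify that it annihilates $\p_+$, and match the decomposition $\b_H=\a_H\ltimes\n_H$ with the lifts $\check\a$ and $\check{U}_\b$ through $r$, so that the split part $\check X$ of the chosen lift computes $\rho(\beta_1)$. Everything else --- all of (1) and the remainder of (2) --- is a purely formal consequence of the intertwining relation together with "$\p$ is a subalgebra" and "$\ad\p$ is conformal with factor $-2\beta_1$," so no further geometric input is needed; I would not claim anything about nondegeneracy of $\langle\,\cdot\,,\cdot\,\rangle=\bar\iota^{*}Q$ beyond what the statement asserts.
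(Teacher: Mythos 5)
The paper itself gives no proof of this proposition, citing it to Pecastaing's Prop.~2.1 and indicating only the inputs (``$\p$ is a subalgebra,'' ``$\ad\p$ is conformal on $\g/\p$,'' ``via the embedding''); your argument fills this in correctly and in exactly the expected way. Both items are formal consequences of the Lie-algebra-level intertwining relation $[\,Y,\iota(X)\,]_\g=\iota([r(Y),X]_\lieh)$ supplied by the Embedding Theorem, and the only genuinely delicate point --- choosing the lift $Y=\check X+\check N$ with $\check X\in\check\a$ and $\check N$ in the nilradical so that $\beta_1(Y)$ matches $\rho(\beta_1)(W)$ via the definition of $\rho$ --- is exactly the one you isolate and handle. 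Your remark that $\beta_1$ kills $\check N$ because $\check U_\b\subseteq\p_+$ relies on the paper's assertion that the nilradical of $\check\b^{solv}$ sits in $\p_+$; even if one is cautious about that inclusion, the conclusion stands because the conformal character of $\p$ vanishes on $[\p,\p]$ and hence on any nilpotent element of $\p$, so $\beta_1(\check N)=0$ regardless. The appropriate restraint about not claiming nondegeneracy of $\bar\iota^*Q$ is also correct.
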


A straightforward consequence of the second axiom for the Cartan connection $\omega$ is that for~${X \in \lieh^0}$,
\begin{equation}
\label{eqn.iota.in.p}
 \iota ( [X,Y] ) = [\iota(Y),\iota(X)] \qquad \forall Y \in \lieh.
 \end{equation}
 It immediately follows that $\lieh^0$ is a subalgebra. In the Levi decomposition $\p \cong \q \ltimes (\BR^{p,q})^*$, the Levi complement $\q \cong \BR \oplus \so(p,q)$ is precisely the sum of root spaces $\lieg_\beta$ with $\beta$ in the span of~$\{ \beta_2, \dots, \beta_{p+1} \}$.

\begin{lem}\label{lem.lots.in.h0}
Let $\lieh^0 = \iota^{-1}(\p)$.
\begin{itemize}\itemsep=0pt
\item[$(1)$] If $\lieh = \lieh^0$, then $\iota(\lieh)$ is conjugate in $P$ to a subalgebra of $\so(p,q) \cong \q_{ss} \subset \q$.
\item[$(2)$] If $\lieh_\alpha \subset \lieh^0$ for $\alpha \in \Delta^+_H$, then the choice of the subspace $\check{U}_\b$ can be modified so that ${\check{\lieh}_\alpha = \iota(\lieh_\alpha) \subset \check{\b}}$.
\end{itemize}
\end{lem}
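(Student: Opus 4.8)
The plan is to exploit equation \eqref{eqn.iota.in.p}, which says that on $\lieh^0 = \iota^{-1}(\p)$ the map $\iota$ is an \emph{anti}-homomorphism into $\p$ in the sense that $\iota([X,Y]) = [\iota(Y),\iota(X)]$ for $X \in \lieh^0$, $Y \in \lieh$. For part (1), if $\lieh = \lieh^0$ then $\iota : \lieh \to \p$ is a Lie algebra anti-homomorphism, hence (composing with $-\mathrm{id}$, or just noting $X \mapsto -\iota(X)$ is a genuine homomorphism, or simply that the image is a subalgebra) $\iota(\lieh)$ is a subalgebra of $\p$ isomorphic to $\lieh$, in particular semisimple. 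A semisimple subalgebra of $\p \cong \q \ltimes (\BR^{p,q})^*$ must project injectively to $\q$ and, since the nilradical $(\BR^{p,q})^*$ is an ideal and semisimple algebras have no nontrivial solvable quotients, in fact $\iota(\lieh)$ lies in a $\q$-conjugate of the Levi complement $\q$; explicitly, a semisimple subalgebra can be conjugated into any fixed Levi complement (Malcev--Harish-Chandra), which here is $\q \cong \BR \oplus \so(p,q)$. As $\iota(\lieh)$ is semisimple it then lies in $\q_{ss} = \so(p,q)$. I should be careful to state the conjugation happens inside $P$, which is fine since all conjugations are by elements of the nilradical $P_+ < P$.

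For part (2), suppose $\lieh_\alpha \subset \lieh^0$ for some positive root $\alpha$. Pick $0 \neq Y \in \lieh_\alpha$; then $\iota(Y) \in \p$. I want to replace the chosen complement $\check U_\b$ so that its lift $\check\lieh_\alpha$ of $\lieh_\alpha$ is literally $\iota(\lieh_\alpha)$. Recall $\check U_\b$ was chosen as any $\check A$-invariant complement to $\ker r$ inside the nilpotent radical of $\check\b^{solv}$, which lives in $\p_+$. The point is that $\iota(\lieh_\alpha) \subset \p$ is itself an $\check A$-weight space (for the weight $\rho \circ (\text{the }\check\a\text{-weight of }\lieh_\alpha)$, using that $\check\a$ lifts $\a_H$ and $\iota$ intertwines the actions up to the epimorphism $r$), it maps isomorphically onto $\lieh_\alpha$ under $r$ because $\iota$ is injective and $r \circ \iota|_{\lieh^0}$ is essentially the identity back to $\lieh$ composed with the ad-action — more precisely I need to check that $\iota(\lieh_\alpha)$ is complementary to $\ker r$, which follows from $\iota$ being injective and the decomposition $\check\b^{solv} = \check U_\b \oplus \ker r$ on the relevant weight space. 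Then one modifies the complement $\check U_\b$ weight space by weight space, replacing the old lift of $\lieh_\alpha$ by $\iota(\lieh_\alpha)$; the remaining weight spaces of $\check U_\b$ are untouched, preserving $\check A$-invariance and complementarity to $\ker r$.

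The main obstacle I anticipate is verifying carefully in part (2) that $\iota(\lieh_\alpha)$ actually sits inside the nilpotent radical of $\check\b^{solv}$ (equivalently, inside $\p_+$ and inside $\check\b$), rather than merely inside $\p$. This requires two things: first, that $\iota(\lieh_\alpha) \subset \check\b$, which should come from the Embedding Theorem's equivariance — $\check B$ preserves $\iota(\lieh)$ and acts on it via $r$, and the positive root space $\lieh_\alpha$ of $\b_H$ is swept out by this action in a way that forces $\iota(\lieh_\alpha)$ to lie in $\check\b$; second, that it lies in the \emph{nilpotent} part $\p_+$, which should follow because $\iota(Y)$ acts nilpotently on $\V$ (as $Y$ does, being in a positive root space of $\b_H$, whose elements are $\ad$-nilpotent, combined with infinitesimal faithfulness of $\rho$), and nilpotent elements of $\p$ whose exponentials lie in the unipotent radical of $\check B^{solv}$ must be in $\p_+$. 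I would isolate this as the key computation and handle it by a weight/homogeneity argument using the filtration \eqref{filtration_rep}: an element of $\p$ acting nilpotently and lying in a $\check\a$-weight space with positive weight under $\rho\circ(\cdot)$ has positive homogeneity, hence lies in $\p_+ = \g^1$. Everything else is bookkeeping with the weight space decompositions already set up in Section \ref{sec.alignment}.
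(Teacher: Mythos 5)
Your argument for part (1) takes a genuinely different route from the paper: you invoke the Malcev conjugacy theorem (any semisimple subalgebra of a Lie algebra over a field of characteristic $0$ can be conjugated by $\exp(\ad N)$, $N$ in the nilradical, into a fixed Levi factor), whereas the paper derives linearizability of the $\lieh$-action at $x$ from the Thurston Stability Theorem and then reads off the conjugation by $P_+$. The algebraic route works here because $\mathfrak{r}(\p)=\z(\q)\oplus\p_+$ and $[\p,\mathfrak{r}(\p)]\subseteq\p_+$, so the conjugating element lies in $P_+$ as required; your approach is more elementary in that it avoids the geometric input entirely, at the cost of not giving the stronger statement that the local $\lieh$-action near $x$ is linearizable (which the paper does not need here anyway). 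One small imprecision: $\q\cong\BR\oplus\so(p,q)$ is reductive, not semisimple, so it is not itself a Levi factor of $\p$; the Malcev theorem lands you directly in $\q_{ss}$, and the extra ``semisimplicity implies image in $\q_{ss}$'' step is not logically needed in your version (though it does no harm).

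For part (2) there are two genuine gaps. First, your mechanism for $\iota(\lieh_\alpha)\subseteq\check\b$ (``the positive root space is swept out by the action'') is not a proof: the equivariance $[\check\b,\iota(\lieh)]\subseteq\iota(\lieh)$ tells you how $\check\b$ acts on $\iota(\lieh)$, but it does not place $\iota(\lieh_\alpha)\subset\p$ inside $\check\b$. What is needed is the ``moreover'' clause of Theorem~\ref{thm.embedding}: each $\exp(tY)\in B_H$ with $Y\in\lieh_\alpha\subset\lieh^0\cap\b_H$ fixes $x$ and has isotropy at $\hat x$ of the form $\exp(\mp t\,\iota(Y))$, which Theorem~\ref{thm.embedding} asserts lies in $\check B_{\hat x}$; hence $\iota(\lieh_\alpha)\subseteq\check\b$. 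Second, and more seriously, your ``homogeneity'' argument for placing $\iota(\lieh_\alpha)$ in the nilradical does not work: the $\check\a$-weight of $\iota(\lieh_\alpha)$ is a functional on $\check\a$, which is a proper subspace of $\a_G$, and positivity of that restricted weight does not determine the $\a_G$-weight, hence does not determine the filtration degree in $\g$. So ``positive $\check\a$-weight implies positive homogeneity implies $\p_+$'' fails. What the paper actually shows is that $\iota(\b_H)\cap\p=\iota(\b_H\cap\lieh^0)$ is a solvable \emph{ideal} of $\check\b$ (this uses \eqref{eqn.iota.in.p} together with the equivariance), so $\iota(\lieh_\alpha)$ lies in the solvable radical $\check\b^{solv}$; then, being a space of $\ad$-nilpotent elements in an algebraic solvable Lie algebra, it lies in the nilradical of $\check\b^{solv}$. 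Once that is established, your modification of $\check U_\b$ weight space by weight space is exactly the paper's conclusion. Note also that the Lemma's proof needs only ``nilradical of $\check\b^{solv}$,'' not ``$\p_+$''; conflating the two is harmless here but the nilradical of $\check\b^{solv}$ need not a priori be contained in $\p_+$, since strictly upper-triangular elements of the $\so(p,q)$ block are nilpotent in $\p$ but not in $\p_+$.
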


\begin{proof}
If $\lieh=\lieh^0$, then (\ref{eqn.iota.in.p}) and injectivity of $\iota$ imply the image of $\iota$ is a semisimple subalgebra of $\p$. Now $\iota(\lieh) = \iota_{\hat{x}}(\lieh) \subseteq \p$ for $\hat{x}$ given by the embedding theorem, and it is an isotropy algebra at the point $x = \pi(\hat{x})$.
The Thurston stability theorem \cite[Theorem~3]{thurston.reeb.stability} implies that $\lieh$ is linearizable at $x$, which means $\iota(\lieh)$ is conjugate by an element $g \in P_+$ into $\q$.
Semisimplicity implies the image is in $\q_{ss}$.

 Under the weaker assumption of (2), the conclusions of the embedding theorem still give that~$\iota(\lieh_\alpha) \subseteq \check{\b}$. It moreover belongs to the solvable subalgebra $\iota(\b_H) \cap \p \subset \check{\b}$, which is an~ideal by~(\ref{eqn.iota.in.p}). Therefore, $\iota(\lieh_\alpha) \subset \check{\b}^{\rm solv}$. As $\iota(\lieh_\alpha)$ comprises nilpotent elements, it belongs to the nilpotent radical of $\check{\b}^{\rm solv}$. Finally, it is $\check{\a}$-invariant. Thus $\iota(\lieh_\alpha)$ could differ from the original choice of $\check{\lieh}_\alpha$ only by a linear function to the intersection of the nilradical of $\check{\b}^{\rm solv}$ with $\ker r$, and we are free to change our choice of $\check{U}_{\b}$ accordingly.
\end{proof}

\begin{lem}[useful facts about $\lieh^0$]
\label{lem.about.h0}
As above, let $\lieh^0 = \iota^{-1}(\p)$.
\begin{itemize}\itemsep=0pt
\item[$(1)$] If $\a_H \subset \lieh^0$, then $\b_H \subset \lieh^0$.
\item[$(2)$] For $i'$, $j'$, possibly equal, if $\lieh_{- \alpha_{i'} - \alpha_{j'}} \cap \lieh^0 \neq 0$, then $\lieh_{- \alpha_{i'}} \subset \lieh^0$.
\item[$(3)$] Suppose $p' \geq 2$ and let $i' \neq j'$. If $\lieh_{-\alpha_{i'}} \subset \lieh^0$ and $\lieh_{- \alpha_{j'}} \cap \lieh^0 \neq 0$, then $\lieh_{-\alpha_{j'}} \subset \lieh^0$.
\end{itemize}
\end{lem}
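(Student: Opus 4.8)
The plan is to deduce all three parts from two properties of $\lieh^0$ established above --- that $\lieh^0$ is a subalgebra (a consequence of \eqref{eqn.iota.in.p}) and that it is $\ad \b_H$-invariant (Proposition \ref{prop.vincent}(1)) --- together with a handful of bracket relations in the root system $BC_{p'}$ of $\lieh = \mathfrak{su}(p',q')$. For part (1), I would use that $\lieh_\alpha = [\a_H, \lieh_\alpha]$ for every $\alpha \in \Delta_H^+$, because $\a_H$ acts on the root space $\lieh_\alpha$ through the nonzero functional $\alpha$. Assuming $\a_H \subset \lieh^0$ and recalling $\lieh_\alpha \subset \b_H$, the $\ad \b_H$-invariance of $\lieh^0$ gives $\lieh_\alpha = [\a_H, \lieh_\alpha] \subset [\lieh^0, \b_H] \subset \lieh^0$. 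Since $\b_H = \a_H \oplus \bigoplus_{\alpha \in \Delta_H^+} \lieh_\alpha$, this yields $\b_H \subset \lieh^0$.

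For part (2), I would fix $0 \neq Y \in \lieh_{-\alpha_{i'} - \alpha_{j'}} \cap \lieh^0$. Since $\lieh_{\alpha_{j'}} \subset \b_H$, invariance gives $[\lieh_{\alpha_{j'}}, Y] \subset \lieh^0$, and by weight considerations $[\lieh_{\alpha_{j'}}, Y] \subset \lieh_{-\alpha_{i'}}$. The crux is then the purely Lie-algebraic claim that for every nonzero $Y \in \lieh_{-\alpha_{i'} - \alpha_{j'}}$ the map $X \mapsto [X, Y]$ from $\lieh_{\alpha_{j'}}$ to $\lieh_{-\alpha_{i'}}$ is onto; when $i' = j'$ the space $\lieh_{-2\alpha_{i'}}$ is one-dimensional and the relevant bracket is $[\lieh_{\alpha_{i'}}, \lieh_{-2\alpha_{i'}}]$. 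This amounts to the surjectivity of a raising operator out of the lowest weight space of a module over the rank-one subalgebra $\mathfrak{s}_{j'} \cong \mathfrak{su}(1, q'-p'+1)$ spanned by the $\pm \alpha_{j'}$ and $\pm 2\alpha_{j'}$ root spaces (the $\alpha_{j'}$-string through $-\alpha_{i'} - \alpha_{j'}$ being that module), and is readily checked in the standard matrix model of $\mathfrak{su}(p',q')$. Granting it, $\lieh_{-\alpha_{i'}} = [\lieh_{\alpha_{j'}}, Y] \subset \lieh^0$.

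For part (3), with $p' \geq 2$, $i' \neq j'$, $\lieh_{-\alpha_{i'}} \subset \lieh^0$ and $0 \neq V \in \lieh_{-\alpha_{j'}} \cap \lieh^0$: since $\lieh^0$ is a subalgebra, $[\lieh_{-\alpha_{i'}}, V] \subset \lieh^0$, and by weights $[\lieh_{-\alpha_{i'}}, V] \subset \lieh_{-\alpha_{i'} - \alpha_{j'}}$. A second bracket fact --- that for $i' \neq j'$ no nonzero element of $\lieh_{-\alpha_{j'}}$ is annihilated by all of $\lieh_{-\alpha_{i'}}$ (again a short matrix computation, or, via invariance of the Killing form, a reformulation of the surjectivity used in part (2)) --- shows $[\lieh_{-\alpha_{i'}}, V] \neq 0$. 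Hence $\lieh_{-\alpha_{i'} - \alpha_{j'}} \cap \lieh^0 \neq 0$, and part (2) applied with $i'$ and $j'$ interchanged yields $\lieh_{-\alpha_{j'}} \subset \lieh^0$. The main obstacle in all of this is pinning down and verifying the two surjectivity/nondegeneracy statements inside the root structure of $\mathfrak{su}(p',q')$ and keeping the $BC_{p'}$ root-string sign conventions consistent; the input from the geometry is confined to the subalgebra and $\ad \b_H$-invariance properties of $\lieh^0$, which carry the rest.
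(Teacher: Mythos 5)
Your proposal is correct and follows the same route as the paper: all three parts are deduced from the $\ad \b_H$-invariance of $\lieh^0$ (Proposition \ref{prop.vincent}(1)), the fact that $\lieh^0$ is a subalgebra (from \eqref{eqn.iota.in.p}), and the bracket relations $\b_H = [\b_H,\a_H] + \a_H$, $[\lieh_{\alpha_{j'}},X]=\lieh_{-\alpha_{i'}}$ for nonzero $X\in\lieh_{-\alpha_{i'}-\alpha_{j'}}$, and the nondegeneracy of $[\lieh_{-\alpha_{i'}},\cdot]$ on $\lieh_{-\alpha_{j'}}$, which the paper states without elaboration and you justify via a rank-one subalgebra. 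Your extra detail in checking the surjectivity and nondegeneracy claims is a welcome elaboration rather than a departure.
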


\begin{proof}
These facts all follow easily from the $\ad \b_H$-invariance of $\lieh^0$ in Proposition~\ref{prop.vincent}\,(1), together with bracket relations in $\lieh$:
\begin{itemize}\itemsep=0pt
\item[(1)] $\b_H = [\b_H, \a_H] + \a_H$.
\item[(2)] For any nonzero $X \in \lieh_{- \alpha_{i'} - \alpha_{j'}}$, the image $[\lieh_{\alpha_{j'}},X] = \lieh_{-\alpha_{i'}}$.
\item[(3)] For any nonzero $X \in \lieh_{-\alpha_{j'}}$, there is $Y \in \lieh_{- \alpha_{i'}}$ with $0 \neq [X,Y] \in \lieh_{- \alpha_{i'}-\alpha_{j'}}$. From the fact that $\lieh^0$ is a subalgebra and (2), the conclusion follows.
\hfill\qed
\end{itemize}
\renewcommand{\qed}{}
\end{proof}

For a subspace $\mathfrak{l}$ or element $X$ of $\lieh$, we will denote by $\bar{\mathfrak{l}}$ or $\bar{X}$, respectively, the projection to~$\lieh/\lieh^0$.
The next lemma will be key for establishing the bound between $p'$ and $p$. The general idea is well known, and can be found in particular in \cite{pecastaing.rank1}.

\begin{lem}[useful facts about $\lieh/\lieh^0$]
\label{lem.h.mod.h0}
Suppose that for some $i'$, $\bar{\lieh}_{- \alpha_{i'}} \neq 0$. Denote by $[s]$ the class of the bilinear form on $\lieh/\lieh^0$ from Proposition~$\ref{prop.vincent}$\,$(2)$.
\begin{enumerate}\itemsep=0pt
\item[$(1)$] If $p' >1$ and $\bar{\lieh}_{- \alpha_{j'}} \neq 0$ for $j' \neq i'$, then $[s]$ has an isotropic subspace of dimension greater than $2p' - 1$, unless $p'=2$, $\theta(\beta_1) = \alpha_1 + \alpha_2$, and $[s]$ has an isotropic subspace of dimension~$3$.
\item[$(2)$] If $p' > 1$ and $\lieh_{- \alpha_{j'}} \subset \lieh^0$ for all $j' \neq i'$, then $[s]$ has an isotropic subspace of dimension at least $2p'-1$. If the maximal isotropic subspace for $[s]$ is dimension $2 p' - 1$, then
 \[\theta(2 \beta_1) = 2 \alpha_{i'} + \alpha'\qquad \mbox{for} \ \alpha' \in \{ 0, \alpha_{j'}, \alpha_{j'} + \alpha_{k'} \mid j', k' \neq i' \}.\]
\end{enumerate}
\end{lem}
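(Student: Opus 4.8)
The plan is to study the $\a_H$-weight decomposition of $\lieh/\lieh^0$ together with the pairing behaviour of the class $[s]$. By Proposition \ref{prop.vincent}(1) the subspace $\lieh^0$ is $\ad\b_H$-invariant, hence $\ad\a_H$-invariant, so $\lieh/\lieh^0=\bigoplus_\lambda V_\lambda$, the sum over restricted roots $\lambda$ of $\lieh$ (possibly including $\lambda=0$), where $V_\lambda$ denotes the weight-$\lambda$ subspace. The injection $\iota$ descends to an $\a_H$-equivariant injection $\lieh/\lieh^0\hookrightarrow\lieg/\p$, the $\a_H$-action on $\lieg/\p$ being taken through $\check\a\cong\a_H$; the weights of $\lieg/\p$ with respect to $\check\a\cong\a_H$ are the $\rho$-images of $\{\beta_m-\beta_1,\,-\beta_1,\,-\beta_1-\beta_m:m=2,\dots,p+1\}$, with the long-root spaces $\g_{\beta_m-\beta_1}$ and $\g_{-\beta_1-\beta_m}$ one-dimensional and the central space $\g_{-\beta_1}$ of dimension $q-p$. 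Thus each $\lambda$ with $V_\lambda\neq0$ is one of those $\rho$-images, and $\dim V_\lambda$ is bounded by the total dimension of the $\lieg/\p$-weight spaces mapping to it. Finally $[s]$ is the restriction up to scale of the $(p,q)$-form on $\lieg/\p$, which is definite on $\g_{-\beta_1}$ and pairs $\g_{\beta_m-\beta_1}$ with $\g_{-\beta_1-\beta_m}$ nondegenerately; and $[s]$ is $\ad\a_H$-semi-invariant of weight $-2\rho(\beta_1)$, so $s(V_\lambda,V_\mu)=0$ unless $\lambda+\mu=-2\rho(\beta_1)$.

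From this I would assemble a maximal isotropic subspace of $(\lieh/\lieh^0,[s])$ out of: for each pair $\{\lambda,\,-2\rho(\beta_1)-\lambda\}$ of distinct occurring weights, the member of larger dimension; every occurring $V_\lambda$ whose partner $-2\rho(\beta_1)-\lambda$ is not a weight of $\lieh/\lieh^0$ (these $V_\lambda$ lie in the radical of $s$), taken in full; and a maximal isotropic subspace of the central space $V_{-\rho(\beta_1)}$ — this last being zero whenever $-\rho(\beta_1)$ is hit only by $\g_{-\beta_1}$, so one must track whether some $\rho(\beta_m)$ vanishes. The remaining work is a dimension count, which rests on identifying which $V_\lambda$ are nonzero and with what dimension.

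Here $\bar\lieh_{-\alpha_{i'}}\neq0$ together with the contrapositive of Lemma \ref{lem.about.h0}(2) gives $\lieh_{-\alpha_{i'}-\alpha_{j'}}\cap\lieh^0=0$ for every $j'$ (including $j'=i'$), so $V_{-\alpha_{i'}-\alpha_{j'}}$ is the full root space, of dimension $2$ for $j'\neq i'$ and $1$ for $j'=i'$; in case (1) the same holds with $i'$ replaced by a second index $j'$ having $\bar\lieh_{-\alpha_{j'}}\neq0$, while in case (2) these root spaces, plus $\bar\lieh_{-\alpha_{i'}}$ itself, are all that is forced. I would then run through the admissible positions of $-\rho(\beta_1)$ — constrained by the requirement that $-\alpha_{i'}$, $-2\alpha_{i'}$ and the various $-\alpha_{i'}-\alpha_{j'}$ all be $\rho$-images of $\lieg/\p$-weights with sufficient multiplicity, which is exactly where the exceptional configuration $p'=2$, $\rho(\beta_1)=\alpha_1+\alpha_2$ enters — and in each position sum the contributions above. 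In case (1) the count should give isotropic dimension at least $2p'$ except in that single configuration, where a finer count using the now-nontrivial central space yields exactly $3$; in case (2) the forced spaces give at least $2p'-1$, and equality forces the $V_{-\alpha_{i'}-\alpha_{j'}}$ and $V_{-2\alpha_{i'}}$ to pair off with one another rather than lying in the radical, which happens precisely when $-2\rho(\beta_1)$ equals $-2\alpha_{i'}$, $-2\alpha_{i'}-\alpha_{j'}$, or $-2\alpha_{i'}-\alpha_{j'}-\alpha_{k'}$, i.e. $\rho(2\beta_1)=2\alpha_{i'}+\alpha'$ with $\alpha'\in\{0,\alpha_{j'},\alpha_{j'}+\alpha_{k'}:j',k'\neq i'\}$.

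The hard part will be this bookkeeping: enumerating the admissible alignments of $\rho$ on $\{\beta_2,\dots,\beta_{p+1}\}$, matching the weight spaces of $\lieh/\lieh^0$ forced by the $BC_{p'}$ bracket relations and Lemma \ref{lem.about.h0} (and their multiplicities) against what the multiplicity bound and the known weight structure of $\lieg/\p$ permit, accounting correctly for how $[s]$ pairs weight spaces and for the usually-trivial but occasionally-decisive contribution of the definite central space, and extracting a bound sharp enough both to beat $2p'-1$ generically in case (1) and to pin down the equality case in (2). I would model the argument on the rank-one analysis of \cite{pecastaing.rank1} and lean on the alignment of $\rho$ established in Section \ref{sec.alignment}.
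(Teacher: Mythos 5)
Your proposal identifies the essential orthogonality criterion that the paper also uses (attributed there to Observation~3 of \cite{pecastaing.rank1}): weight spaces $\overline{\lieh}_\alpha$ and $\overline{\lieh}_{\alpha'}$ are $[s]$-orthogonal unless $\alpha+\alpha'=-\rho(2\beta_1)$. You also correctly invoke Lemma~\ref{lem.about.h0}(2) to conclude $\lieh_{-\alpha_{i'}-\alpha_{j'}}\cap\lieh^0=0$ for all $j'$. But beyond that, the proposal is a plan, not a proof: you repeatedly defer the decisive steps (``The remaining work is a dimension count,'' ``I would then run through the admissible positions of $-\rho(\beta_1)$,'' ``The hard part will be this bookkeeping''). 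The content of the lemma \emph{is} that bookkeeping, and it is not carried out.

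The paper's argument is more concrete and avoids the machinery you propose. It never needs the bound $\dim V_\lambda \leq \dim(\text{preimage weight spaces in }\lieg/\p)$, which would be delicate since $\rho$ can identify several $\beta_i$'s; instead it works entirely within $\lieh$, using the known dimensions of $BC_{p'}$ root spaces. For part~(1) it fixes the explicit set $\Sigma=\{-\alpha_{i'}-\alpha_{k'},\,-\alpha_{j'}-\alpha_{k'},\,-\alpha_{i'},\,-\alpha_{j'}\colon 1\le k'\le p'\}$ of weights whose spaces have trivial intersection with $\lieh^0$, observes that any prescribed value $-\rho(2\beta_1)$ is the sum of at most two unordered pairs from $\Sigma$, and therefore excludes at most two weights to obtain an isotropic $\oplus_{\sigma\in\Sigma'}\overline{\lieh}_\sigma$ of dimension at least $4p'-6$, beating $2p'-1$ once $p'\ge3$; the $p'=2$ case is a short separate check. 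For part~(2) it takes $\Sigma=\{-\alpha_{i'}-\alpha_{k'},\,-\alpha_{i'}\}$, where every sum of two elements arises from a \emph{unique} unordered pair, and reads off the possible $\alpha'$ directly. Your weight-decomposition framework could probably be pushed through, but it would re-derive these same facts in a more roundabout way, and as written it does not yet establish the lemma. In particular, nothing in your proposal actually pins down the exceptional configuration $\rho(\beta_1)=\alpha_1+\alpha_2$ at $p'=2$, nor verifies that the generic count exceeds $2p'-1$, nor proves the equality characterization in part~(2) — these are asserted as what ``should'' happen rather than demonstrated.
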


\begin{proof}
As in \cite[Observation 3]{pecastaing.rank1}, the idea is that if $\alpha + \alpha'$ does not equal the conformal factor~$- \theta(2 \beta_1)$, then $\overline{\lieh}_\alpha \perp \overline{\lieh}_{\alpha'}$ for $[s]$.

Under the hypotheses of (1), Lemma~\ref{lem.about.h0}\,(2) implies that ${\lieh}_{- \alpha_{i'} - \alpha_{k'}}$ and ${\lieh}_{- \alpha_{j'} - \alpha_{k'}}$ have trivial intersection with $\lieh^0$ for all $k'$. The roots
\[
\Sigma = \{ - \alpha_{i'} - \alpha_{k'}, - \alpha_{j'} - \alpha_{k'}, - \alpha_{i'}, - \alpha_{j'} \mid 1 \leq k' \leq p' \}
\]
all occur as weights on $\lieh/\lieh^0$. Any sum of two of them has the form \smash{$\sum_{k'=1}^{p'} - n_{k'} \alpha_{k'}$} where $n_{k'} \in \N$, \smash{$2 \leq \sum n_{k'} \leq 4$}, and $n_{i'} + n_{j'} \geq 2$.
Each such sum is obtained in one or two ways -- that is, it is the sum of one or two different unordered pairs of elements of $\Sigma$. Excluding at most two elements of $\Sigma$ yields a subset $\Sigma'$ so that the sum of any two elements of $\Sigma'$ does not~equal~$- \theta(2 \beta_1)$. The root spaces \smash{$\lieh_{- \alpha_{k'} - \alpha_{\ell'}}$} with $k' \neq \ell'$ have dimension 2, while \smash{$\lieh_{- 2 \alpha_{i'}}$}, \smash{$\lieh_{- 2 \alpha_{j'}}$} are one-dimensional; the spaces \smash{$\overline{\lieh}_{- \alpha_{i'}}$}, \smash{$\overline{\lieh}_{- \alpha_{j'}}$} have dimension at least $1$. Thus \smash{$\bigoplus_{\sigma \in \Sigma'} \overline{\lieh}_\sigma$} is totally isotropic for $[s]$ and has dimension at least $4p'-6$. If $p' \geq 3$, then the latter dimension exceeds $2p'-1$.

Now suppose $p'=2$, so
\[\Sigma = \{ -\alpha_1 - \alpha_2, - 2 \alpha_1, - 2 \alpha_2, - \alpha_1, - \alpha_2 \}.\]
For a sum $\sigma = - n_1 \alpha_1 - n_2 \alpha_2$ with $n_1 \leq 1$, excluding $- \alpha_2$ and $- 2 \alpha_2$ gives $\Sigma'$ with no two elements summing to $\sigma$ and corresponding to a subspace of $\lieh/\lieh^0$ of dimension at least 4. Thus we may assume $ \theta(2 \beta_1) \neq n_1 \alpha_1 + n_2 \alpha_2$ with $n_1 \leq 1$, nor, by symmetry, with $n_2 \leq 1$. The only possibility is thus that $\theta(2 \beta_1) = 2 \alpha_1 + 2 \alpha_2$, with an isotropic subspace of dimension 3.

In the case (2), the root spaces $\lieh_{- \alpha_{i'} - \alpha_{j'}}$ have trivial intersection with $\lieh^0$ for all $j'$, and Lemma~\ref{lem.about.h0}\,(3) implies that $\lieh_{- \alpha_{i'}} \cap \lieh^0 = 0$ as well. For the collection of roots $ \Sigma = \{ {- \alpha_{i'} - \alpha_{k'}},\allowbreak - \alpha_{i'} \mid 1 \leq k' \leq p' \}$, any sum of two elements is obtained from a unique unordered pair, and has the form $- 2 \alpha_{i'} - \alpha'$ for $\alpha'$ as in the conclusion of (2), or $\alpha' = \alpha_{i'}$ or $2 \alpha_{i'}$. If $- \theta(2 \beta_1) $ is not such a sum, then all of the root spaces $\lieh_{- \alpha_{i'} - \alpha_{k'}}$ together with $\lieh_{- \alpha_{i'}}$ comprise an isotropic subspace, and it has dimension $2 p' +1$. If $\theta(2 \beta_1) = 2 \alpha_{i'} + \alpha'$ with $\alpha' = \alpha_{i'}$ or $2 \alpha_{i'}$, then $\lieh_{- 2 \alpha_{i'}}$ can be omitted, leaving an isotropic subspace of dimension $2p'$. Thus the maximal isotropic subspace can have dimension $2p'-1$ only if $\alpha'$ is as in the conclusion of (2).
\end{proof}

\subsection{Proof of the inequality}\label{subsec.proof.ineq}

This section is devoted to the proof of Proposition~\ref{prop.sutoso.ineq}. Any $[s]$-isotropic subspace of $\lieh/\lieh^0$ has dimension at most $p$.
As in~\cite{pecastaing.rank1},
the tension between this upper bound and the isotropic subspaces given by Lemma~\ref{lem.h.mod.h0} plays a key role.

If $p=1$, then $p < q$ because $\mbox{dim } M \geq 3$.
The bound $p'=1$ then follows from Zimmer~\cite{zimmer.rankbounds} and Bader--Nevo \cite[Theorem~2]{bn.simpleconf}; see also Pecastaing \cite{pecastaing.smooth.sl2r}.

\begin{prop}
If $p=2$, then $p'=1$.
\end{prop}

\begin{proof}
First, suppose $\lieh_{- \alpha_{i'}}$ is contained in $\lieh^0$ for all $i'$. The $\ad \b_H$-invariance of $\lieh^0$ then implies that all nonnegative root spaces are in $\lieh^0$, as well. Now $\b_H$ and all $\lieh_{\pm \alpha_{i'}}$ are in $\lieh^0$. These subspaces generate $\lieh$, so $\lieh = \lieh^0$ by (\ref{eqn.iota.in.p}). Lemma~\ref{lem.lots.in.h0}\,(1) implies that by conjugating in $P$, we may assume $\iota(\lieh) \subset \q_{ss} \cong \so(2,q)$, see Remark \ref{rem.conjugate.embedding}. Provided $q \geq 3$, the conclusions of the case~${p=1}$ apply -- note that now $\lieh$ acts conformally on the Lorentzian M\"obius space of dimension~$q$. On the other hand, if $p=q=2$, then there is no nontrivial Lie algebra homomorphism from $\mathfrak{su}(p',q')$ to $\so(2,2) \cong \so(1,2) \oplus \so(1,2)$ where $0 < p' < q'$.
Thus $p'=1$.

Next suppose that one $\overline{\lieh}_{- \alpha_{i'}} \neq 0$ but that $\lieh_{- \alpha_{j'}} \subset \lieh^0$ for all $j' \neq i'$. An isotropic subspace for $[s]$ has dimension at most $2$; on the other hand, Lemma~\ref{lem.h.mod.h0}\,(2) gives the lower bound $2p'-1$ if $p' >1$, so $p'=1$.

Finally, suppose that $\overline{\lieh}_{- \alpha_{i'}}$, $\overline{\lieh}_{- \alpha_{j'}} \neq 0$ for distinct $i'$, $j'$. This could only happen if $p'>1$, in which case Lemma~\ref{lem.h.mod.h0}\,(1) would give an $[s]$-isotropic subspace of dimension at least $3$, so this case does not occur.
\end{proof}

Now let $p \geq 3$.
If for some short root $\alpha_{i'}$, the projection $\overline{\lieh}_{- \alpha_{i'}} \neq 0$, then by Lemma~\ref{lem.h.mod.h0}, $p \geq 2p'-1$.

If all $\lieh_{- \alpha_{i'}} \subset \lieh^0$, then by Lemma~\ref{lem.lots.in.h0}\,(1) and (\ref{eqn.iota.in.p}), $\iota$ can be conjugated in $P$ to a homomorphism into $\q_{ss} \cong \so(p,q)$. Then a group isogeneous to $H$ acts conformally on $\mbox{\bf M\"ob}^{p-1,q-1}$.
Thus there is an embedding of $(\lieh, \Ad B_H)$ into $(\q, {\rm Ad}_\q P')$, where $\q = \q_{ss}$ and $P'$ is the stabilizer of an isotropic line in $\BR^{p,q}$ (by Theorem~\ref{thm.embedding} and amenability of $B_H$).
By induction on $p$, we~conclude~${p' \leq p/2 < (p+1)/2}$.

\subsection{Root correspondences under the embedding}
\label{subsec.root.correspondences}

For $\alpha \in \Delta_H^+$, denote by
\[\check{\Delta}(\alpha) = \bigl\{ \beta \in \theta^{-1}(\alpha) \cap \Delta_G^+ \mid \exists \check{X} \in \check{\lieh}_\alpha \ \mbox{with} \ \check{X}_\beta \neq 0 \bigr\},\]
where $\check{X}_\beta$ denotes the component on the subspace $\lieg_\beta$.
 Let
 \[\check{\Delta} = \bigcup_{\alpha \in \Delta_H^+} \check{\Delta}(\alpha).\]
\begin{lem}
 The restriction of $\theta$ to $\check{\Delta}$ is a surjection onto $\Delta_H^+$.
 \end{lem}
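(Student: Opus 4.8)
The plan is to reduce the statement to showing that $\check{\Delta}(\alpha)\neq\emptyset$ for every $\alpha\in\Delta_H^+$. Indeed, by its definition $\check{\Delta}(\alpha)\subset\rho^{-1}(\alpha)\cap\Delta_G^+$, so $\rho(\check{\Delta})\subset\Delta_H^+$ holds automatically, and $\rho(\check{\Delta}(\alpha))=\{\alpha\}$ as soon as $\check{\Delta}(\alpha)$ is nonempty; surjectivity onto $\Delta_H^+$ then follows immediately.

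The crux is to recognize $\check{\lieh}_\alpha$ as a weight space for $\check{\a}$. Because $\check{U}_{\b}$ is invariant under $\check{A}$, it is $\ad \check{\a}$-invariant, and since $\check{\a}\subset\a_G$ consists of $\BR$-semisimple elements of $\g$, each $\ad A$ with $A\in\check{\a}$ acts semisimply on $\check{U}_{\b}$; hence $\check{U}_{\b}$ is the direct sum of its $\check{\a}$-weight spaces. As recalled above, $r$ restricts to a linear isomorphism of $\check{U}_{\b}$ onto the nilradical $\bigoplus_{\alpha\in\Delta_H^+}\lieh_\alpha$ of $\b_H$, and being a Lie algebra map it intertwines $\ad\check{\a}$ on $\check{U}_{\b}$ with $\ad\a_H$ on that nilradical through the isomorphism $r|_{\check{\a}}\colon\check{\a}\stackrel{\sim}{\to}\a_H$. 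Comparing the two weight decompositions, $\check{\lieh}_\alpha$ is exactly the $\check{\a}$-weight space in $\check{U}_{\b}$ for the character $A\mapsto\alpha(r(A))$; in particular $\check{\lieh}_\alpha$ is $\check{\a}$-invariant and, being the isomorphic image of the nonzero space $\lieh_\alpha$, is itself nonzero.

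To conclude, fix $\alpha\in\Delta_H^+$, choose any nonzero $\check{X}\in\check{\lieh}_\alpha$, and expand $\check{X}=\sum_\beta\check{X}_\beta$ in the $\a_G$-root space decomposition of $\p_+$, the sum ranging over the roots $\beta$ of $\a_G$ occurring in $\p_+$, all of which lie in $\Delta_G^+$. Applying $\ad A$ for $A\in\check{\a}$ and comparing $\g_\beta$-components, if $\check{X}_\beta\neq 0$ then $\beta(A)=\alpha(r(A))$ for all $A\in\check{\a}$, and unwinding the definition of $\rho$ (restriction to $\check{\a}$ followed by $(r^*)^{-1}$) this says exactly $\rho(\beta)=\alpha$. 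Since $\check{X}\neq 0$, at least one such $\beta$ occurs; hence $\beta\in\check{\Delta}(\alpha)$ and $\alpha\in\rho(\check{\Delta})$. The one point demanding care is the identification of $\check{\lieh}_\alpha$ with a $\check{\a}$-weight space; everything else is bookkeeping with root-space decompositions and the definition of $\rho$. (Alternatively one may bypass that identification by observing that $r([A,\check{X}])=\alpha(r(A))\,r(\check{X})$ while $[A,\check{X}]\in\check{U}_{\b}$, so $[A,\check{X}]=\alpha(r(A))\check{X}$ by injectivity of $r|_{\check{U}_{\b}}$, exhibiting $\check{X}$ directly as a common $\ad\check{\a}$-eigenvector.)
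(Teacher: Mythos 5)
Your proof is correct and follows essentially the same route as the paper's. Both start from the nonzero lift $\check{X}\in\check{\lieh}_\alpha$ of a nonzero $X\in\lieh_\alpha$, decompose it into $\a_G$-root components (lying in positive root spaces, since $\check{U}_\b$ sits inside the nilradical of $\check{\b}^{solv}$, which is strictly upper-triangular after the conjugations of Section 4.1), and conclude that the contributing roots lie in $\rho^{-1}(\alpha)$. The one place you add genuine content is the justification of that last containment: the paper simply asserts it, whereas you supply the weight-space argument --- either by identifying $\check{\lieh}_\alpha$ as the $\check{\a}$-weight space for the character $A\mapsto\alpha(r(A))$, or, in your parenthetical alternative, by using $r([A,\check{X}])=\alpha(r(A))r(\check{X})$ and injectivity of $r|_{\check{U}_\b}$ to exhibit $\check{X}$ directly as a common $\ad\check{\a}$-eigenvector; comparing components then forces $\beta|_{\check{\a}}=\alpha\circ r$, which unwinds to $\rho(\beta)=\alpha$. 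This is exactly what the paper is implicitly invoking, and making it explicit is a useful clarification rather than a different method.
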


 \begin{proof}
 Indeed, Theorem~\ref{thm.embedding} ensures that every root of~$\Delta_H$ is the image under $\theta$ of some root of~$\Delta_G$.
Because $r \colon \check{\b}^{\rm solv} \rightarrow \operatorname{ad}_\h \b_H$ is onto, for $\alpha \in \Delta_H^+$, the root space $\lieh_\alpha$ is contained in the
image under $r$ of the nilpotent radical
 of $\check{\b}^{\rm solv}$. In our basis from Lemma~\ref{lem.solvable.in.p}, the latter subalgebra is strictly upper-triangular,
and is therefore contained in the sum of positive root spaces of $\lieg$. Thus for any $X \in \lieh_\alpha$ the unique $\check{X} \in r^{-1}(X) \cap \check{\lieh}_\alpha$ can be written \smash{$\check{X} = \sum_{\beta \in \Delta_G^+} \check{X}_\beta$}. The set of~$\beta$ occurring in this sum is nonempty and contained in $\theta^{-1}(\alpha)$.
\end{proof}

\begin{lem}\label{lem.root.lifts}
Let $i' \in \{ 1, \dots, p' \}$ and assume that $\lieh_{ \alpha_{i'}} \subset \lieh^0$. Then
\begin{enumerate}\itemsep=0pt
\item[$(1)$] roots in $\check{\Delta} (2 \alpha_{i'} )$ are sums of two distinct roots of $\check{\Delta}(\alpha_{i'})$,
\item[$(2)$] there is a terminal or an initial long root in $\check{\Delta}(2 \alpha_{i'})$.
\end{enumerate}
\end{lem}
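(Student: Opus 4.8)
The plan is to run everything through the lifts $\check{\lieh}_{\alpha_{i'}} = \iota(\lieh_{\alpha_{i'}})$ and $\check{\lieh}_{2\alpha_{i'}} = \iota(\lieh_{2\alpha_{i'}})$ inside $\check{\b}$. This is legitimate: $\lieh_{\alpha_{i'}} \subseteq \lieh^0$ by hypothesis, and since $\lieh^0$ is a subalgebra also $\lieh_{2\alpha_{i'}} = [\lieh_{\alpha_{i'}}, \lieh_{\alpha_{i'}}] \subseteq \lieh^0$, so Lemma~\ref{lem.lots.in.h0}(2) applies to each of the two weight spaces and the two modifications of $\check{U}_{\b}$ are compatible, concerning distinct $\check{\a}$-weight spaces. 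For part (1), fix $0 \neq Z \in \lieh_{2\alpha_{i'}}$ and write $Z = [X,Y]$ with $X, Y \in \lieh_{\alpha_{i'}}$, which is possible because $[\lieh_{\alpha_{i'}}, \lieh_{\alpha_{i'}}] = \lieh_{2\alpha_{i'}}$ is one-dimensional. Since $X \in \lieh^0$, $(\ref{eqn.iota.in.p})$ gives $\iota(Z) = [\iota(Y), \iota(X)]$. Decompose $\iota(X) = \sum_\gamma \iota(X)_\gamma$ and $\iota(Y) = \sum_\delta \iota(Y)_\delta$ into $\a_G$-root components; since $\iota(\lieh_{\alpha_{i'}})$ is an eigenspace for each $\ad \check{a}$, $\check{a} \in \check{\a}$, with eigenvalue $\alpha_{i'}(r(\check{a}))$, and lies in a sum of positive root spaces of $\g$ (being contained in the nilradical of $\check{\b}^{solv}$), every $\gamma$, $\delta$ occurring lies in $\Delta_G^+$ with $\rho$-value $\alpha_{i'}$, i.e.\ $\gamma, \delta \in \check{\Delta}(\alpha_{i'})$. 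Then $\iota(Z) = \sum_{\gamma,\delta}[\iota(Y)_\delta, \iota(X)_\gamma]$ with the $(\gamma,\delta)$-summand in $\g_{\gamma+\delta}$, and since $\lieh_{2\alpha_{i'}}$ is one-dimensional $\check{\Delta}(2\alpha_{i'})$ is exactly the root support of $\iota(Z)$; hence each $\beta \in \check{\Delta}(2\alpha_{i'})$ equals $\gamma + \delta$ for some $\gamma, \delta \in \check{\Delta}(\alpha_{i'})$, necessarily distinct since no root of $B_{p+1}$ is twice a root.

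For part (2), note first that $\check{\Delta}(2\alpha_{i'}) \neq \emptyset$ by Lemma~\ref{lem.root.surj}, as $2\alpha_{i'} \in \Delta_H^+$. Suppose toward a contradiction that every element of $\check{\Delta}(2\alpha_{i'})$ is a short root $\beta_m$ of $\g$. In $B_{p+1}$ a short root can be written as a sum of two distinct positive roots only in the form $\beta_m = \beta_j + (\beta_m - \beta_j)$ with $j > m$ (the remaining combinations of two positive roots of $B_{p+1}$ either fail to be roots or produce a long root); so by part (1), $\check{\Delta}(\alpha_{i'})$ contains both the short root $\beta_j$ and the initial long root $\beta_m - \beta_j$, with $\rho(\beta_j) = \rho(\beta_m - \beta_j) = \alpha_{i'}$, hence $\rho(\beta_m) = 2\alpha_{i'}$. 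Next, $\iota(Z)$ centralizes $\iota(\lieh_{\alpha_{i'}})$: indeed $[\lieh_{\alpha_{i'}}, \lieh_{2\alpha_{i'}}] \subseteq \lieh_{3\alpha_{i'}} = 0$, so $(\ref{eqn.iota.in.p})$ gives $[\iota(Z), \iota(X)] = 0$ for all $X \in \lieh_{\alpha_{i'}}$. Looking at the $(\beta_m + \beta_j)$-component of this identity: since every element of $\check{\Delta}(2\alpha_{i'})$ is short and every element of $\check{\Delta}(\alpha_{i'})$ has $\rho$-value $\alpha_{i'}$, the only pair $(\mu, \nu) \in \check{\Delta}(2\alpha_{i'}) \times \check{\Delta}(\alpha_{i'})$ with $\mu + \nu = \beta_m + \beta_j$ is $(\beta_m, \beta_j)$, so $[\iota(Z)_{\beta_m}, \iota(X)_{\beta_j}] = 0$ for all $X$. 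The bracket $\g_{\beta_m} \times \g_{\beta_j} \to \g_{\beta_m + \beta_j}$ is a nondegenerate pairing into a one-dimensional space (up to normalization, the anisotropic inner product inside $\g_0 \cong \mathfrak{co}(p,q)$ when $m \geq 2$, and its action on the $\g_1$-weight space $\g_{\beta_1}$ when $m = 1$), which confines $\iota(Z)_{\beta_m}$ to the annihilator of the nonzero subspace $\{\iota(X)_{\beta_j} : X \in \lieh_{\alpha_{i'}}\}$ of $\g_{\beta_j}$. Iterating this over all short roots occurring in $\iota(Z)$ and all root components of $\iota(\lieh_{\alpha_{i'}})$---and, in the residual case $\beta_m = \beta_1$, using that the conformal factor $-2\rho(\beta_1) = -4\alpha_{i'}$ of $\lieh/\lieh^0$ from Proposition~\ref{prop.vincent}(2) is incompatible with the $\rho$-images of the weights of $\g/\p$, as in Lemma~\ref{lem.h.mod.h0}---forces $\iota(Z) = 0$, contradicting $Z \neq 0$ and injectivity of $\iota$. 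Therefore $\check{\Delta}(2\alpha_{i'})$ contains an initial or a terminal long root.

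The genuine obstacle is this closing step of part (2): turning the vanishing $[\iota(Z),\iota(X)] = 0$, together with the equivariance constraint carried by $\rho$, into an actual contradiction with the ``only short roots'' assumption. This is where one must extract enough information about $\rho$---equivalently, about the conformal factor $-2\rho(\beta_1)$ and the weight structure of $\lieh/\lieh^0$---to exclude the degenerate configuration; part (1), the nonemptiness of $\check{\Delta}(2\alpha_{i'})$, and the $B_{p+1}$ root combinatorics are routine.
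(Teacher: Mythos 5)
Your proof of part (1) is correct and essentially the same as the paper's: promote $\lieh_{\alpha_{i'}}$ and $\lieh_{2\alpha_{i'}}$ to $\iota(\lieh_{\alpha_{i'}}) = \check{\lieh}_{\alpha_{i'}}$ and $\iota(\lieh_{2\alpha_{i'}}) = \check{\lieh}_{2\alpha_{i'}}$ via Lemma~\ref{lem.lots.in.h0}(2) and Proposition~\ref{prop.vincent}(1), apply (\ref{eqn.iota.in.p}), and expand into $\a_G$-root components.

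For part (2), you start from the same ingredient as the paper (the centralizing identity $[\iota(Z),\iota(X)]=0$ coming from $[\lieh_{\alpha_{i'}},\lieh_{2\alpha_{i'}}]=0$ together with (\ref{eqn.iota.in.p})), but your strategy is different and has a real gap. You argue by contradiction and try to force $\iota(Z)=0$. From the $(\beta_m+\beta_j)$-component and the ``only short roots'' hypothesis you correctly extract $[\iota(Z)_{\beta_m},\iota(X)_{\beta_j}]=0$ for all $X$, and the pairing $\g_{\beta_m}\times\g_{\beta_j}\to\g_{\beta_m+\beta_j}$ is indeed nondegenerate. But that only places $\iota(Z)_{\beta_m}$ in the annihilator of the subspace $\{\iota(X)_{\beta_j}:X\in\lieh_{\alpha_{i'}}\}\subseteq\g_{\beta_j}$, which you have no reason to believe spans $\g_{\beta_j}$: the short root spaces $\g_{\beta_j}$ have dimension $q-p$, unrelated in general to $\dim\lieh_{\alpha_{i'}} = 2(q'-p')$, so the annihilator can be nonzero. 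The ``iterating\ldots forces $\iota(Z)=0$'' step therefore does not go through, and you yourself flag precisely this as the obstacle.

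The paper closes the argument differently and avoids this issue entirely. Rather than trying to kill every component of $\iota(Z)$, it fixes a specific pair $X,Y$ with $[X,Y]=Z$ and $\iota(Y)_{\beta_j-\beta_i}=0$ (possible since $\g_{\beta_j-\beta_i}$ is one-dimensional), so that $\iota(Z)_{\beta_j}=[\iota(Y)_{\beta_i},\iota(X)_{\beta_j-\beta_i}]\neq 0$. It then invokes the string property of $B_{p+1}$---for nonzero $\check{X}\in\g_{\beta_j-\beta_i}$, $\check{Y}\in\g_{\beta_i}$ the double bracket $[[\check{X},\check{Y}],\check{Y}]\in\g_{\beta_i+\beta_j}$ is nonzero---to see that the single term $[\iota(Z)_{\beta_j},\iota(Y)_{\beta_i}]$ in the $(\beta_i+\beta_j)$-component of $[\iota(Z),\iota(Y)]=0$ is already nonzero. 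Since the only other ways to produce $\beta_i+\beta_j$ from roots in $\check{\Delta}(2\alpha_{i'})\times\check{\Delta}(\alpha_{i'})$ involve an initial-plus-terminal pair, cancellation can only occur through a long-root component of $\iota(Z)$. This reads off a long root in $\check{\Delta}(2\alpha_{i'})$ directly, constructively, without needing to annihilate $\iota(Z)$. The string-theoretic nonvanishing of the specific double bracket is exactly the leverage your abstract nondegeneracy argument lacks, and is the idea you would need to add to close part (2).
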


\begin{proof}
Suppose $\lieh_{ \alpha_{i'}} \subset \lieh^0$ and apply Lemma~\ref{lem.lots.in.h0}\,(2) to obtain $\check{\lieh}_{ \alpha_{i'}} = \iota\bigl(\lieh_{ \alpha_{i'}}\bigr)$. By Proposition~\ref{prop.vincent}\,(1), $\lieh_{2 \alpha_{i'}}$ is also in $\lieh^0$, so $\check{\lieh}_{2 \alpha_{i'}} = \iota\bigl(\lieh_{2 \alpha_{i'}}\bigr)$, as well.

Every nonzero element of $\lieh_{2 \alpha_{i'}}$ can be obtained as $Z = [X,Y]$ for $X,Y \in \lieh_{\alpha_{i'}}$ distinct nonzero elements. Using (\ref{eqn.iota.in.p}), we obtain
\[\check{\lieh}_{2 \alpha_{i'}} = \iota\bigl(\lieh_{2\alpha_{i'}}\bigr) = \iota \bigl( \bigl[ \lieh_{\alpha_{i'}}, \lieh_{\alpha_{i'}}\bigr] \bigr) = \bigl[ \iota\bigl(\lieh_{ \alpha_{i'}}\bigr), \iota\bigl(\lieh_{ \alpha_{i'}}\bigr)\bigr] = \bigl[ \check{\lieh}_{\alpha_{i'}}, \check{\lieh}_{\alpha_{i'}}\bigr].\]
Therefore, $\check{Z} = \bigl[\check{X},\check{Y}\bigr]$, and
point (1) follows.

Part (2) is vacuously true if $p=q$, since all roots in $\lieg$ are initial or terminal.
If $\beta_j$ is a short root in $\check{\Delta}(2 \alpha_{i'})$, then it was obtained as the sum of $\beta_j - \beta_i$ and $\beta_i$ in $\check{\Delta}(\alpha_{i'})$ for some $i > j$; this is the only way to obtain $\beta_j$ as a sum of two positive roots. There are $X,Y \in \lieh_{\alpha_{i'}}$ with $[X,Y]=Z \neq 0$ and
such that
\[
\iota(Y)_{\beta_j - \beta_i} = 0, \qquad
[\iota(Y),\iota(X)]_{\beta_j} = [\iota(Y)_{\beta_i}, \iota(X)_{\beta_j - \beta_i}] = \iota(Z)_{\beta_j} \neq 0.\]
To ensure the first equality, we are using that $\lieg_{\beta_j-\beta_i}$ is one-dimensional, and replacing $Y$ by a~linear combination with $X$ if necessary. The last equality relies on (\ref{eqn.iota.in.p}).
Now
\[
\iota([Y,Z]) = [\iota(Z),\iota(Y)] = 0
\] but $[ \iota(Z)_{\beta_j},\iota(Y)_{\beta_i}] \neq 0$. Since $\theta(\beta_i) = \alpha_{i'}$ and $\theta(\beta_j) = 2 \alpha_{i'}$, the components
$\iota(Y)_{\beta_j}$ and $\iota(Z)_{\beta_i}$ are both $0$. To obtain $\beta_i + \beta_j$ as the sum of two positive roots of $\Delta_G$, these could be $\beta_i$ and $\beta_j$, or an initial plus a terminal root, of the form $(\beta_i - \beta_k) + (\beta_j + \beta_k)$ for some $k > i$, or vice-versa. In order that the nonzero bracket $[ \iota(Z)_{\beta_j},\iota(Y)_{\beta_i}]$ cancels, there must be a nonzero component of $\iota(Z) \in \check{\lieh}_{2 \alpha_{i'}}$ on one of these initial or terminal roots, yielding an initial or terminal root in~$\check{\Delta}(2 \alpha_{i'})$, as claimed.
\end{proof}

When there is equality in Proposition~\ref{prop.sutoso.ineq}, then the root correspondence given by $\theta$ has the following form.

\begin{prop}
\label{prop.sutoso.roots}
Let $H$ be locally isomorphic to $\SU(p',q')$ for $0 < p' < q'$, with Borel subgroup~$B_H$. Let $\lieg = \so(p+1,q+1)$ with $p \leq q$, and $P < G$ the stabilizer of an isotropic line of $\BR^{p+1,q+1}$. Suppose there is an embedding of $(\lieh, \operatorname{Ad} B_H)$ in $(\lieg, \operatorname{Ad}P)$ with $\operatorname{ad}(\iota(\lieh) \cap \p ) \subseteq \check{\b}$.
If~${p=2p'-1}$, then $p<q$, and, up to conjugation in $P$, the set $\check{\Delta}(\alpha_{i'})$ comprises two short roots for each $i'=1, \dots, p'$, and $\overline{\lieh}$ contains a maximal isotropic subspace of $\BR^{p,q}$.
\end{prop}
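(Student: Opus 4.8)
The plan is to combine the isotropic-subspace counting from Lemma~\ref{lem.h.mod.h0} with the root-lifting constraints of Lemmas~\ref{lem.root.surj} and~\ref{lem.root.lifts}, reducing by induction on $p'$ to the already-handled low-rank cases (Propositions~\ref{prop.p=1}, \ref{prop.p=2}, \ref{prop.p=3}). First I would dispose of the case $\lieh=\lieh^0$: by Lemma~\ref{lem.lots.in.h0}(1), $\iota(\lieh)$ embeds homomorphically in $\so(p,q)$, and then by downward induction on $p$ (the base cases $p\le 3$ being the propositions above, where $p'$ is forced to be $1$), one gets a contradiction with $p'\geq 2$. So we may assume some $\overline{\lieh}_{-\alpha_{i'}}\neq 0$.

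Next I would split according to how many of the $\overline{\lieh}_{-\alpha_{i'}}$ are nonzero. If at least two are nonzero, Lemma~\ref{lem.h.mod.h0}(1) gives an $[s]$-isotropic subspace of $\lieh/\lieh^0$ of dimension $>2p'-1$ (or the exceptional $p'=2$, $\rho(\beta_1)=\alpha_1+\alpha_2$ case with an isotropic $3$-space); since any $[s]$-isotropic subspace has dimension at most $p$, this forces $p>2p'-1$ outright, unless we are in the exceptional subcase, which for $p'=2$, $p=3$ is excluded by Proposition~\ref{prop.p=3}. If exactly one $\overline{\lieh}_{-\alpha_{i'}}$ is nonzero, then Lemma~\ref{lem.h.mod.h0}(2) gives an $[s]$-isotropic subspace of dimension at least $2p'-1$, whence $p\geq 2p'-1$ immediately. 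This establishes the bound in all cases.

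For the equality case $p=2p'-1$, we must be in the ``exactly one $\overline{\lieh}_{-\alpha_{i'}}$ nonzero'' situation with the maximal $[s]$-isotropic subspace exactly of dimension $2p'-1=p$, so $\overline{\lieh}$ itself contains a maximal isotropic subspace of $\BR^{p,q}$; moreover Lemma~\ref{lem.h.mod.h0}(2) pins down $\rho(2\beta_1)=2\alpha_{i'}+\alpha'$ with $\alpha'\in\{0,\alpha_{j'},\alpha_{j'}+\alpha_{k'}\}$. Since $\lieh_{-\alpha_{j'}}\subset\lieh^0$ for all $j'\neq i'$, the $\ad\b_H$-invariance of $\lieh^0$ (Proposition~\ref{prop.vincent}(1)) together with Lemma~\ref{lem.about.h0} propagates this to show $\lieh_{\alpha_{j'}}\subset\lieh^0$ for all $j'$ including $i'$ (using that a single $\overline{\lieh}_{-\alpha_{i'}}\neq 0$ together with $\a_H\subset\lieh^0$ forces the positive root spaces into $\lieh^0$). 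Then Lemma~\ref{lem.root.lifts} applies to every $\alpha_{i'}$: each $\check{\Delta}(2\alpha_{i'})$ contains an initial or terminal long root and each is a sum of two distinct elements of $\check{\Delta}(\alpha_{i'})$. A counting argument on root strings in $B_{p+1}$—there is only enough ``room'' for the $p'$ mutually orthogonal $\mathfrak{sl}_2$-triples if the $\check{\Delta}(\alpha_{i'})$ are forced into pairs of short roots $\{\beta_{k},\beta_{\ell}\}$—combined with the surjectivity of $\rho|_{\check{\Delta}}$ from Lemma~\ref{lem.root.surj} onto $\Delta_H^+$ (which must in particular hit each $\alpha_{i'}-\alpha_{j'}$ and $\alpha_{i'}+\alpha_{j'}$) forces $\check{\Delta}(\alpha_{i'})=\{\beta_{\sigma(i')},\beta_{\tau(i')}\}$ to consist of two short roots, after relabeling the $\beta$'s.

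The main obstacle will be the last step: the combinatorial bookkeeping showing that $\check{\Delta}(\alpha_{i'})$ must be exactly two short roots for \emph{every} $i'$, ruling out mixed short/long configurations and overlaps between different $\check{\Delta}(\alpha_{i'})$. This is where the rank-$3$ argument in Proposition~\ref{prop.p=3} (the impossibility of two disjoint triples of initial roots, the contradiction extracted from $[Z,Y]=0$ versus a nonzero bracket component) must be generalized to arbitrary $p'$; I expect it to require carefully tracking which pairs $(\beta_k,\beta_\ell)$ can sum to a root of $\check{\Delta}(2\alpha_{i'})$, and then using Lemma~\ref{lem.root.surj} to force the remaining roots of $\Delta_G^+$ to be consumed by images $\rho^{-1}(\alpha_{i'}-\alpha_{j'})$, $\rho^{-1}(\alpha_{i'}+\alpha_{j'})$, leaving no slack and thereby determining $\rho$ up to the Weyl group of $B_{p+1}$.
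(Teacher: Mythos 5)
Your argument for the inequality $p\geq 2p'-1$ is sound and matches the paper: you dispose of $\lieh=\lieh^0$ by descent/induction, then split on how many $\overline{\lieh}_{-\alpha_{i'}}$ are nonzero and apply Lemma~\ref{lem.h.mod.h0} against the bound $\dim\leq p$ on $[s]$-isotropic subspaces of $\lieh/\lieh^0$, invoking Proposition~\ref{prop.p=3} to clean up the exceptional $p'=2$, $p=3$ situation.

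The gap is in the equality case, and it is genuine. You acknowledge the ``main obstacle'' is showing each $\check\Delta(\alpha_{i'})$ is exactly two short roots, and you propose to do it by counting root strings in $B_{p+1}$ via Lemma~\ref{lem.root.lifts} plus surjectivity of $\rho$. But that is not how the paper closes it, and it is unclear your counting would close without more. Two specific issues. First, Lemma~\ref{lem.root.lifts} alone does not rule out $\check\Delta(\alpha_{i'})$ containing an initial--terminal pair $\{\beta_i-\beta_k,\,\beta_j+\beta_k\}$ summing to $\beta_i+\beta_j\in\check\Delta(2\alpha_{i'})$; distinguishing that configuration from the short-root pair $\{\beta_i,\beta_j\}$ is exactly the content you leave unresolved, and Lemma~\ref{lem.root.surj} by itself does not force the overlap/disjointness pattern you need. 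Second, you never eliminate the subcase $\rho(2\beta_1)=2\alpha_{i'}+\alpha'$ with $\alpha'\neq 0$; Lemma~\ref{lem.h.mod.h0}(2) allows it, and the conclusion you want is false there (the $\check\Delta(\alpha_{i'})$ would necessarily involve long roots), so it must be explicitly killed. The paper does both jobs at once by a structural argument you don't have: after reducing to $\rho(\beta_1)=\alpha_{i'}$, it sets $I=\oplus_{j'}\lieh_{-\alpha_{i'}-\alpha_{j'}}$ (resp.\ the analogous space in Case~2), observes that $\iota(I)$ is a $\check\a$-weight-graded maximal isotropic subspace of $\lieg_-$, hence of the form $\bigoplus_{i\in A}\lieg_{-\beta_1+\beta_i}\oplus\bigoplus_{j\in B}\lieg_{-\beta_1-\beta_j}$, and then uses the bracket identity~(\ref{eqn.iota.in.p}) with $\check{\lieh}_{\alpha_{j'}}$ (whose roots are positive) to force $\Sigma_I=\{-\beta_1-\beta_i:i>1\}$. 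That pins down $\rho$ on all $\beta_i$ with no slack and yields exactly two short roots over each $\alpha_{j'}$ by a dimension count ($\dim\lieh_{-\alpha_{i'}-\alpha_{j'}}=2$ vs.\ $\dim\lieg_{-\beta_1-\beta_i}=1$); Case~2 is then eliminated because the two elements of $\check\Delta(\alpha_{i'})$ determined this way fail to sum to the element of $\check\Delta(2\alpha_{i'})$, contradicting Lemma~\ref{lem.root.lifts}(1). In short: your skeleton and the bound are right, but the weight-space analysis of $\iota(I)$ inside $\lieg_-$, which is the engine of the equality case, is missing, and the $\alpha'\neq 0$ subcase is unaddressed.
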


\begin{proof}
Let $p=1$ and let $\alpha = \alpha_1$ be the simple positive root of $\Delta_H^+$. It is shown on \cite[p.~9]{pecastaing.rank1} that $\b_H \subset \lieh^0$.
Lemma~\ref{lem.root.lifts}\,(2) says
there is an initial or terminal root in $\check{\Delta}(2 \alpha)$. In rank 2, the initial root of $\Delta_G^+$ cannot be obtained as a sum of two positive roots, so $\check{\Delta}(2 \alpha)$ must contain the terminal root. By Lemma~\ref{lem.root.lifts}\,(1), $\{ \beta_1, \beta_2 \} \subseteq \check{\Delta}(\alpha)$. In particular, $\theta(\beta_1) = \theta(\beta_2) = \alpha$, which determines $\theta$. We conclude $\check{\Delta}(\alpha) = \{ \beta_1, \beta_2 \}$.

Next let $p \geq 3$.
If $\lieh_{\alpha_{i'}} \subset \h^0$ for all $i'$, then, as seen above multiple times, Lemma~\ref{lem.lots.in.h0}\,(1) and (\ref{eqn.iota.in.p}) lead to a homomorphism $\h \rightarrow \q_{ss} \cong \so(p,q)$ and thence, up to isogeny of $H$, to an~embedding $(\h, {\rm Ad}_\h B_H)$ into $(\g, {\rm Ad}_\g P)$, by Theorem~\ref{thm.embedding}. Proposition~\ref{prop.sutoso.ineq} then gives ${p' \leq p/2 < (p+1)/2}$, contradicting the hypothesis of equality. Thus there is $i'$ such that $\overline{\lieh}_{\alpha_{i'}} \neq 0$.
If $p=3$ and $p' = 2$, then we will now show that $\overline{\lieh}_{ - \alpha_{j'}} = 0$ $\forall j' \neq i'$.

Suppose that both $\overline{\lieh}_{- \alpha_1}$ and $\overline{\lieh}_{- \alpha_2}$ are nonzero. By Lemma~\ref{lem.about.h0}\,(2),
\[ \lieh_{- \alpha_1 - \alpha_2} \cap \lieh^0 = \lieh_{- 2 \alpha_{k'}} \cap \lieh^0 = 0 , \qquad k'=1,2\]
An $[s]$-isotropic subspace has dimension $\leq 3$, so Lemma~\ref{lem.h.mod.h0}\,(1) implies $\theta(\beta_1) = \alpha_{1} + \alpha_{2}$. A~maximal isotropic subspace for $[s]$ is \smash{$\overline{\lieh}_{- 2 \alpha_1} + \overline{\lieh}_{- \alpha_1} + \overline{\lieh}_{ - \alpha_2}$}, and all three summands are one-dimensional. Then $\lieh_{- \alpha_{k'}} \cap \lieh^0 \neq 0$ for $k'=1,2$. From the $\ad \b_H$-invariance of $\lieh^0$, we obtain $\a_H \subset \lieh^0$ and therefore $\lieh_{\alpha} \subset \lieh^0$ for all $\alpha \in \Delta_H^+$ by Lemma~\ref{lem.about.h0}\,(1). Using Lemma~\ref{lem.lots.in.h0}, we will henceforth assume $\check{\lieh}_{\alpha} = \iota(\lieh_\alpha)$ for all $\alpha \in \Delta_H^+$.

For $k'=1,2$, let
\[I_{k'} = \bigoplus_{i'=1,2} \lieh_{- \alpha_{i'}} \oplus \lieh_{- 2 \alpha_{k'}}, \]
The projections \smash{$\overline{\iota(I_{k'})}$} are maximal isotropic subspaces of $\lieg/\p$ for $k'=1,2$, but they are not mutually orthogonal.
 The weights for $\a_G$ on $\lieg/\p \cong \BR^{p,q}$ are
\[\Sigma = \{ - \beta_1 \pm \beta \mid \beta = 0, \beta_2, \beta_3, \beta_4 \},\]
if $p=q$, then $\beta=0$ does not occur.
The projected root spaces $\overline{\lieh}_\alpha$ for $\a_H$ have images \smash{$\overline{\iota(\lieh_{\alpha})}$} which decompose into the sum of their intersections with the weight spaces of $\check{\a}$ in $\lieg/\p$.
Since $\theta(- \beta_1) = - \alpha_1 - \alpha_2$ and $\overline{\lieh}_{\alpha_{k'}}$ is $1$-dimensional for $k'=1,2$, there are $i,j > 1$ such that
\[ \overline{\iota(\lieh_{-\alpha_1})} \subset \overline{\lieg}_{- \beta_1 \pm \beta_i} \qquad \mbox{and} \qquad \overline{\iota(\lieh_{-\alpha_2})} \subset \overline{\lieg}_{- \beta_1 \pm \beta_j}.\]
Therefore, $\theta(\beta_i) = \pm \alpha_2$ and $\theta(\beta_j) = \pm \alpha_1$; in particular, $i \neq j$.

The images \smash{$\overline{\iota\bigl(\lieh_{- 2 \alpha_{k'}}\bigr)}$} do not meet $\overline{\lieg}_{- \beta_1}$ for $k'=1,2$, also because $\theta(- \beta_1) = - \alpha_1 - \alpha_2$.
A~maximal isotropic subspace of
 $ \bigoplus_{i > 1} \overline{\lieg}_{- \beta_1 \pm \beta_i}$ is of the form
 \begin{equation*}
 \biggl( \bigoplus_{k \in A} \overline{\lieg}_{- \beta_1 + \beta_k} \biggr) \oplus \biggl( \bigoplus_{\ell \in B} \overline{\lieg}_{- \beta_1 - \beta_\ell} \biggr)
\end{equation*}
 for $A \sqcup B = \{ 2, 3,4 \}$, since the scalar product gives a nondegenerate pairing of each $\overline{\lieg}_{- \beta_1 - \beta_k}$ with $\overline{\lieg}_{- \beta_1 + \beta_k}$. There is therefore $k \neq 1, i, j$ such that
 \[ \overline{\iota\bigl(\lieh_{- 2 \alpha_{k'}}\bigr)} \subset \overline{\lieg}_{- \beta_1 - \beta_k} \qquad \mbox{and} \qquad \overline{\iota\bigl(\lieh_{- 2 \alpha_{\ell'}}\bigr)} \subset \overline{\lieg}_{- \beta_1 + \beta_k}, \qquad \bigl\{ k', \ell' \bigr\} = \{ 1,2 \}.\]
 Now all values of $\theta$ are determined, with $\theta(\beta_k) = \alpha_{k'} - \alpha_{\ell'}$.
Note that because $\overline{\lieh}_{- \alpha_1 - \alpha_2} \neq 0$ and~$- \beta_1$ is the only possible weight in $\Sigma$ mapping to $- \alpha_1 - \alpha_2$, it must indeed be a weight, and~${p < q}$.

By Lemma~\ref{lem.root.lifts}\,(2), each $\check{\Delta}(2 \alpha_{m'})$, for~${m'=1,2}$, contains an initial or terminal root. From the determined values of $\theta$,
\[ \beta_1 + \beta_k \in \check{\Delta}(2 \alpha_{k'}), \qquad \beta_1 - \beta_k \in \check{\Delta}(2 \alpha_{\ell'}) \]
By Lemma~\ref{lem.root.lifts}\,(1), each of these roots is obtained as a sum of two distinct roots of $\check{\Delta}(\alpha_{m'})$ for~${m'=1,2}$. In particular,
\[\beta_1 - \beta_k = (\beta_1 - \beta_i) + (\beta_i - \beta_k) = (\beta_1 - \beta_j) + (\beta_j - \beta_k)\]
are the two possibilities for the summands from $\check{\Delta}(\alpha_{\ell'})$. The value $\theta(\beta_1 - \beta_i) = \alpha_{\ell'}$
only if~${\theta(\beta_i) = \alpha_2}$ and $\ell'=1$;
then also $\theta(\beta_j- \beta_k) = \alpha_1$. The alternative $\ell'=2$ is equally possible provided $\theta(\beta_j) = \alpha_1$. We will assume $\ell'=1$ for the remainder of the proof, which is easily adapted for $\ell'=2$.

Next, there are two possible decompositions into summands from $\check{\Delta}(\alpha_{2})$:
\[ \beta_1 + \beta_k = (\beta_1) + (\beta_k) = (\beta_1 - \beta_j) + (\beta_j + \beta_k)\]
but since $\theta(\beta_1), \theta(\beta_k) \neq \alpha_2$, the only possibility is $\beta_1 - \beta_j, \beta_j + \beta_k \in \check{\Delta}(\alpha_2)$. Thus $\theta(\beta_j) = \alpha_1$.

Observe that $\check{\Delta}(2 \alpha_2) = \{ \beta_1 + \beta_k \}$, as there are no other elements of $\Delta_G^+$ in $\theta^{-1}(2 \alpha_2)$. Let~${X \in \lieh_{2 \alpha_2}}$, so $\iota(X) \in \lieg_{\beta_1 + \beta_k}$. Let~${Y \in \lieh_{\alpha_1}}$ with $\iota(Y)_{\beta_i - \beta_k} \neq 0$. Then
\[[\iota(X), \iota(Y)]_{\beta_1 + \beta_i} = [\iota(X)_{\beta_1 + \beta_k} , \iota(Y)_{\beta_i - \beta_k}] \neq 0.\]
Therefore, $[\iota(X), \iota(Y)] \neq 0$, which implies by (\ref{eqn.iota.in.p}) and injectivity of $\iota$ that $[X,Y] \neq 0$. But this is a~contradiction because $2 \alpha_2 + \alpha_1 \notin \Delta_H$. This completes the proof of the claim for~${p=3}$,~${p'=2}$.

For $p > 3$, Lemma~\ref{lem.h.mod.h0}\,(1) gives the conclusion as above that $\lieh_{- \alpha_{j'}} \subset \lieh^0$ for all $j'\neq i'$. The $\ad \b_H$-invariance of \smash{$\lieh^0$} from Proposition~\ref{prop.vincent}\,(1) implies that \smash{$\lieh_{ \alpha_{j'}} \subset \lieh^0$} for all $j' \neq i'$. By Lemma~\ref{lem.about.h0}\,(3), \smash{$\lieh_{- \alpha_{i'}} \cap \lieh^0 = 0$}.
Now
let $\theta(2 \beta_1) = 2 \alpha_{i'} + \alpha'$
as in Lemma~\ref{lem.h.mod.h0}\,(2).

\textit{Case $1$}: $\theta(\beta_1) = \alpha_{i'}$. As seen above,
root spaces are $[s]$-orthogonal unless the roots add up to the conformal factor, in this case $- 2 \alpha_{i'}$. Therefore, the subspace \smash{$\bigoplus_{j'} {\lieh}_{- \alpha_{i'} - \alpha_{j'}}$} projects modulo~$\lieh^0$ to an isotropic subspace for $[s]$; by Lemma~\ref{lem.about.h0}\,(2), it has trivial intersection with~$\lieh^0$, so the projection has dimension $2p'-1 = p$.
Any \smash{$\overline{\lieh}_{\alpha_{i'} - \alpha_{j'}}$}, with $j' \neq i'$, could be added to make a~larger isotropic subspace; they must therefore be $0$ -- that is $\lieh_{\alpha_{i'} - \alpha_{j'}} \subset \lieh^0$ for all $j' \neq i'$.
 Because~\smash{${\bigl[\lieh_{ \alpha_{j'}}, \lieh_{\alpha_{i'} - \alpha_{j'}} \bigr] = \lieh_{\alpha_{i'} }}$},
Proposition~\ref{prop.vincent}\,(1) implies \smash{$\lieh_{\alpha_{i'}} \subset \lieh^0$} as well.

Denote by
\[J = \lieh_{- \alpha_{i'}} \oplus \bigoplus_{j'} \lieh_{- \alpha_{i'} - \alpha_{j'}} \qquad \mbox{and} \qquad I = \bigoplus_{j'} \lieh_{- \alpha_{i'} - \alpha_{j'}} .\]
As above,
the image \smash{$\overline{\iota(J)}$} is a sum of $\check{\a}$-weight spaces in $\lieg/\p$, and
the image \smash{$\overline{\iota(I)}$} is a maximal isotropic subspace of $\lieg/\p \cong \BR^{p,q}$.
The weights for $\a_G$ on $\lieg/\p$, for $p < q$, are
\[\Sigma = \{ - \beta_1 \pm \beta \mid \beta = 0, \beta_2, \dots, \beta_{p+1} \},\]
and when $p=q$ then $\beta=0$ does not occur.
Let $\Sigma_I$ comprise the roots in $\Sigma$ for which \smash{$\overline{\iota(I)}$} has nontrivial projection on the corresponding $\a_G$-weight space. Elements of $\Sigma_I$ must have the additional property that $\theta(- \beta_1 \pm \beta) = - \alpha_{i'} - \alpha_{j'}$ for some $j'$; therefore, $- \beta_1 \notin \Sigma_I$.
 As in the previous proof, a maximal isotropic subspace of
 $ \bigoplus_{i > 1} \lieg_{- \beta_1 \pm \beta_i}$ is of the form
 \begin{equation}
 \label{eqn.iso.subspace}
 \biggl( \bigoplus_{i \in A} \lieg_{- \beta_1 + \beta_i} \biggr) \oplus \biggl( \bigoplus_{j \in B} \lieg_{- \beta_1 - \beta_j} \biggr)
\end{equation}
 for $A \sqcup B = \{ 2, \dots, p+1 \}$
 The corresponding $\Sigma_I$ is $\{ - \beta_1 + \beta_i \}_{i \in A}$ union $\{ - \beta_1 - \beta_i \}_{i \in B}$.
Thus all values of $\theta$ are determined; in particular $\theta(\beta_i) \neq 0$ for all $i > 1$.

Now
\smash{$\overline{\iota\bigl(\lieh_{- \alpha_{i'}}\bigr)} \subseteq \overline{\lieg}_{- \beta_1}$} because $\theta(- \beta_1 \pm \beta_i) \neq - \alpha_{i'}$ for any $i$; in particular, $p < q$ in this case.
By (\ref{eqn.iota.in.p}),
\[\bigl[ \check{\lieh}_{\alpha_{j'}}, \iota\bigl(\lieh_{- \alpha_{i'} - \alpha_{j'}}\bigr)\bigr]= \bigl[ \iota\bigl(\lieh_{\alpha_{j'}}\bigr), \iota\bigl(\lieh_{- \alpha_{i'} - \alpha_{j'}}\bigr)\bigr] = \iota\bigl(\lieh_{- \alpha_{i'}}\bigr),\]
 which means that for each $- \beta_1 \pm \beta_i \in \Sigma_I$, there is some $j'$ and there is $\beta' \in \check{\Delta}(\alpha_{j'})$ such that $\pm \beta_i + \beta' = 0$. As $\beta'$ is a positive root, we conclude $\Sigma_I = \{ - \beta_1 - \beta_i \mid i > 1 \}$, and \smash{$\overline{\iota(I)} = \bigoplus_{j > 1} \overline{\lieg}_{- \beta_1 - \beta_j}$}. 
Because $\overline{\lieh}_{- \alpha_{i'} - \alpha_{j'}}$ with $j' \neq i'$ is two-dimensional while each $\overline{\lieg}_{- \beta_1 - \beta_i}$ is one-dimensional, there~are at least two distinct $\beta_j$ with $\rho(\beta_j) = \alpha_{j'}$ for each $j' \neq i'$. There is moreover some $i \neq 1$ such~that $\theta(\beta_i) = \alpha_{i'}$, and such that \smash{$\overline{\iota(\lieh_{- 2 \alpha_{i'} })} = \overline{\lieg}_{- \beta_1 - \beta_i}$}. Since ${p' = (p+1)/2}$, there are exactly two indices $j$ such that $\theta(\beta_j) = \alpha_{j'}$ for all $j'$. The conclusion in this subcase follows.

\textit{Case $2$}: $\theta(2 \beta_1) = 2 \alpha_{i'} + \alpha'$ with $\alpha' = \alpha_{j'}$ or $\alpha_{j'} + \alpha_{\ell'}$ for some $j' \neq i'$, possibly with $j' = \ell'$. In this case $\overline{\lieh}_{- \alpha_{i'}}$ is $[s]$-isotropic, and belongs to an isotropic subspace together with \smash{$\bigoplus_{j'\neq k'} \overline{\lieh}_{- \alpha_{i'} - \alpha_{k'}}$}. By Lemma~\ref{lem.about.h0}\,(2), each \smash{$\lieh_{- \alpha_{i'} - \alpha_{k'}}$} has trivial intersection with \smash{$\lieh^0$}; also \smash{$\overline{\lieh}_{- \alpha_{i'}}$} has dimension $2$. As in the previous subcase, any \smash{$\overline{\lieh}_{\alpha_{i'} - \alpha_{k'}}$} for $k' \neq i'$ could be added to make a~larger isotropic subspace, so these root spaces are contained in $\lieh^0$, and therefore also \smash{$\lieh_{\alpha_{i'}} \subset \lieh^0$}.

Now let $J$ be as above, while
\[I = \lieh_{- \alpha_{i'}} \oplus \bigoplus_{j'\neq k'} \lieh_{- \alpha_{i'} - \alpha_{k'}}.\]
As before,
\smash{$\overline{\iota(I)}$} is a maximal isotropic subspace of $\lieg/\p \cong \BR^{p,q}$. The subset $\Sigma_I \subset \Sigma$ is defined as above.
 Since $\theta(- \beta_1)$ is not a root occurring in $I$, then $- \beta_1 \notin \Sigma_I$. Thus \smash{$\overline{\iota(I)}$} has the same form as (\ref{eqn.iso.subspace}). By dimension considerations, the restriction of $\theta$ to $\Sigma_I$ is a 2-to-1 map to the roots of $\Delta_H$ occurring in $I$, except to $- 2 \alpha_{i'}$, onto which it is 1-to-1. The values of $\theta(\beta_i)$ are thus determined for all $i$.

Now $ \check{\Delta}(\alpha_{i'})$ has the form $\{ \beta_1 +\epsilon \beta_i, \beta_1 + \epsilon' \beta_j \}$ for some $i \neq j$ different from $1$ and $\epsilon, \epsilon' = \pm 1$. But $\check{\Delta}(2\alpha_{i'})$ also comprises a root of the form $\beta_1 \pm \beta_k$ for $k \neq 1$.
 The two distinct elements of $\check{\Delta}(\alpha_{i'})$ do not sum to the element of $\check{\Delta}(2\alpha_{i'})$
contradicting Lemma~\ref{lem.root.lifts}\,(1). This subcase is eliminated and the proof is finished.
\end{proof}

\section{Embedding theorem in the conformal tractor bundle}

In this section, we work with the conformal tractor bundle to complete the proof of Theorem~\ref{thm.conf.application}.
Therefore, we let $H$, $M$, and $(\lieg,P)$ be as in the theorem, and assume $p = 2p'-1$.

\subsection{Local conformal flow with balanced isotropy}

By Proposition~\ref{prop.sutoso.roots}, there is a partition of the short roots $\beta_1, \dots, \beta_{p+1}$ of $\Delta_G^+$ into $p'$ pairs $\bigl(\beta_{k_{i'}}, \beta_{\ell_{i'}}\bigr)$, $i' = 1, \dots, p'$, such that $\theta\bigl(\beta_{k_{i'}} - \beta_{\ell_{i'}}\bigr) = 0$ for each $i'$. As $\check{\a}$ is $p'$-dimensional and $p' = (p+1)/2$, a basis for $\check{\a}$ is \smash{$\bigl\{ \check{Y}_{k_{i'}} + \check{Y}_{\ell_{i'}} \mid i' = 1, \dots, p' \bigr\}$}, where $\check{Y}_j$ is the coroot of $\a_G$ dual to~$\beta_j$ for each $j$. Let $i'$ be such that $k_{i'} = 1$. By applying an inner automorphism of $P$ if necessary, we may assume $\ell_{i'} = 2$. Now set $\check{Y} = \check{Y}_1 + \check{Y}_2 \in \check{\a}$.

The one-parameter subgroup
\[\big\{ h^t = {\rm e}^{t \check{Y}} \big\} = {\rm diag}\bigl(e^{t}, {\rm e}^{t}, 1, \dots, 1, {\rm e}^{-t}, {\rm e}^{-t}\bigr) < P,\]
and it belongs to $\check{B}^{\rm solv}$. Recall that $\check{B}^{\rm solv}$ was obtained from the embedding theorem for a~particular $x_0 \in M$ and $\hat{x}_0 \in \hat{M}_{x_0}$. By Corollary~\ref{cor.emb.frob}, there is a one-parameter subgroup ${\bigl\{ \varphi^t_Y \bigr\} < \Aut^{\rm loc}_{x_0}(M,\omega)}$ with isotropy $\bigl\{ h^t \bigr\}$ with respect to $\hat{x}_0$.
The representation of $\bigl\{ h^t \bigr\}$ on~$\BR^{p,q}$ is ${\rm diag}\bigl(1, {\rm e}^{-t}, \dots, {\rm e}^{-t}, {\rm e}^{-2t}\bigr)$, and is said to be of \emph{balanced linear} type because the dilation and hyperbolic isometric components balance each other.

\subsection{Isotropic tractor solutions}

Existence of a flow with the dynamics of $\bigl\{ \varphi^t_Y \bigr\}$ does not necessarily imply local conformal flatness~-- see Alekseevsky's important examples in~\cite{alekseevski.selfsim}~-- but it does impose strong restrictions on the Weyl curvature. These in turn lead to local tractor solutions.

For $v \in \BR^{p,q}$ an isotropic vector, the \emph{annihilator of the flag} $\BR v \subset v^\perp \subset \BR^{p,q}$ comprises the nilpotent subalgebra of $\so(p,q)$ which preserves the flag and is zero on the associated graded space $\BR v \oplus \bigl( v^\perp/\BR v \bigr) \oplus \bigl( \BR^{p,q}/v^\perp \bigr)$; it will be denoted $\n(v)$.

Let $\mathcal{V}$ be the standard conformal tractor bundle associated to the representation $\BR^{p+1,q+1}$ of~${P < {\rm SO}(p+1,q+1)}$, with standard conformal tractor connection $\nabla = \nabla^\omega$. Denote by $\mathcal{S}$ the $\nabla$-parallel sections of $\mathcal{V}$, and recall from Section~\ref{sec.conf.tractor} that they correspond to almost-Einstein rescalings of a metric in the given conformal class on $M$.

The following consequences of balanced linear isotropy are already known in the conformal Lorentzian case (see \cite[Lemma~6.5 and Theorem~1.3\,(2)]{frances.degenerescence}, \cite[Section~4]{mp.confdambra}, \cite[Proposition~5.1]{dfmpz.balanced}),
and generalize to higher signature as follows.

\begin{prop}[compare \cite{dfmpz.balanced,frances.degenerescence,mp.confdambra}]
\label{prop.balanced.weyl}
Let $(M,g)$ be a pseudo-Riemannian manifold, and suppose that the stabilizer of $x_0 \in M$ contains a conformal flow \smash{$\bigl\{ \varphi^t_Y \bigr\}$} with isotropy generated by $\check{Y} = {\rm diag}(1,1, 0 , \dots, 0, -1,-1 )$ with respect to $\hat{x}_0 \in \pi^{-1}(x_0)$. Then there are
\begin{enumerate}\itemsep=0pt
\item[$(1)$] a Ricci-flat metric $g_0 \in [ g |_U]$ in a neighborhood $U$ of $x_0$, and
\item[$(2)$] an isotropic vector field $X \in \mathcal{X}(U)$ which is parallel for the Levi-Civita connection of $g_0$, such that
\item[$(3)$] the Weyl tensor $W_x$ has values in $\n(X_x)$ for all $x \in U$.
\item[$(4)$] Together these yield a $2$-dimensional totally isotropic space $\mathcal{I}_U$ of parallel sections of $ \mathcal{V} |_U$, having trivial intersection with $\mathcal{V}^0$.
\end{enumerate}
If $(M,g)$ is analytic and simply connected, then the solutions in $\mathcal{I}_U$ extend to $M$, comprising a~subspace $\mathcal{I} \subset \mathcal{S}$, and $X$ extends to an isotropic conformal vector field on $M$.
\end{prop}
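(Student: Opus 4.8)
The plan is to establish (1)--(4) locally, for arbitrary $(M,g)$, by the dynamical degeneration method of \cite{frances.degenerescence,mp.confdambra} adapted to signature $(p,q)$; to exhibit the two parallel tractor solutions explicitly; and to promote everything to $M$ by a holonomy argument using analyticity and simple connectedness.  Write $h^t = e^{t\check Y} = \mbox{diag}(e^t,e^t,1,\ldots,1,e^{-t},e^{-t})$ for the isotropy of $\varphi^t := \varphi^t_Y$ with respect to $\hat{x}_0$, acting on the tractor fibre $\mathcal V_{x_0}\cong\BR^{p+1,q+1}$.  First I would record the eigenstructure: the $e^{\pm t}$-eigenspaces are null $2$-planes $E^{\pm}$, dual to one another under the tractor metric, while $E^0 := (E^+\oplus E^-)^\perp$ has signature $(p-1,q-1)$; the induced action on $T_{x_0}M$ is $\mbox{diag}(1,e^{-t},\ldots,e^{-t},e^{-2t})$, which is non-expanding for $t\ge 0$ and has a single neutral line $\ell_0$, the weight space of $\beta_2-\beta_1$, which is null, pairing with the $e^{-2t}$-eigenline, the weight space of $-\beta_1-\beta_2$.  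Then I would run the degeneration argument: since $\varphi^t$ fixes $x_0$ and preserves $\omega$, the Cartan curvature function has $\kappa(\hat{x}_0)$ fixed by $h^t$ for every $t$, hence lying in the zero $\check Y$-weight subspace of the curvature module; and since the flow is non-expanding, rescaling $\kappa$ by $h^{t}$ along the strong-stable orbits of $\varphi^t$ --- whose lifts to $\hat{M}$ return towards $\hat{M}_{x_0}$ only after applying a $P$-element diverging like $h^{-t}$ --- forces this same algebraic degeneracy at every point of a neighbourhood $U$ of $x_0$, the neutral direction $\ell_0$ being handled by observing that it integrates to a curve of $\varphi^t$-fixed points (as in the homogeneous model, where $(\beta_2-\beta_1)(\check Y)=0$) and sweeping $U$ out by the strong-stable leaves through that curve.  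Reading this degeneracy off through the splitting (\ref{eqn.conf.splitting}) and the formulas (\ref{eqn.conf.connxn})--(\ref{eqn.conf.curv}) then yields a scale $g_0\in[g|_U]$ with vanishing Schouten tensor, hence Ricci-flat, giving (1); a $g_0$-parallel null vector field $X$ on $U$, after shrinking, giving (2); and $W(Y,Z)\in\n(X_x)$ for all $x\in U$ and $Y,Z\in T_xM$, giving (3).  Conceptually, this degeneration is the statement that the local conformal holonomy of $\nabla=\nabla^\omega$ over $U$ fixes the null $2$-plane $E^{-}\subset\mathcal V_{x_0}$ pointwise.

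For (4), I would pass to the splitting determined by $g_0$, so that $\nabla^\omega$ is given by (\ref{eqn.conf.connxn}) with $P\equiv 0$, and write $X^* = d\sigma$, possible on the simply connected $U$ since $X^*$ is $g_0$-parallel, hence closed.  A direct check from (\ref{eqn.conf.connxn}), using $P\equiv 0$ and $\nabla^{g_0}X=0$, shows that
$$ s_1 = \left(\begin{array}{c} 0\\ 0\\ 1\end{array}\right), \qquad s_2 = \left(\begin{array}{c} 0\\ X\\ \sigma\end{array}\right) $$
are both $\nabla^\omega$-parallel.  Their tractor inner products vanish ($\langle s_1,s_1\rangle = 0 = \langle s_1,s_2\rangle$ and $\langle s_2,s_2\rangle = g_0(X,X)=0$, as $X$ is null), so $\mathcal I_U := \mbox{span}(s_1,s_2)$ is totally isotropic; it is $2$-dimensional because $\Pi^{\mathcal V}(a s_1+b s_2) = a+b\sigma$ vanishes only for $a=b=0$, the function $\sigma$ being nonconstant since $d\sigma = X^*\ne 0$.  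The same computation shows $\Pi^{\mathcal V}$ is injective on $\mathcal I_U$, so no nonzero element of $\mathcal I_U$ is a section of $\mathcal V^0=\ker\Pi^{\mathcal V}$.  This gives (4).

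For the final assertion, suppose $(M,g)$ is real-analytic and simply connected, so that $\omega$, $\nabla^\omega$, the Cartan curvature, and \v{C}ap's canonical connection on $\mathcal A$ from Section \ref{sec.intro.aut} are all real-analytic.  I would use the classical fact that, for a real-analytic linear connection, the restricted holonomy algebra at a point equals the infinitesimal holonomy algebra there, which depends only on the germ of the connection; hence the local restricted holonomy group of $\nabla^\omega$ at $x_0$ computed over $U$ has the same Lie algebra as, and therefore coincides with, the global restricted holonomy group, which by $\pi_1(M)=0$ is the full group $\mathrm{Hol}(\nabla^\omega,x_0)$.  Since the members of $\mathcal I_U$ are parallel over $U$, the $2$-plane $\{\,s(x_0):s\in\mathcal I_U\,\}\subset\mathcal V_{x_0}$ is fixed by this group; each fixed vector is the value at $x_0$ of a unique global $\nabla^\omega$-parallel section, which restricts over $U$ to the corresponding local solution.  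This extends $\mathcal I_U$ to a $2$-dimensional $\mathcal I\subset\mathcal S$, still totally isotropic because the tractor metric is $\nabla^\omega$-parallel.  Finally, $X$ is $g_0$-Killing, hence a conformal vector field for $[g|_U]=[g_0]$, hence an infinitesimal automorphism of the conformal Cartan geometry over $U$, corresponding to a section of $\mathcal A|_U$ parallel for \v{C}ap's connection; the same local-to-global holonomy argument on $\mathcal A$ over $M$ extends it to a global conformal vector field $\hat X$ with $\hat X|_U=X$.  The function $g(\hat X,\hat X)$ is real-analytic and vanishes on the open set $U$, nullity being conformally invariant, hence vanishes on all of $M$, so $\hat X$ is an isotropic conformal vector field on $M$.

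I expect the main obstacle to be the local degeneration argument: controlling the flow off its strong-stable set (the neutral direction $\ell_0$) in order to obtain the curvature degeneracy on an honest neighbourhood, and carrying the Lorentzian-signature facts about the stabilizer of a null vector in $\so(p,q)$ and about the $\check Y$-graded curvature module over to general signature $(p,q)$.  The explicit parallel tractors in (4) and the holonomy-based extension should be routine.
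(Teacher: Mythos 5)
Your proposal correctly establishes item (4) and the final globalization, but the local part, items (1)--(3), has a real gap.

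For (1)--(3), you assert that the Cartan-curvature degeneracy at $\hat{x}_0$ propagates to a neighbourhood $U$ and that ``reading this degeneracy off'' yields a Ricci-flat scale $g_0$ with a $g_0$-parallel null field $X$. That last step is not a computation one can ``read off'': knowing that the Weyl part of the curvature takes values in $\n(e_n)$ on a neighbourhood does not by itself produce a metric with vanishing Schouten tensor or a parallel null vector. What the paper actually does is show that, because $\kappa=\hat W$ has values in $\n(e_n)$ on a suitable reduction $\mathcal R_U\subset\hat M$, the distribution $\omega^{-1}(\BR^{p,q}+\n(e_n))$ is integrable on $\mathcal R_U$; the leaf through $\hat x_0$ is then a Frobenius reduction of the Cartan bundle to the unipotent $U(e_n)$, and it is this reduction that \emph{is} a Ricci-flat representative $g_0$ with $X$ parallel. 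Your proposal omits this integrability/reduction step entirely, and it is the nontrivial geometric content of (1)--(3); your own closing remark that this is ``the main obstacle'' is accurate. Separately, your mechanism for handling the neutral direction $\ell_0$ --- that it ``integrates to a curve of $\varphi^t$-fixed points'' as in the model --- is not available for a general curved structure. The correct input is \cite[Prop.\ 2.8]{cap.me.parabolictrans}: since $\{h^t\}$ is stable (non-expanding), there is a $C^\omega$ section $\hat U$ over a neighbourhood on which $\varphi^t_Y\cdot\hat x\cdot h^{-t}$ stays bounded as $t\to\infty$; boundedness of $\hat W_{\hat x}$ under $\{h^t\}$ then forces $\hat W_{\hat x}\in\n(e_n)$, and a separate weight count kills the $\p_+$-part of $\kappa$. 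No fixed curve is needed or assumed.

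Item (4) in your write-up matches the paper: the two explicit parallel tractors $(0,0,1)$ and $(0,X,\sigma)$ with $d\sigma=X^*$ are exactly the paper's constant density section and $\tilde X=X+\sigma_X$, and your check of total isotropy and transversality to $\mathcal V^0$ via $\Pi^{\mathcal V}$ is fine. Your globalization is a genuinely different route from the paper's: the paper invokes analytic continuation of local parallel sections and local conformal Killing fields (citing \cite{amores.killing}), while you argue via equality of infinitesimal and restricted holonomy for real-analytic connections together with $\pi_1(M)=0$. Both are correct; your version is somewhat more self-contained, while the paper's citation handles the extension of $X$ as a conformal vector field more directly without introducing a holonomy discussion for \v Cap's connection on $\mathcal A$.
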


\begin{proof}
As above, consider the one-parameter subgroup
\[\bigl\{ h^t \bigr\} = {\rm diag}\bigl({\rm e}^{t}, {\rm e}^{t}, 1, \dots, 1, {\rm e}^{-t}, {\rm e}^{-t}\bigr) < P,\]
the isotropy of $\bigl\{ \varphi^t_Y \bigr\}$ with respect to $\hat{x}_0$.
On $\BR^{p,q}$, denote by $\{ e_1, \dots, e_n \}$ the corresponding null-orthonormal frame, where $n=p+q$, in which $\bigl\{h^t \bigr\}$ is
\[{\rm diag}\bigl(1, {\rm e}^{-t}, \dots, {\rm e}^{-t}, {\rm e}^{-2t}\bigr).\]
 In this frame, the Weyl tensor $W_{x_0}$ is a homomorphism $\hat{W}_{\hat{x}_0}\colon \wedge^2 \BR^{p,q} \rightarrow \so(p,q)$, which is $\{ h^t\}$-equivariant. The eigenvalues of $\Ad h^t$ on $\so(p,q)$ are ${\rm e}^t$, $1$, and ${\rm e}^{-t}$, and the eigenspace for ${\rm e}^{-t}$ is precisely the nilpotent subalgebra $\n(e_n)$. It follows that $\hat{W}_{\hat{x}_0} (u,v) \in \n(e_n)$, for all $u,v \in \BR^{p,q}$.

The flow $\bigl\{ h^t \bigr\}$ is \emph{stable} for $t \rightarrow \infty$ in the sense of \cite[Definition~4.2]{frances.degenerescence}. It follows that~$\bigl\{ h^t \bigr\}$ remains a \emph{holonomy} flow for points nearby $x_0$, meaning that there is a neighborhood $U$ of~$x_0$ with a section $\hat{U} \subset \hat{M}$ containing $\hat{x}_0$ such that $\varphi^t_Y. \hat{x} . h^{-t}$ is bounded in $\hat{M}$ as $t \rightarrow \infty$ for all~${\hat{x} \in \hat{U}}$ (see~\cite[Proposition~2.8]{cap.me.parabolictrans}). The section $\hat{U}$ can be taken to be $\exp_{\hat{x}_0}(\g_-)$, restricted to a~sufficiently small neighborhood of $0 \in \g_-$; it is in particular real-analytic.

For any $\hat{x} \in \hat{U}$, the Weyl curvature $\hat{W}_{\hat{x}}$ must remain bounded -- though not necessarily invariant -- under the action of $\bigl\{ h^t \bigr\}$. It follows that $\hat{W}_{\hat{x}} (u,v) \in \n(e_n)$ for all $u,v \in \BR^{p,q}$, $\hat{x} \in \hat{U}$.
Because it is \smash{$\bigl\{ \varphi^t_Y \bigr\}$}-invariant, the full Cartan curvature $\kappa_{\hat{x}}$ is bounded under the action of the holonomy \smash{$\bigl\{ h^t \bigr\}$}. The $\p_+$-component of $\kappa_{\hat{x}}$ is in $\wedge^2 \BR^{p,q*} \otimes \BR^{p,q*}$, which has no stable elements under \smash{$\bigl\{ h^t \bigr\}$}. Consequently, $\kappa_{\hat{x}} = \hat{W}_{\hat{x}}$ for all $\hat{x} \in \hat{U}$.

Let $\mathcal{R}_U$ be the saturation $\hat{U} \cdot J$, for $J \cong J_0 \ltimes J_1 < P$, where $J_0 < {\rm CO}(p,q)$ is the stabilizer of $\BR e_n$ and $J_1 < P^+ \cong \BR^{p,q*}$ is the annihilator of $e_n^\perp$.
This saturation is an analytic reduction of $\hat{M}$ to $J$.
Observe that the Lie algebra{\samepage
\[\mathfrak{j}_1 = \{ \xi \in \p^+ \mid [\xi, \lieg_{-1}] \subset \mathfrak{j}_0 \}.\]
Thus $\g_- + \mathfrak{j}$ is a subalgebra of $\g$.}

Considering $\n(e_n)$ inside $\p$, it is normalized by $J$. Thus on $\mathcal{R}_U$ the Cartan curvature retains the properties of being equal to the Weyl curvature and of having values in $\n(e_n)$. It follows from the formula, for $Y,V \in \mathcal{X}(\hat{M})$,
\[\omega_{\hat{x}}[Y,V] = Y.\omega_{\hat{x}}(V) - V. \omega_{\hat{x}}(Y) - \kappa_{\hat{x}}(\omega_{\hat{x}}(Y),\omega_{\hat{x}}(V)) + [ \omega_{\hat{x}}(Y),\omega_{\hat{x}}(V)] \]
that the distributions $\mathcal{D}' = \omega^{-1}(\g_- + \mathfrak{j})$ and $\mathcal{D} = \omega^{-1}(\g_- + \n(e_n))$ are involutive in restriction to~$\mathcal{R}_U$ (see for example \cite[Section~4]{mp.confdambra}). The integral curves of the $\omega$-constant vector fields in~${\g_- + \mathfrak{j}}$ through $\hat{x}_0$ are contained in $\mathcal{R}_U$, by construction.

We claim that $\mathcal{R}_U$ is an integral submanifold for $\mathcal{D}'$. Let $y, v \in \g_-$, so $\exp_{\hat{x}_0} (tv) \in \hat{U}$ for all sufficiently small $t$. To prove the claim, it suffices to show $\omega((\exp_{\hat{x}_0})_{*v} (y)) \in \g_- + \mathfrak{j}$ for any such~$y$,~$v$, with~$v$ sufficiently small. Indeed, setting $\hat{x} = \exp_{\hat{x}_0} (v)$, once \smash{$T_{\hat{x}} \hat{U} \in \mathcal{D}'_{\hat{x}}$}, then it follows from the axioms for $\omega$ that $T_{\hat{x}} \mathcal{R}_U = \mathcal{D}'_{\hat{x}}$, and $T_{\hat{x}.g} \mathcal{R}_U \subset \mathcal{D}'_{\hat{x}.g}$ for all $g \in J$; moreover, every point of $\mathcal{R}_U$ is of the form $\hat{x}.g$ for such $\hat{x} = \exp_{\hat{x}_0}(v)$ and $g \in J$.

Consider a two-parameter map $\sigma(t,s) = \exp_{\hat{x}_0} (t(v+sy))$, so that
\[ \frac{\partial \sigma}{\partial s} (t,0) = (\exp_{\hat{x}_0})_{*tv}(ty) \qquad \mbox{and} \qquad \omega \left( \frac{\partial \sigma}{\partial t} (t,s) \right) = v + sy.\]
The formula for the Cartan curvature gives rise to the ODE
\[ \frac{\mathrm{d}}{\mathrm{d}t} \omega \left( \frac{\partial \sigma}{\partial s}(t,0) \right) = y - \kappa \left( \omega \left( \frac{\partial \sigma}{\partial s} (t,0) \right), v \right) + \left[ \omega \left( \frac{\partial \sigma}{\partial s}(t,0)\right), v \right].\]
This equation is linear modulo $\g_- + \mathfrak{j}$ in the sense that, if $Y \in \g_- + \mathfrak{j}$, then
\[ y - \kappa (Y, v ) + [Y,v] \equiv 0 \qquad \mbox{mod } \g_- + \mathfrak{j}\]
and satisfies the initial condition
\[\omega \left( \frac{\partial \sigma}{\partial s}(0,0) \right) = 0.\]
Therefore,
\[\omega \left( \frac{\partial \sigma}{\partial s}(t,0) \right) \in \g_- + \mathfrak{j} \qquad \forall t,\]
which gives the desired conclusion when $t=1$.

Now the distribution $\mathcal{D}$ is tangent to $\mathcal{R}_U$ and integrable in restriction to it. Let $\mathcal{L}_0$ be the leaf through $\hat{x}_0$ tangent to $\mathcal{D}$. It is a reduction to $U(e_n)$, the unipotent subgroup of $G_0$ with Lie algebra $\n(e_n)$. This reduction corresponds to a metric $g_0 \in [ g|_U]$ which is Ricci-flat. Define $X \in \mathcal{X}(U)$ by \smash{$X_x = \pi_*\bigl(\omega_{\hat{x}}^{-1}(e_n)\bigr)$}, where $\hat{x}$ is the lift of $x$ to $\hat{U}$. Then $X$ is parallel for the Levi-Civita connection of $g_0$. It is thus a Killing field of $g_0$ and in particular a conformal vector field. We have proved (1)--(3).

With respect to the decomposition (\ref{eqn.conf.splitting}), the subspace $\mathcal{I}_U$ contains constant sections of $ \mathcal{V}_{-1} |_U$, corresponding to metrics on $U$ homothetic to $g_0$, as well as $\tilde{X} = X + \sigma_X$, where $\sigma_X$ is a section of $ \mathcal{V}_{-1} |_U$ with gradient equal $X$. Indeed, these sections are seen to be parallel from formula~(\ref{eqn.conf.connxn}) with respect to the metric $g_0$; moreover, their span is two-dimensional and transverse to~$\mathcal{V}_0$.

When $(M,[g])$ is real-analytic, then any local solution lifts and extends to a global solution on the pullback of $\mathcal{V}$ to the universal cover of ${M}$, and the same holds for local conformal vector fields \cite{amores.killing}. Under the assumption that $M$ is simply connected, any local solutions globalize, as does the local conformal vector field $X$. Then the solutions $\mathcal{I}_U$ extend to global solutions $\mathcal{I} \subset \mathcal{S}$, and the extension of $X$ is everywhere isotropic by analyticity, as it is isotropic on $U$.
\end{proof}

\begin{rem}
The conclusions above hold just as well given a nontrivial balanced linear element in the isotropy at a point -- that is, a flow is not needed.
\end{rem}

Here is where the embedding theorem is applied to the conformal tractor bundle:

\begin{prop}
\label{prop.soln.not.fixed}
Let $(M,[g])$ be as in Proposition~$\ref{prop.balanced.weyl}$ and assume that it is real-analytic and simply connected. Let $\mathcal{I} \subset \mathcal{S}$ be the parallel tractors given by Proposition~$\ref{prop.balanced.weyl}$\,$(4)$.
The $H$-action on $\mathcal{S}$ does not fix any nonzero vectors of $\mathcal{I}$.
\end{prop}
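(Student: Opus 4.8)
The plan is to argue by contradiction: assume some $v\in\mathcal{I}\setminus\{0\}$ is fixed by $H$, hence by the Borel subgroup $B_H$.

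The first step is to see that the balanced element $h^t=\mbox{diag}(e^t,e^t,1,\dots,1,e^{-t},e^{-t})\in\check{B}^{solv}$ controls the $B_H$-action on $\mathcal{S}$, not only on the adjoint tractor bundle. For this I would note that the $\check{B}^{solv}$ fixed in the earlier sections may be taken to arise from applying Theorem \ref{thm.embedding} --- in the form of Corollary \ref{cor.emb.frob} --- to $B_H$ acting on the direct sum $\mathcal{A}\oplus\mathcal{V}$: the representation $\lieg\oplus\BR^{p+1,q+1}$ is completely reducible with $\rho(P)$ algebraic, each summand carries an $\Aut(M,\omega)$-invariant connection, and the entire analysis leading to Proposition \ref{prop.sutoso.roots} and the balanced element used only the $\mathcal{A}$-summand, every $P$-conjugation along the way being permitted by Remark \ref{rem.conjugate.embedding}. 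One then has, at the base point $x_0,\hat{x}_0$, a single algebraic subgroup $\check{B}_{\hat{x}_0}<\rho(P)$ containing $h^t$ together with an algebraic epimorphism $R$ onto $\bar B_d$ satisfying $g.\iota_{\hat{x}_0}(X)=\iota_{\hat{x}_0}(R(g).X)$ for all $X\in\mathcal{S}$, $g\in\check{B}_{\hat{x}_0}$.

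Since the fixed-point set of $\psi(B_H)$ in $\mathcal{S}$ is Zariski closed, $v$ is fixed by the Zariski closure of $\psi(B_H)$, hence by $\bar B_d$, so $R(h^t).v=v$ and therefore $h^t.\iota_{\hat{x}_0}(v)=\iota_{\hat{x}_0}(v)$. Put $w=\iota_{\hat{x}_0}(v)=v(x_0)\in\BR^{p+1,q+1}$: then $w\neq 0$, as $\iota_{\hat{x}_0}$ is injective on parallel sections, and $w$ is fixed by every $h^t$. I would then describe $\mathcal{I}(x_0)=\{X(x_0):X\in\mathcal{I}\}$ using Proposition \ref{prop.balanced.weyl}(4): it is spanned by the value at $x_0$ of the constant section $\tau_0$ of $\mathcal{V}_{-1}$ attached to the Ricci-flat metric $g_0$, which has nonzero image in $\mathcal{V}/\mathcal{V}^0$, and by $\tilde X(x_0)$, whose $\mathcal{V}_0$-component is $X_{x_0}\neq 0$; hence $\mathcal{I}(x_0)$ surjects onto $\mathcal{V}/\mathcal{V}^0$ with kernel the line spanned by the lift of $X_{x_0}$ into $\mathcal{V}_0$. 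Now $h^t$ acts on $\mathcal{V}/\mathcal{V}^0$ by a nontrivial scalar, reciprocal to its action $e^t$ on $\mathcal{V}^1$; and on the lift of $X_{x_0}$ --- where $h^t$ acts in $T_{x_0}M$ by $e^{-2t}$ --- it acts by that value corrected by the conformal-density twist, again a nontrivial scalar for $t\neq 0$. So a $\{h^t\}$-fixed vector of $\mathcal{I}(x_0)$ projects to $0$ in $\mathcal{V}/\mathcal{V}^0$, hence lies on the kernel line, where it is scaled nontrivially, hence vanishes --- contradicting $w\neq 0$.

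The step I expect to be the genuine obstacle is the first paragraph: pinning down that one and the same $\check{B}_{\hat{x}_0}$ carries the balanced element $h^t$ (an output of the algebraic analysis on the $\mathcal{A}$-summand) and intertwines the $B_H$-action on $\mathcal{S}$, and checking that the successive $P$-conjugations used earlier to normalize $\check{B}^{solv}$ leave the part of the embedding over $\mathcal{V}$ intact. The eigenvalue computation in the last paragraph is routine once the common $\check{B}_{\hat{x}_0}$ is available.
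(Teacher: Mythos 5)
Your argument is correct and in essence the same as the paper's. The paper also singles out the point $\hat{x}_0$ and the balanced one-parameter subgroup $\{h^t\}\subset\check{B}^{solv}$, computes that $\iota_{\hat{x}_0}(\mathcal{I})$ is the span of $E_n$ and $E_{n+1}$ (on which $h^t$ acts by the scalar $e^{-t}$), and then invokes the Embedding Theorem relation $g.\iota_{\hat{x}_0}(X)=\iota_{\hat{x}_0}(R(g).X)$ to conclude that $\bar B_d$ acts on $\mathcal{I}$ by a nontrivial dilation and hence fixes no nonzero vector. Your eigenvalue computation via the filtration quotient $\mathcal{V}/\mathcal{V}^0$ and the conformal-density twist on the $\mathcal{V}_0$-component is an equivalent reformulation of the paper's pointwise computation in the null-orthonormal tractor frame.

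The concern you flag in your first paragraph — that $\{h^t\}$ is produced from the embedding into $\text{Mon}(\lieh,\lieg)$ while the intertwining with the $B_H$-action on $\mathcal S$ requires the embedding into $\text{Mon}(\mathcal S,{\bf V})$, and that these two applications of the theorem need a common base point $\hat x_0$ with a common $\check B$ and $R$ — is a real subtlety that the paper's proof of this proposition states tersely rather than spelling out (the introduction announces that the theorem is applied to both bundles \emph{simultaneously}). Your fix, running Theorem~\ref{thm.embedding} once on $\mathcal A\oplus\mathcal V$ with $\lieh\oplus\mathcal S$ as the space of parallel sections and observing that all of the $P$-conjugations used to normalize $\check B^{solv}$ in Section 4.1 are permitted by Remark~\ref{rem.conjugate.embedding}, is the correct way to carry this out, and it is consistent with what the paper intends. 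One small remark for completeness: the discompact radical $\bar B_d$ for the direct-sum application projects onto the discompact radicals for each summand, which is what ensures the algebraic analysis of Sections 4.1--4.5 (which only uses the $\mathcal A$-summand) carries over unchanged. Modulo this bookkeeping, your proof is a faithful and slightly more careful version of the paper's.
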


\begin{proof}
Let $\hat{x}_0$ be as above, a point at which
\[\bigl\{ h^t = {\rm diag}\bigl({\rm e}^t, {\rm e}^{t}, 1, \dots, 1, {\rm e}^{-t},{\rm e}^{-t}\bigr) \bigr\} < \check{B}^{\rm solv},\]
This point was given by the embedding theorem; it is such that the linear injection $\iota\colon \mathcal{S} \rightarrow {\bf V}$ is the composition of evaluation at $x_0$ with the trivialization $\iota_{\hat{x}_0}$ of $\mathcal{V}_{x_0}$ corresponding to $\hat{x}_0 \in \hat{M}$.

Now $\hat{x}_0$ belongs to the reduction of $\hat{M}$ to $U(e_n)$ constructed in the proof of Proposition~\ref{prop.balanced.weyl} above, corresponding to the Ricci-flat metric $g_0$. The trivialization $\iota_{\hat{x}_0}\colon \mathcal{V}_{x_0} \rightarrow {\bf V}$ therefore identifies the subspaces $(\mathcal{V}_i)_{x_0}$ of the decomposition determined by $g_0$ with ${\bf V}_i$ for $i=1,0,-1$.
The constant sections of $\mathcal{V}_{-1}$, belonging to $\mathcal{I}$, corresponding to metrics homothetic to $g_0$, therefore evaluate under $\iota_{\hat{x}_0}$ to elements of ${\bf V}_{-1} = \BR E_{n+1}$.

The point $\hat{x}_0$ also, by the construction of the isotropic vector field $X$ in the proof of Proposition~\ref{prop.balanced.weyl}\,(2) above, corresponds to a frame of $T_{x_0} M$ in which $X_{x_0}$ evaluates to $e_n \in \BR^{1,n-1} = {\bf V}_0$.
The solution $X + \sigma_X \in \mathcal{I}$ therefore evaluates under $\iota$ to a vector of the form $E_n + b E_{n+1} \in {\bf V}_0 + {\bf V}_{-1}$.
The image $\iota(\mathcal{I})$ is therefore the span of $E_n$ and $E_{n+1}$ in ${\bf V}$. On this subspace, $h^t$~acts as a dilation by ${\rm e}^{-t}$, and has no nonzero fixed vectors.

By the embedding theorem, there is a one-parameter subgroup of $B_H$, the image under~$R$ of~\smash{$\bigl\{ h^t \bigr\}$}, which acts on $\mathcal{I}$ as a dilation by ${\rm e}^{-t}$, in particular nontrivially. (Note that \smash{$\bigl\{ \varphi^t_Y \bigr\}$} may or may not belong to $B_H$; the embedding theorem says in any case that there is a one-parameter subgroup of $B_H$ with representation on $\mathcal{S}$ corresponding to \smash{$\bigl\{ h^t \bigr\}$}.)
\end{proof}

\subsection{Conclusion}

 Let $0 \neq \chi \in \mathcal{I}$. The orbit $H.\chi \subset \mathcal{S}$ spans an irreducible $H$-representation $\mathcal{S'}$,
 which has dimension at least $p'+q' > 2 p' = p+1$, so $\mathcal{S'}$ cannot be a totally isotropic subspace for the tractor form on $\mathcal{S}$. Because the $H$-representation on $\mathcal{S}'$ is irreducible and preserves the scalar product on the tractor bundle, $\mathcal{S}'$ is necessarily a nondegenerate subspace of $\mathcal{S}$. The representation therefore corresponds to a homomorphism, which we may assume is virtually faithful by Proposition~\ref{prop.soln.not.fixed}, $I \colon H \rightarrow L = \SO(\ell, m)$, where $\ell \leq \mbox{min}\{ m, p+1 \}$ and $m \leq q+1$.

Now $H$ acts conformally on $\mbox{\bf M\"ob}^{\ell-1, m-1}$. By Theorem~\ref{thm.embedding} and amenability of $B_H$, there is
an embedding of $(\lieh, B_H)$ into $(\mathfrak{o}(\ell, m), N)$, where $N$ is the stabilizer of an isotropic line of $\mathcal{S}'$, for which we can take $\check{B} = \overline{I(B_H)}_d < N$.
By Proposition~\ref{prop.sutoso.roots}, $\ell \geq 2 p' = p+1$, therefore $\ell = p+1$.

The conformal tractor curvature $R^\omega$ annihilates $\mathcal{S}$, and in particular $\mathcal{S}'$, a nondegenerate subspace for the tractor scalar product of maximal index. The conformal tractor curvature has values $R^\omega_{\hat{x}}(u,v)$ in the subalgebra of $\g \cong \so(p+1,q+1)$ annihilating $\iota_{\hat{x}} (\mathcal{S}')$, for all $\hat{x} \in \hat{M}$ and~${u,v \in T_x M}$, and this subalgebra is compact.
 Now consider the metric $g_0$ on the neighborhood~$U$ from Proposition~\ref{prop.balanced.weyl}, and the expression (\ref{eqn.conf.curv}) for $R^\omega$ with respect to $g_0$. In this expression, according to the decomposition (\ref{eqn.conf.splitting}), $R^\omega$ annihilates ${\bf V}_{1}$ and ${\bf V}_{-1}$ and restricts on~${\bf V}_0$ to the Weyl tensor $W$. The Weyl tensor therefore has values in a compact subalgebra of $\so({\bf V}_0) \cong \so(p,q)$ annihilating a nondegenerate subspace of maximal index.
On the other hand, by Proposition~\ref{prop.balanced.weyl}, the Weyl tensor values $W_x$ belong to a nilpotent subalgebra $\n(X_x)$ for any~${x \in U}$.
Since the intersection of such a nilpotent subalgebra with any compact subalgebra is trivial, we conclude that $W$ vanishes on $U$, and by analyticity, that $W$ vanishes on all of $M$. Thus $(M,[g])$ is conformally flat.

Conformal flatness means that there is a conformal local diffeomorphism, a \emph{developing map}, from the universal cover of $M$, in this case $M$ itself, to $\mbox{\bf M\"ob}^{p,q}$. Since $M$ is also assumed closed, this local diffeomorphism is a covering map. The global conclusion follows, and Theorem~\ref{thm.conf.application} is proved.

\subsection*{Acknowledgements}
We thank the anonymous referees for their comments and suggestions, which significantly helped us to improve presentation of our results.
Some of the work on this paper was carried out during the program ``Twistor theory'', which was partially supported by the EPSRC grant EP/Z000580/1, at the Isaac Newton Institute for Mathematical Sciences, whom we thank for their hospitality. The first author was partially supported during work on this project by the US National Science Foundation award DMS-2109347. The first author affirms her commitment to diversity, equity, and inclusion in mathematics and all sciences and thanks the US National Science Foundation for its support, which has been instrumental at nearly every stage of her career. She salutes the dedicated staff of the Division of Mathematical Sciences in particular. The second author acknowledges support by the grants GA22-00091S from Czech Science Foundation (GA\v CR) and MUNI/R/1435/2024 from the grant agency of Masaryk University (GAMU), as well as by the COST Action CaLISTA CA21109 (\url{https://www.cost.eu}).

\pdfbookmark[1]{References}{ref}
\LastPageEnding


\begin{thebibliography}{99}
\footnotesize\itemsep=-0.5pt

\bibitem{alekseevski.selfsim}
Alekseevski D., Self-similar {L}orentzian manifolds,
 \href{https://doi.org/10.1007/BF00054491}{\textit{Ann. Global Anal. Geom.}}
 \textbf{3} (1985), 59--84.

\bibitem{amores.killing}
Amores A.M., Vector fields of a~finite type {$G$}-structure,
 \href{https://doi.org/10.4310/jdg/1214434847}{\textit{J.~Differential
 Geometry}} \textbf{14} (1979), 1--6.

\bibitem{bfm.zimemb}
Bader U., Frances C., Melnick K., An~embedding theorem for automorphism groups
 of {C}artan geometries,
 \href{https://doi.org/10.1007/s00039-009-0002-x}{\textit{Geom. Funct. Anal.}}
 \textbf{19} (2009), 333--355,
 \href{http://arxiv.org/abs/0709.3844}{arXiv:0709.3844}.

\bibitem{bn.simpleconf}
Bader U., Nevo A., Conformal actions of simple {L}ie groups on compact
 pseudo-{R}iemannian manifolds,
 \href{https://doi.org/10.4310/jdg/1090351122}{\textit{J.~Differential Geom.}}
 \textbf{60} (2002), 355--387.

\bibitem{BFGK}
Baum H., Friedrich T., Grunewald R., Kath I., Twistors and {K}illing spinors on
 {R}iemannian manifolds, \textit{Teubner-Texte Math.}, Vol.~124, B.G.~Teubner
 Verlagsgesellschaft mbH, Stuttgart, 1991.

\bibitem{CD.BGG}
Calderbank D.M.J., Diemer T., Differential invariants and curved
 {B}ernstein--{G}elfand--{G}elfand sequences,
 \href{https://doi.org/10.1515/crll.2001.059}{\textit{J.~Reine Angew. Math.}}
 \textbf{537} (2001), 67--103,
 \href{http://arxiv.org/abs/math.DG/0001158}{arXiv:math.DG/0001158}.

\bibitem{cemn.cproj}
Calderbank D.M.J., Eastwood M.G., Matveev V.S., Neusser K., C-projective
 geometry, \href{https://doi.org/10.1090/memo/1299}{\textit{Mem. Amer. Math.
 Soc.}} \textbf{267} (2020), v+137~pages,
 \href{http://arxiv.org/abs/1512.04516}{arXiv:1512.04516}.

\bibitem{cap.deformations}
{\v{C}}ap A., Infinitesimal automorphisms and deformations of parabolic
 geometries, \href{https://doi.org/10.4171/JEMS/116}{\textit{J.~Eur. Math.
 Soc.}} \textbf{10} (2008), 415--437,
 \href{http://arxiv.org/abs/math.DG/0508535}{arXiv:math.DG/0508535}.

\bibitem{cap.gover.tractor.calc}
{\v{C}}ap A., Gover A.R., Tractor calculi for parabolic geometries,
 \href{https://doi.org/10.1090/S0002-9947-01-02909-9}{\textit{Trans. Amer.
 Math. Soc.}} \textbf{354} (2002), 1511--1548.

\bibitem{cap.me.parabolictrans}
{\v{C}}ap A., Melnick K., Essential {K}illing fields of parabolic geometries,
 \href{https://doi.org/10.1512/iumj.2013.62.5166}{\textit{Indiana Univ.
 Math.~J.}} \textbf{62} (2013), 1917--1953,
 \href{http://arxiv.org/abs/1208.5510}{arXiv:1208.5510}.

\bibitem{cap.schichl.equiv}
{\v{C}}ap A., Schichl H., Parabolic geometries and canonical {C}artan
 connections,
 \href{https://doi.org/10.14492/hokmj/1350912986}{\textit{Hokkaido Math.~J.}}
 \textbf{29} (2000), 453--505.

\bibitem{cap.slovak.book.vol1}
{\v{C}}ap A., Slov\'ak J., Parabolic geometries.~{I}. {B}ackground and general
 theory, \textit{Math. Surveys Monogr.}, Vol.~154,
 \href{https://doi.org/10.1090/surv/154}{American Mathematical Society},
 Providence, RI, 2009.

\bibitem{css.bgg.seqs}
{\v{C}}ap A., Slov\'ak J., Sou\v{c}ek V., Bernstein--{G}elfand--{G}elfand
 sequences, \href{https://doi.org/10.2307/3062111}{\textit{Ann. of Math.}}
 \textbf{154} (2001), 97--113,
 \href{http://arxiv.org/abs/math.DG/0001164}{arXiv:math.DG/0001164}.

\bibitem{curry.gover.tractors}
Curry S.N., Gover A.R., An~introduction to conformal geometry and tractor
 calculus, with a~view to applications in general relativity, in Asymptotic
 Analysis in General Relativity, \textit{London Math. Soc. Lecture Note Ser.},
 Vol.~443, Cambridge University Press, Cambridge, 2018, 86--170,
 \href{http://arxiv.org/abs/1412.7559}{arXiv:1412.7559}.

\bibitem{dani.borel.density}
Dani S.G., On ergodic quasi-invariant measures of group automorphism,
 \href{https://doi.org/10.1007/BF02761685}{\textit{Israel~J. Math.}}
 \textbf{43} (1982), 62--74.

\bibitem{dfmpz.balanced}
Dumitrescu S., Frances C., Melnick K., Pecastaing V., Zeghib A., A~local
 {L}orentzian {F}errand--{O}bata theorem for conformal vector fields,
 \href{http://arxiv.org/abs/2511.03713}{arXiv:2511.03713}.

\bibitem{eastwood.matveev.proj.tractor}
Eastwood M., Matveev V., Metric connections in projective differential
 geometry, in Symmetries and Overdetermined Systems of Partial Differential
 Equations, \textit{IMA Vol. Math. Appl.}, Vol.~144,
 \href{https://doi.org/10.1007/978-0-387-73831-4_16}{Springer}, New York,
 2008, 339--350, \href{http://arxiv.org/abs/0806.3998}{arXiv:0806.3998}.

\bibitem{frances.degenerescence}
Frances C., D\'eg\'enerescence locale des transformations conformes
 pseudo-riemanniennes, \href{https://doi.org/10.5802/aif.2732}{\textit{Ann.
 Inst. Fourier (Grenoble)}} \textbf{62} (2012), 1627--1669,
 \href{http://arxiv.org/abs/1008.2436}{arXiv:1008.2436}.

\bibitem{fz.simpleconf}
Frances C., Zeghib A., Some remarks on conformal pseudo-{R}iemannian actions of
 simple {L}ie groups,
 \href{https://doi.org/10.4310/MRL.2005.v12.n1.a5}{\textit{Math. Res. Lett.}}
 \textbf{12} (2005), 49--56.

\bibitem{GoverSilhan}
Gover A.R., \v{S}ilhan J., The conformal {K}illing equation on
 forms---prolongations and applications,
 \href{https://doi.org/10.1016/j.difgeo.2007.11.014}{\textit{Differential
 Geom. Appl.}} \textbf{26} (2008), 244--266,
 \href{http://arxiv.org/abs/math.DG/0601751}{arXiv:math.DG/0601751}.

\bibitem{gromov.rgs}
Gromov M., Rigid transformations groups, in G\'eom\'etrie Diff\'erentielle
 ({P}aris, 1986), \textit{Travaux en Cours}, Vol.~33, Hermann, Paris, 1988,
 65--139.

\bibitem{hsss.prol.connxn}
Hammerl M., Somberg P., Sou\v{c}ek V., \v{S}ilhan J., On a~new normalization
 for tractor covariant derivatives,
 \href{https://doi.org/10.4171/JEMS/349}{\textit{J.~Eur. Math. Soc.}}
 \textbf{14} (2012), 1859--1883,
 \href{http://arxiv.org/abs/1003.6090}{arXiv:1003.6090}.

\bibitem{kobayashi.transf}
Kobayashi S., Transformation groups in differential geometry,
 \textit{Classics Math.},
 \href{https://doi.org/10.1007/978-3-642-61981-6}{Springer}, Berlin,
 1995.

\bibitem{kostant.harmonic.curv}
Kostant B., Lie algebra cohomology and the generalized {B}orel--{W}eil theorem,
 \href{https://doi.org/10.2307/1970237}{\textit{Ann. of Math.}} \textbf{74}
 (1961), 329--387.

\bibitem{me.frobenius}
Melnick K., A~{F}robenius theorem for {C}artan geometries, with applications,
 \href{https://doi.org/10.4171/LEM/57-1-3}{\textit{Enseign. Math.}}
 \textbf{57} (2011), 57--89,
 \href{http://arxiv.org/abs/0812.0624}{arXiv:0812.0624}.

\bibitem{me.notices}
Melnick K., Rigidity of transformation groups in differential geometry,
 \href{https://doi.org/10.1090/noti2279}{\textit{Notices Amer. Math. Soc.}}
 \textbf{68} (2021), 721--732,
 \href{http://arxiv.org/abs/2009.13937}{arXiv:2009.13937}.

\bibitem{mp.confdambra}
Melnick K., Pecastaing V., The conformal group of a~compact simply connected
 {L}orentzian manifold,
 \href{https://doi.org/10.1090/jams/976}{\textit{J.~Amer. Math. Soc.}}
 \textbf{35} (2022), 81--122,
 \href{http://arxiv.org/abs/1911.06251}{arXiv:1911.06251}.

\bibitem{morimoto}
Morimoto T., Geometric structures on filtered manifolds,
 \href{https://doi.org/10.14492/hokmj/1381413178}{\textit{Hokkaido Math.~J.}}
 \textbf{22} (1993), 263--347.

\bibitem{morris.ratners.thms.book}
Morris D.W., Ratner's theorems on unipotent flows, \textit{Chicago Lectures in Math.},
 University of Chicago Press, Chicago, IL, 2005.

\bibitem{neuss.prolong}
Neusser K., Prolongation on regular infinitesimal flag manifolds,
 \href{https://doi.org/10.1142/S0129167X11007501}{\textit{Internat.~J. Math.}}
 \textbf{23} (2012), 1250007, 41~pages,
 \href{http://arxiv.org/abs/1012.1686}{arXiv:1012.1686}.

\bibitem{pecastaing.frobenius}
Pecastaing V., On two theorems about local automorphisms of geometric
 structures, \href{https://doi.org/10.5802/aif.3009}{\textit{Ann. Inst.
 Fourier (Grenoble)}} \textbf{66} (2016), 175--208,
 \href{http://arxiv.org/abs/1402.5048}{arXiv:1402.5048}.

\bibitem{pecastaing.smooth.sl2r}
Pecastaing V., Lorentzian manifolds with a~conformal action of~{${\rm
 SL}(2,{\bf R})$}, \href{https://doi.org/10.4171/CMH/439}{\textit{Comment.
 Math. Helv.}} \textbf{93} (2018), 401--439.

\bibitem{pecastaing.rank1}
Pecastaing V., Conformal actions of real-rank~1 simple {L}ie groups on
 pseudo-{R}iemannian manifolds,
 \href{https://doi.org/10.1007/s00031-019-09527-6}{\textit{Transform. Groups}}
 \textbf{24} (2019), 1213--1239.

\bibitem{rosenlicht.brasil}
Rosenlicht M., A~remark on quotient spaces, \textit{An. Acad. Brasil. Ci.}
 \textbf{35} (1963), 487--489.

\bibitem{Semmelmann}
Semmelmann U., Conformal {K}illing forms on {R}iemannian manifolds,
 {H}abilitation Thesis, Maximilian Ludwig Universit\"at in M\"unchen, 2001.

\bibitem{shalom.discompact}
Shalom Y., Invariant measures for algebraic actions, {Z}ariski dense subgroups
 and {K}azhdan's property~({T}),
 \href{https://doi.org/10.1090/S0002-9947-99-02363-6}{\textit{Trans. Amer.
 Math. Soc.}} \textbf{351} (1999), 3387--3412.

\bibitem{sharpe}
Sharpe R.W., Differential geometry. {C}artan's generalization of {K}lein's
 {E}rlangen program, \textit{Grad. Texts in Math.}, Vol.~166, Springer, New
 York, 1997.

\bibitem{tanaka}
Tanaka N., On the equivalence problems associated with simple graded {L}ie
 algebras, \href{https://doi.org/10.14492/hokmj/1381758416}{\textit{Hokkaido
 Math.~J.}} \textbf{8} (1979), 23--84.

\bibitem{thurston.reeb.stability}
Thurston W.P., A~generalization of the {R}eeb stability theorem,
 \href{https://doi.org/10.1016/0040-9383(74)90025-1}{\textit{Topology}}
 \textbf{13} (1974), 347--352.

\bibitem{zimmer.etsg}
Zimmer R.J., Ergodic theory and semisimple groups, \textit{Monogr. Math.},
 Vol.~81, \href{https://doi.org/10.1007/978-1-4684-9488-4}{Birkh\"auser},
 Basel, 1984.

\bibitem{zimmer.lorentz}
Zimmer R.J., On the automorphism group of a~compact {L}orentz manifold and
 other geometric manifolds,
 \href{https://doi.org/10.1007/BF01394415}{\textit{Invent. Math.}} \textbf{83}
 (1986), 411--424.

\bibitem{zimmer.rankbounds}
Zimmer R.J., Split rank and semisimple automorphism groups of {$G$}-structures,
 \href{https://doi.org/10.4310/jdg/1214441180}{\textit{J.~Differential Geom.}}
 \textbf{26} (1987), 169--173.

\bibitem{zimmer.gromovrep}
Zimmer R.J., Automorphism groups and fundamental groups of geometric manifolds,
 in Differential Geometry: {R}iemannian Geometry ({L}os {A}ngeles, {CA},
 1990), \textit{Proc. Sympos. Pure Math.}, Vol.~54,
 \href{https://doi.org/10.1090/pspum/054.3/1216656}{American Mathematical
 Society}, Providence, RI, 1993, 693--710.

\end{thebibliography}
\end{document}